\newtheorem{proposition}{Proposition}[section]
\newtheorem{theorem}[proposition]{Theorem}
\newtheorem{lemma}[proposition]{Lemma}
\newcommand{\RNum}[1]{\uppercase\expandafter{\romannumeral #1\relax}}
\begin{document}

\title[Reconstruct Acoustic Sources in Time Domain]{A novel sampling method for time domain acoustic inverse source problems}

\author{Jiaru Wang \& Bo Chen* \& Qingqing Yu \& Yao Sun*}

\address{College of science, Civil Aviation University of China, Tianjin, China}
\ead{Bo Chen: charliecb@163.com\\
~~~~~~~Yao Sun: sunyao10@mails.jlu.edu.cn}
\vspace{10pt}
\begin{indented}
\item[]July 2023
\end{indented}

\begin{abstract}
This paper is concerned with the inverse acoustic scattering problems of reconstructing time-dependent multiple point sources and sources on a curve $L$ of the form $\lambda(t)\tau(x)\delta_L(x)$. A direct sampling method with a novel indicator function is proposed to reconstruct the sources. The sampling method is easy to implement and low in calculation cost. Based on the sampling method, numerical algorithms are provided to get the positions and intensities of the sources. Both theoretical analysis of the indicator function and numerical experiments are provided to show the effectiveness of the proposed method.

\noindent\textbf{Keywords:}  time domain, inverse source problem, wave equation, sampling method

\end{abstract}

%
%
%
%
%

\section{Introduction}

The inverse source problems of acoustic waves in the time domain have been widely used in radar detection, gesture recognition, and many other areas of science and engineering. Two typical models of acoustic scattering problems are frequency domain problems and time domain problems. For a long time, people paid more attention to the frequency domain problems \cite{arens2003linear,colton1998inverse,sun2016reciprocity,zhang2013novel}. However, time-dependent data and multi-frequency wave fields have great value in practical applications, and a series of achievements have been published in recent years \cite{chen2017time,guo2016time,sayas2011retarded}.

The reconstruction of point sources in the frequency domain has been well-studied in the past decades \cite{bao2010multi,ji2020identification}. The uniqueness and local stability analysis of multiple point sources for the time-harmonic case are considered in \cite{el2011inverse}. Newton's iteration method to solve the nonlinear equation of the point sources for an inverse source problem of time-harmonic acoustic waves can be seen in \cite{alves2009iterative}. Reconstruction of unknown sources of the acoustic field with multi-frequency measurement data by a recursive algorithm is studied in \cite{bao2015recursive}, and a series of analyses of other multi-frequency inverse source problems are shown in \cite{li2016increasing,zhang2015fourier}. Determining an obstacle and the corresponding incident point sources in the Helmholtz equation from near-field data can be seen in \cite{chang2021simultaneous}.

The reconstruction of point sources in the time domain has received more and more attention in recent years. An inverse source problem with a moving point source is studied in \cite{wang2017mathematical} and a sampling method is provided. In \cite{chen2020numerical}, the authors propose a modified method of fundamental solutions (MMFS) based on the time convolution of the Green's function and the signal function to reconstruct multiple point sources. The inverse source problem in nonhomogeneous background media is analyzed in \cite{devaney2007inverse}.

In comparison with the extensive studies on inverse source problems with point sources, as far as we know, the identifications of other kinds of sources are relatively rare. In \cite{de2015inverse}, the authors provide an analysis of inverse source problems with the source terms that are delta-like in time and of limited oscillation in space. In \cite{2003An}, an inverse source problem with the source term $q(t)\delta(\partial G)$ is considered. Reconstruction of point sources and extended sources at sparse sensors can be seen in \cite{ji2021reconstruction}. In \cite{lan48method}, the authors propose a numerical method for the inverse heat source problem with a line source.

Among the various methods, the sampling method plays an important role in the numerical calculation of the inverse source problems, such as the linear sampling method \cite{guo2013toward,li2008multilevel} and the direct sampling method \cite{liu2017novel,zhang2018locating}.

In this paper, a direct sampling method with a novel indicator function is proposed to reconstruct time-dependent acoustic multiple-point sources and sources of the form $\lambda(t)\tau(x)\delta_L(x)$, where $\lambda(t)$ is the signal function, $\tau(x)$ is the intensity function of the sources on the curve $L$, and
$$\delta_L(x)=\int_{L}\delta(x-y){\rm d}s(y),$$
in which $\delta(\cdot)$ is the Dirac delta distribution. The indicator function is constructed based on the time convolution of the Green's function and the signal function. In \cite{chen2020numerical}, the MMFS is proposed to reconstruct multiple point sources, for which equations have to be solved to obtain the reconstruction of the sources. Nevertheless, the direct sampling method proposed in this paper provides a reconstruction scheme of the sources, either multiple point sources or the sources on a curve $L$, without solving any equations, and the numerical implementation of the algorithm is simpler.

The outline of this paper is as follows. A brief introduction to the inverse source problems is provided in Section 2. A novel direct sampling method to reconstruct multiple point sources and sources of the form $\lambda(t)\tau(x)\delta_L(x)$ is proposed in Section 3 and the theoretical feasibility of the method is proved. In Section 4, numerical experiments are provided to verify the effectiveness of the proposed method. The concluding remarks are provided in Section 5.

\section{Problem setting}

The Green's function for the d'Alembert operator $c^{-2}\partial_{tt}-\triangle$ is
\begin{equation*}
G(x,t;s)=\frac{\delta(t-c^{-1}|x-s|)}{4\pi|x-s|},
\end{equation*}
where $c>0$ denotes the sound speed of the homogeneous background medium, $\displaystyle{\partial_{tt}u=\frac{\partial^2{u}}{\partial{t^2}}}$ and $\triangle$ is the Laplacian in $\mathbb{R}^3$.

The method of fundamental solutions expands the solution of the scattering problem utilizing the Green's function. However, since the Green's function for the d'Alembert operator involves the Dirac delta distribution, we usually use
\begin{equation*}
G*\lambda(x,t;s)=G(x,t;s)*\lambda(t)=\frac{\lambda(t-c^{-1}|x-s|)}{4\pi|x-s|},
\end{equation*}
the time convolution of the Green's function and the signal function, to expand the solution of the time domain acoustic scattering problem.

\subsection{The case of multiple point sources}

Let $\Omega\subset\mathbb{R}^3$ be a bounded convex open region and $u(x,t)$ be the wave field generated by multiple point sources which satisfies the wave equation
\begin{equation}\label{wave equation}
c^{-2}\partial_{tt}u(x,t)-\triangle{u(x,t)}=\sum_{j=1}^{K} \lambda(t)\tau_{j}\delta(x-s_j),\quad x\in\mathbb{R}^3,\,t\in\mathbb{R},
\end{equation}
where $K\in\mathbb{N}^*$ is a positive integer, $s_j\in\Omega$ are the locations of the source points, and~$\tau_j>0$ are the intensities of the sources. The source points $s_j$ are assumed to be mutually distinct.

The signal function $\lambda(t)$ is assumed to be causal, which means the source term $\displaystyle{\sum_{j=1}^{K} \lambda(t)\tau_{j}\delta(x-s_j)}$ vanishes for $t<0$ and the initial condition
\begin{equation*}
u(\cdot,0)=\partial_{t}u(\cdot,0)=0 \quad \textup{in} \; {\mathbb{R}^3}
\end{equation*}
is a direct conclusion of the causality.

The forward scattering problem is to solve (\ref{wave equation}) for the wave field $u$ with the known source term. The solution to the forward scattering problem can be expressed as
\begin{equation}\label{point solution}
u(x,t)=\sum_{j=1}^{K}\tau_jG(x,t;s_j)\ast\lambda(t),\quad x\in\mathbb{R}^3,\,t\in\mathbb{R}.
\end{equation}


The inverse source problem (P1) under consideration is: Reconstruct the locations and intensities of the point sources in (\ref{wave equation}) utilizing the measured wave field data
\begin{equation}\label{field data}
u(x,t),\quad x\in\partial\Omega,\,t\in\mathbb{R},
\end{equation}
where $\partial \Omega$ is the boundary of $\Omega$.

\subsection{The case of sources on a curve}

Let $u(x,t)$ be the wave field generated by the sources located on a curve $L$ which satisfies the wave equation
\begin{equation}\label{wave equation2}
c^{-2}\partial_{tt}u(x,t)-\triangle{u(x,t)}= \lambda(t)\tau(x)\delta_L(x),\quad x\in\mathbb{R}^3,\,t\in\mathbb{R},
\end{equation}
where $\tau(x)$ is the intensity function of the sources and $\displaystyle{\delta_L(x)=\int_{L}\delta(x-y){\rm d}s(y)}$. Then we have
\begin{equation*}
 \delta_L(x)=
 \left\{
 \begin{array}{ll}
 +\infty, &x\in L, \\
 0, &\textup{otherwise}
 \end{array}
 \right.
 \end{equation*}
and $\displaystyle{\int_{\mathbb{R}^3}\delta_L(x){\rm d}x=l}$, in which $l$ is the length of $L$.

The initial condition is still a direct conclusion of the causality. The forward scattering problem is to solve (\ref{wave equation2}) for the wave field $u$ with the known source term  $\lambda(t)\tau(x)\delta_L(x)$. The solution to the forward problem can be expressed as
\begin{equation}\label{line solution}
u(x,t)=\displaystyle{\int_{L}\tau(y)G(x,t;y)\ast\lambda(t){\rm d}s(y),\quad x\in\mathbb{R}^3,\,t\in\mathbb{R}}.
\end{equation}

The inverse source problem (P2) under consideration is: Reconstruct the locations of the sources in (\ref{wave equation2}) from the measured wave field data (\ref{field data}).

\section{The direct sampling method}

The direct sampling methods are independent of the prior information of the unknown sources and are easy to implement. In this section, we are going to establish a direct sampling method to reconstruct multiple point sources and sources on a curve.

Let $D\subset\Omega$ be a bounded sampling region that contains the sources in it. That is, $S\subset D$ for the case of point sources, where $S:=\left\{s_j,\,j=1,2,\ldots,K\right\}$ is the collection of the source points, and $L\subset D$ for the case of sources on the curve $L$.

For the sampling point $z\in D,$ define the indicator function
\begin{equation}\label{indicator function}
I(z)=\displaystyle{\int_{\mathbb{R}}\int_{\partial\Omega}u\left(x,t\right) \Big(G(x,t;z)\ast{\lambda(t)}\Big){\rm d}s(x){\rm d}t}.
\end{equation}
The direct sampling method is based on the properties of the indicator function $I(z)$, which will be proved for both the case of multiple point sources and the case of sources on a curve.

\subsection{The reconstruction of multiple point sources}
\begin{figure}[!ht]
 \center
 \scriptsize
 \begin{tabular}{cc}
  \includegraphics[width=4.5cm]{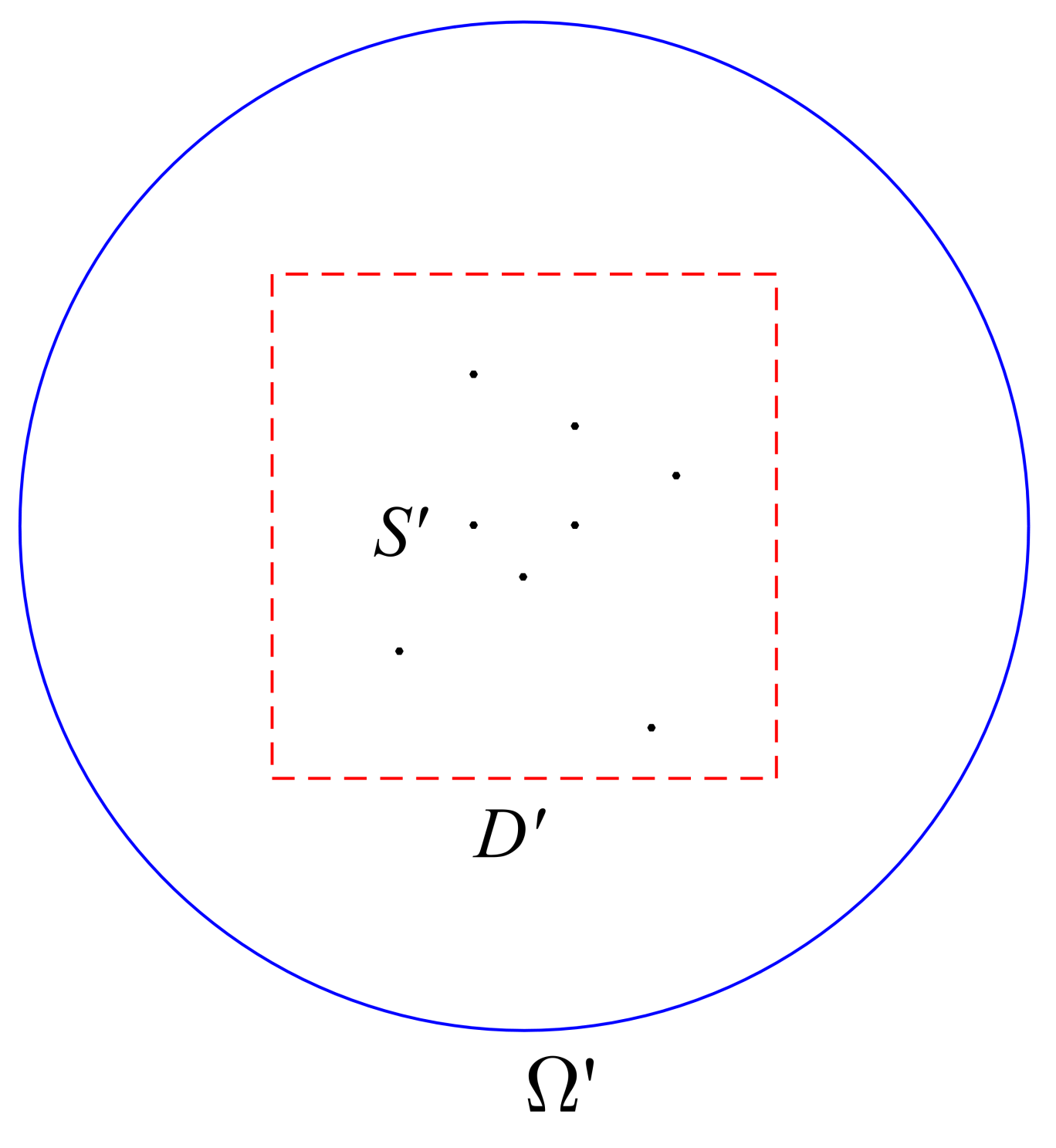}~~~ & ~~~ \includegraphics[width=4.5cm]{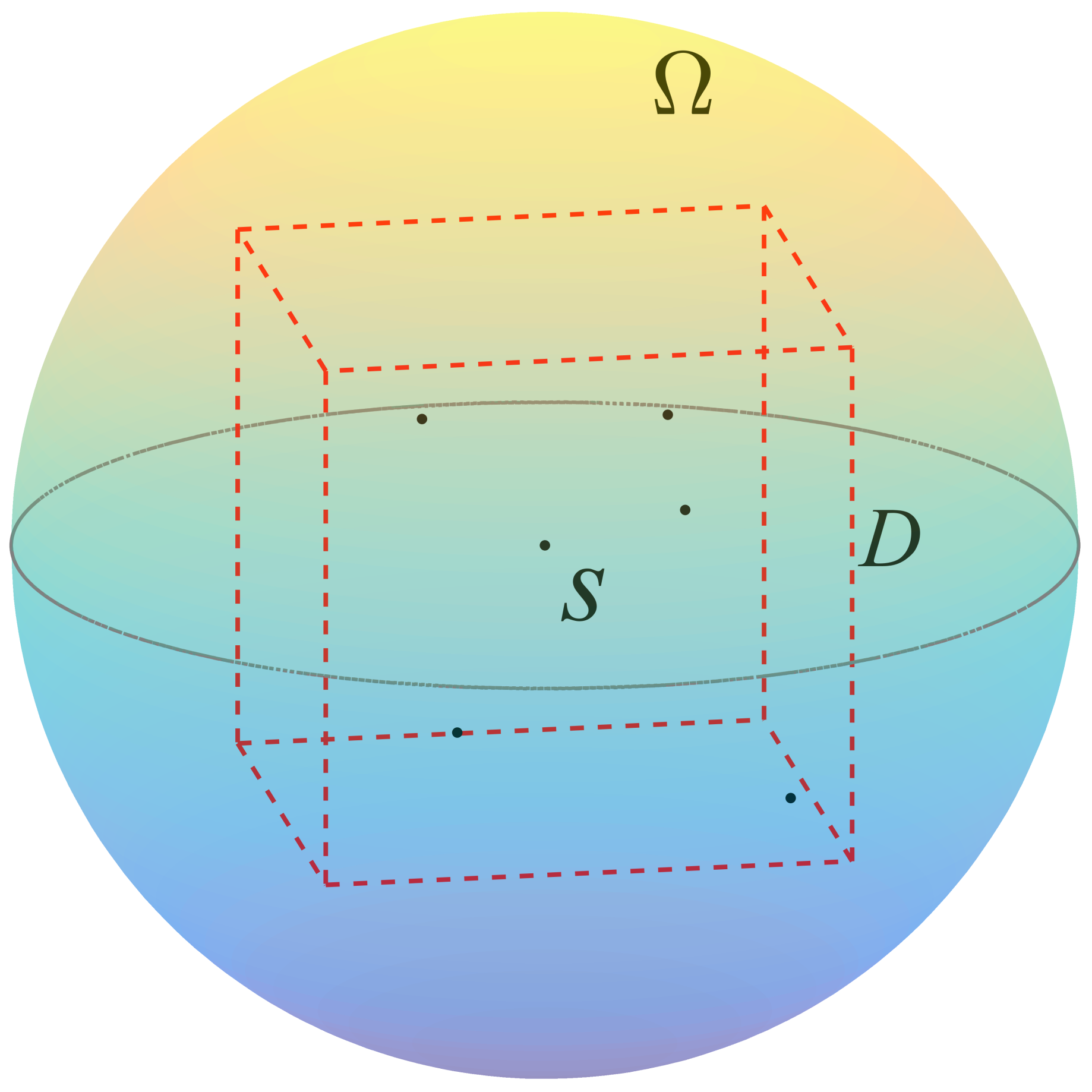} \\
  (a) & (b)  \\
 \end{tabular}
 \caption{Sketch of the measurement curve/surface, sampling region and source points. (a) A two-dimensional cross section. (b) The three-dimensional diagram.}
\end{figure}

We start with the proof of a reduced problem on a two-dimensional cross section. The reduced problem is easier to analyze and the corresponding numerical experiments are more intuitive.

Assume that the collection of source points $S'=\{s_j, 1\leq j\leq K\}$ is on a plane $\mathbb{P}\subset\mathbb{R}^3$. Based on this a priori information, let $\Omega'\subset \mathbb{P}$ be a bounded convex open region such that $S'\subset \Omega'$. Furthermore, we can choose a sampling region $D'\subset \Omega'$ such that $\partial\Omega'\cap D'=\varnothing$ and $S'\subset D'$.

Then the problem is reduced to: Determine the location of the source points on the two-dimensional cross section $\mathbb{P}$ with the indicator function
\begin{equation}\label{indicator function2}
I'(z)=\displaystyle{\int_{\mathbb{R}}\int_{\partial\Omega'}u\left(x,t\right) \Big(G(x,t;z)\ast{\lambda(t)}\Big){\rm d}s(x){\rm d}t}.
\end{equation}

\begin{lemma}
Assume that $\Omega'$ is a circle with radius $R$ on a two-dimensional cross section $\mathbb{P}\in\mathbb{R}^3$, $s$ and $z$ are two different points in $\Omega'$ and $c>0$ is the sound speed. Then we have
\begin{equation*}
0<F(z,s):=\displaystyle{\int_{\partial\Omega'}\delta \left(c^{-1}|x-s|-c^{-1}|x-z|\right){\rm d}s(x)}< +\infty
\end{equation*}
and
\begin{equation*}
\lim\limits_{z\rightarrow s}F(z,s)=+\infty.
\end{equation*}
Moreover, for any point $s\in\Omega'$, there exists a constant $A_1(s)>0$ such that
\begin{equation*}
\lim\limits_{z\rightarrow s}|z-s|F(z,s)=A_1(s).
\end{equation*}
\end{lemma}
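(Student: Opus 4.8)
The plan is to analyze the curve $F(z,s)=\int_{\partial\Omega'}\delta\bigl(c^{-1}|x-s|-c^{-1}|x-z|\bigr)\,{\rm d}s(x)$ by interpreting the delta distribution geometrically: the support of the integrand is the set of points $x$ on the circle $\partial\Omega'$ that are equidistant from $s$ and $z$, i.e.\ the intersection of $\partial\Omega'$ with the perpendicular bisector $\ell$ of the segment $[s,z]$. Since $s\neq z$ both lie inside the open disk of radius $R$, this bisector is a line meeting the interior of the disk, hence it intersects $\partial\Omega'$ in exactly two points $x_1,x_2$. Writing the circle as $x=x(\theta)=R(\cos\theta,\sin\theta)$ (in coordinates on $\mathbb{P}$) and setting $g(\theta):=c^{-1}|x(\theta)-s|-c^{-1}|x(\theta)-z|$, the coarea/pullback formula for the one-dimensional delta gives
\begin{equation*}
F(z,s)=\sum_{g(\theta_k)=0}\frac{1}{|g'(\theta_k)|},
\end{equation*}
the sum running over the two roots. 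So the first task is to show $g'(\theta_k)\neq0$ at each root; geometrically $g'(\theta)$ is (up to the factor $c^{-1}$) the difference of the tangential components along $\partial\Omega'$ of the unit vectors from $x(\theta)$ toward $s$ and toward $z$, and this vanishes only if $x(\theta)$ lies on the line through $s$ and $z$ — but a point of $\partial\Omega'$ on both the bisector $\ell$ and the line $\overline{sz}$ (which is perpendicular to $\ell$) would have to be the midpoint of $[s,z]$, which is interior to the disk, a contradiction. This yields $0<F(z,s)<+\infty$ for fixed distinct $s,z$.

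For the blow-up as $z\to s$, I would track the two transversality factors $|g'(\theta_k)|$. As $z\to s$ the bisector $\ell$ still passes through (a neighborhood of) $s$, but the two unit vectors $\widehat{x-s}$ and $\widehat{x-z}$ become nearly parallel, so $g'(\theta_k)\to0$; more precisely, expanding $|x-z|=|x-s|-\widehat{(x-s)}\cdot(z-s)+O(|z-s|^2)$ gives $g(\theta)\approx c^{-1}\widehat{(x(\theta)-s)}\cdot(z-s)$, and differentiating in $\theta$ shows $g'(\theta_k)=c^{-1}\,\partial_\theta\bigl[\widehat{(x(\theta)-s)}\bigr]\cdot(z-s)+o(|z-s|)$, which is $O(|z-s|)$. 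Hence each summand $1/|g'(\theta_k)|$ is of order $1/|z-s|$, forcing $F(z,s)\to+\infty$.

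Finally, to extract the precise rate, I would combine the two contributions. With $x_1,x_2$ the limiting (as $z\to s$) intersection points — these are the two points where the line through $s$ perpendicular to the limiting direction $\hat e:=\lim (z-s)/|z-s|$ meets $\partial\Omega'$ — one gets
\begin{equation*}
|z-s|\,F(z,s)\ \longrightarrow\ \sum_{k=1}^{2}\frac{c}{\bigl|\,\partial_\theta[\widehat{(x_k-s)}]\cdot\hat e\,\bigr|}=:A_1(s,\hat e).
\end{equation*}
The one genuinely delicate point is that this limit a priori depends on the direction $\hat e$ along which $z\to s$, whereas the lemma asserts a single constant $A_1(s)$; I expect this is resolved by a symmetry/averaging argument — the pair $\{x_1,x_2\}$ and the associated tangential-derivative factors transform so that the sum is independent of $\hat e$ (for a circle this should follow from the fact that $\partial_\theta[\widehat{(x-s)}]$ is, up to a positive scalar depending only on $|x-s|$ and the geometry, a rotation of the relevant direction), or alternatively one interprets the limit as an average over approach directions. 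Pinning down this direction-independence, and carrying out the short Taylor expansion cleanly with the correct Jacobian constants, is the main obstacle; the rest is elementary plane geometry and the standard pullback formula for $\delta$.
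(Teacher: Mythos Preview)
Your route is essentially the paper's: parametrize $\partial\Omega'$ by $\theta$, set $g(\theta)=c^{-1}(|x(\theta)-s|-|x(\theta)-z|)$, and evaluate $\int\delta(g)\,{\rm d}s$ by the one-dimensional pullback formula, summing over the two simple roots. The paper simply carries this out in explicit coordinates adapted to the pair $(s,z)$ (taking $s=(a,b_1)$, $z=(a,b_2)$) and obtains the closed form $F(z,s)=\dfrac{C_1+C_2}{C_3}\cdot\dfrac{1}{b_2-b_1}$, from which all three conclusions are read off.

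Two small slips in your write-up. First, the arc-length element is $R\,{\rm d}\theta$, so the pullback formula gives $F(z,s)=R\sum_k |g'(\theta_k)|^{-1}$, not $\sum_k |g'(\theta_k)|^{-1}$. Second, at a root $\theta_k$ one has $|x-s|=|x-z|$, so $g'(\theta_k)=c^{-1}|x-s|^{-1}(z-s)\cdot x'(\theta_k)$; this vanishes when the tangent is orthogonal to $z-s$, i.e.\ when $x(\theta_k)$ lies on the \emph{diameter parallel to $z-s$}, not on the chord $\overline{sz}$. Your contradiction still works: that diameter meets the bisector $\ell$ at an interior point (distance $\le|(s+z)/2|<R$ from the center), hence not on $\partial\Omega'$.

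Your genuine worry about direction-dependence of $A_1$ is exactly the right issue, and the paper does not address it explicitly either: its coordinate frame rotates with the direction of $z-s$, so the parameters $a,b_1$ depend on the approach direction. What saves the day is that the explicit computation collapses. In the paper's notation $|z-s|\,F(z,s)=(C_1+C_2)/C_3$; letting $b_2\to b_1$ and writing $r:=\sqrt{R^2-b_1^2}$ one gets $C_1\to 2cR\,|r-a|$, $C_2\to 2cR\,(r+a)$, $C_3\to 2r$, and since $|a|<r$ (because $s$ is interior) the sum is $4cRr$, giving $(C_1+C_2)/C_3\to 2cR$. Thus $A_1(s)=2cR$ is in fact independent of both the approach direction and of $s$. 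So the resolution is a cancellation visible in the explicit formula rather than an abstract symmetry argument; if you stick with your Taylor-expansion approach you will need to carry the leading constants accurately enough to see the same cancellation in $\sum_k cR/|g'(\theta_k)|$.
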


\begin{proof}
Noting that $z\neq s$, without loss of generality, we can adopt Cartesian coordinates $(x_1,x_2)$ in the two-dimensional cross section $\mathbb{P}$ such that $s=(a, b_1)$, $z=(a, b_2)$,  $-R<a<R$, $-\sqrt{R^2-a^2}<b_1<b_2<\sqrt{R^2-a^2}$
and $\Omega'$ can be parameterized as
\begin{equation*}
 \Omega':
 \left\{
 \begin{array}{lll}
x_1=R\cos\theta, \\
x_2=R\sin\theta,
 \end{array}
\quad\quad 0\leq\theta\leq 2\pi. \right.
 \end{equation*}

Denote $g(\theta)=|x-s|-|x-z|$, $x\in\partial\Omega'$. A direct computation implies $g(\theta)$ has two zero points $\theta_1$ and $\theta_2$ in $[0,2\pi)$ which satisfy
\begin{equation*}
\begin{array}{rl}
\sin \theta_1 = &\sin \theta_2 = \displaystyle{\frac{b_1+b_2}{2R}},\\
\cos \theta_1 = &\displaystyle{\frac{1}{2R}\sqrt{4R^2-(b_1+b_2)^2},\quad \cos \theta_2 = -\frac{1}{2R}\sqrt{4R^2-(b_1+b_2)^2}}.
\end{array}
\end{equation*}
Moreover, we have
\begin{equation*}
g'(\theta_1)=\displaystyle{\frac{(b_2-b_1)\sqrt{4R^2-(b_1+b_2)^2}} {2\sqrt{R^2+a^2-b_1b_2-a\sqrt{4R^2-(b_1+b_2)^2}}}}
\end{equation*}
and
\begin{equation*}
g'(\theta_2)=-\displaystyle{\frac{(b_2-b_1)\sqrt{4R^2-(b_1+b_2)^2}} {2\sqrt{R^2+a^2-b_1b_2+a\sqrt{4R^2-(b_1+b_2)^2}}}}.
\end{equation*}
Then the property of the Dirac delta function implies
\begin{equation*}
\begin{array}{rl}
F(z,s) = &\displaystyle{c\int_{\partial\Omega'}\delta \left(|x-s|-|x-z|\right){\rm d}s(x)} = \displaystyle{c\int_0^{2\pi}\delta \left(g(\theta)\right)R{\rm d}\theta}\\
= &\displaystyle{cR\int_0^{2\pi} \frac{\delta \left(\theta-\theta_1\right)}{|g'(\theta_1)|}{\rm d}\theta} + \displaystyle{cR\int_0^{2\pi} \frac{\delta \left(\theta-\theta_2\right)}{|g'(\theta_2)|}{\rm d}\theta}\\
= & \displaystyle{\frac{C_1+C_2}{C_3}}\cdot \displaystyle{\frac{1}{b_2-b_1}},
\end{array}
\end{equation*}
where
\begin{equation*}
C_1=2cR\sqrt{R^2+a^2-b_1b_2-a\sqrt{4R^2-(b_1+b_2)^2}},
\end{equation*}
\begin{equation*}
C_2=2cR\sqrt{R^2+a^2-b_1b_2+a\sqrt{4R^2-(b_1+b_2)^2}}
\end{equation*}
and
\begin{equation*}
C_3=\sqrt{4R^2-(b_1+b_2)^2}
\end{equation*}
are constants depending on $a$, $b_1$ and $b_2$.

It is obvious that $0<C_1,C_2<2\sqrt{2}cR^2$ and $0<C_3<2R$. Then we have
\begin{equation*}
0<F(z,s)< +\infty
\end{equation*}
and
\begin{equation*}
\lim\limits_{z\rightarrow s}F(z,s)=+\infty.
\end{equation*}
Moreover, for any point $s\in\Omega'$, we have
\begin{equation*}
\lim\limits_{z\rightarrow s}|z-s|F(z,s)= \lim\limits_{z\rightarrow s}\displaystyle{\frac{C_1+C_2}{C_3}},
\end{equation*}
where $\lim\limits_{z\rightarrow s}\displaystyle{\frac{C_1+C_2}{C_3}}$ is a positive constant depending only on the location of $s$. Denote $A_1(s)=\lim\limits_{z\rightarrow s}\displaystyle{\frac{C_1+C_2}{C_3}}$ and the proof is completed.
\end{proof}

\begin{theorem}
Assume that $u(x,t)$ is the solution to the wave equation
\begin{equation*}
c^{-2}\partial_{tt}u(x,t)-\triangle{u(x,t)}=\sum_{j=1}^{K} \lambda(t)\tau_{j}\delta(x-s_j)
\end{equation*}
with a set of source points $S'=\left\{s_j,\,j=1,2,\ldots,K\right\}$ on a two-dimensional cross section $\mathbb{P}\subset\mathbb{R}^3$, $K\in\mathbb{N}^{*}$, $\tau_j>0$ are the intensities of the point sources and $\lambda(t)=\delta(t)$ is the signal function. Assume that $\Omega'\subset\mathbb{P}$ is a circle with radius $R$ such that $S'\subset \Omega'$. The sampling region $D'\subset \Omega'$ satisfies $S'\subset D'$ and $D'\cap\partial\Omega'=\varnothing$. Then the indicator function $I'(z)$ defined by (\ref{indicator function2}) satisfies
\begin{equation*}
 I'(z)
 \left\{
 \begin{array}{ll}
 =+\infty, &z\in S', \\
 < +\infty, &z\in D'\setminus S'.
 \end{array}
 \right.
 \end{equation*}
Moreover, we have
\begin{equation*}
\lim\limits_{d(z,S')\rightarrow 0} I'(z)=+\infty,
\end{equation*}
where $d(z,S')=\min\limits_{j=1,2,\ldots,K} |z-s_j|$.
\end{theorem}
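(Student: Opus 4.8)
The plan is to substitute the explicit solution formula \eqref{point solution} into the definition \eqref{indicator function2} and reduce the double integral to a sum of terms of the form $F(z,s_j)$, to which the preceding Lemma applies directly. First I would use $\lambda(t)=\delta(t)$, so that $G(x,t;s_j)\ast\lambda(t) = G(x,t;s_j) = \delta(t-c^{-1}|x-s_j|)/(4\pi|x-s_j|)$ and likewise $G(x,t;z)\ast\lambda(t) = \delta(t-c^{-1}|x-z|)/(4\pi|x-z|)$. Plugging $u(x,t)=\sum_{j=1}^K \tau_j \delta(t-c^{-1}|x-s_j|)/(4\pi|x-s_j|)$ into \eqref{indicator function2} gives
\begin{equation*}
I'(z)=\sum_{j=1}^{K}\frac{\tau_j}{16\pi^2}\int_{\partial\Omega'}\frac{1}{|x-s_j||x-z|}\left(\int_{\mathbb{R}}\delta\!\left(t-c^{-1}|x-s_j|\right)\delta\!\left(t-c^{-1}|x-z|\right){\rm d}t\right){\rm d}s(x).
\end{equation*}
The inner $t$-integral collapses to $\delta\!\left(c^{-1}|x-s_j|-c^{-1}|x-z|\right)$, so each summand becomes $\frac{\tau_j}{16\pi^2}\int_{\partial\Omega'}\frac{1}{|x-s_j||x-z|}\,\delta\!\left(c^{-1}|x-s_j|-c^{-1}|x-z|\right){\rm d}s(x)$.

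Next I would handle the weight $1/(|x-s_j||x-z|)$. On the support of the delta distribution one has $|x-s_j|=|x-z|$, and since $x$ ranges over the compact circle $\partial\Omega'$ while $s_j,z$ lie in the open region $D'$ with $D'\cap\partial\Omega'=\varnothing$, the distances $|x-s_j|$ and $|x-z|$ are bounded above and below by positive constants. Hence the weighted integral is bounded between positive multiples of $F(z,s_j)=\int_{\partial\Omega'}\delta\!\left(c^{-1}|x-s_j|-c^{-1}|x-z|\right){\rm d}s(x)$; more precisely, writing $m(z,s_j)=\min_{x\in\partial\Omega'}|x-z|^{-2}$-type bounds, each summand is comparable to $F(z,s_j)$ up to constants depending only on the geometry. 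By the Lemma, $0<F(z,s_j)<+\infty$ whenever $z\neq s_j$, and $F(z,s_j)\to+\infty$ as $z\to s_j$ with the sharp rate $|z-s_j|F(z,s_j)\to A_1(s_j)>0$.

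From here the two claims follow. For $z\in D'\setminus S'$, every term $F(z,s_j)$ is finite and positive, so $I'(z)$ is a finite positive sum, i.e.\ $I'(z)<+\infty$. For $z=s_k\in S'$, the $k$-th term already equals $+\infty$ (the limit $\lim_{z\to s_k}F(z,s_k)=+\infty$, interpreted as the value of the divergent integral at $z=s_k$), while the remaining terms stay positive, so $I'(z)=+\infty$. Finally, if $d(z,S')\to 0$ then $z\to s_k$ for some index $k$ (along a subsequence, or for the realizing index), and the $k$-th summand is bounded below by $\frac{\tau_k}{16\pi^2}\,c_{\mathrm{geo}}\,F(z,s_k)\sim \frac{\tau_k c_{\mathrm{geo}} A_1(s_k)}{16\pi^2 |z-s_k|}\to+\infty$, which forces $I'(z)\to+\infty$. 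The main obstacle I anticipate is making the formal manipulation of the product of Dirac deltas rigorous — in particular justifying the interchange of the $t$-integral with the boundary integral and the pointwise pairing $\delta(t-\alpha)\delta(t-\beta)\mapsto\delta(\alpha-\beta)$ — and, relatedly, giving a clean meaning to ``$I'(s_k)=+\infty$'' since the integrand is a distribution; I would phrase the statement via the limit $d(z,S')\to0$ and treat the boundary value as the monotone limit of the positive integrals, so that no genuinely ill-defined product of distributions is ever evaluated.
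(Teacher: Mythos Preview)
Your proposal is correct and follows essentially the same route as the paper: substitute \eqref{point solution} into \eqref{indicator function2}, collapse the $t$-integral to obtain $\sum_j \frac{\tau_j}{16\pi^2}\int_{\partial\Omega'}\frac{\delta(c^{-1}|x-s_j|-c^{-1}|x-z|)}{|x-s_j||x-z|}\,{\rm d}s(x)$, bound the weight $1/(|x-s_j||x-z|)$ above and below using $d_1=\inf_{x\in\partial\Omega',\,z\in D'}|x-z|$ and $d_2=\sup_{x\in\partial\Omega',\,z\in D'}|x-z|$, and then invoke Lemma~3.1 for each $F(z,s_j)$. The only cosmetic difference is that for $z=s_k$ the paper writes the divergent term directly as $\int_{\mathbb{R}}\int_{\partial\Omega'}\tau_k\,\delta^2(t-c^{-1}|x-s_k|)/(16\pi^2|x-s_k|^2)\,{\rm d}s(x)\,{\rm d}t=+\infty$, whereas you (more cautiously) interpret it through the limit $d(z,S')\to 0$; your added remarks about the distributional rigor are well taken but go beyond what the paper attempts.
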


\begin{proof}
\noindent For $z\in S'$, denote $z=s_k, 1\leq k\leq K$. Considering (\ref{point solution}) and (\ref{indicator function2}), we can get
\begin{equation*}
\begin{array}{rl}
I'(z)&=\displaystyle{\int_{\mathbb{R}}\int_{\partial\Omega'}\left( \sum\limits_{j=1}^{K} \tau_j G\left(x,t,s_j\right)\ast\lambda(t)\right)\Big(G(x,t;s_k) \ast\lambda(t)\Big){\rm d}s(x){\rm d}t}\\
&=\displaystyle{\int_{\mathbb{R}}\int_{\partial\Omega'}\left(\sum\limits_{j=1}^{K} \tau_j\frac{\delta\left(t-c^{-1}|x-s_j|\right)}{4\pi|x-s_j|}\right) \frac{\delta\left(t-c^{-1}|x-s_k|\right)}{4\pi|x-s_k|}{\rm d}s(x){\rm d}t}\\
&\geq\displaystyle{\int_{\mathbb{R}}\int_{\partial\Omega'}\tau_k\frac{\delta^2\left(t-c^{-1}|x-s_k|\right)}{16\pi^2|x-s_k|^2}{\rm d}s(x){\rm d}t}\\
&=+\infty.
\end{array}
\end{equation*}

Denote $d_1=\inf\limits_{x\in\partial\Omega',z\in D'}|x-z|$. For $z\in D'\setminus S'$, we have
\begin{equation*}
\begin{array}{rl}
I'(z)&=\displaystyle{\int_{\mathbb{R}}\int_{\partial\Omega'}\left(\sum\limits_{j=1}^{K}\tau_j\frac{\delta\left(t-c^{-1}|x-s_j|\right)}{4\pi|x-s_j|}\right)\frac{\delta\left(t-c^{-1}|x-z|\right)}{4\pi|x-z|}{\rm d}s(x){\rm d}t}\\
&=\displaystyle{\sum\limits_{j=1}^{K}\tau_j\int_{\partial\Omega'}\frac{\delta\left(c^{-1}|x-s_j|-c^{-1}|x-z|\right)}{16\pi^2|x-s_j||x-z|}{\rm d}s(x)}\\
&\leq\displaystyle{\frac{1}{16\pi^2d_1^2}\sum\limits_{j=1}^{K}\tau_j \int_{\partial\Omega'}\delta\left(c^{-1}|x-s_j|-c^{-1}|x-z|\right){\rm d}s(x)}.
\end{array}
\end{equation*}
Then Lemma 3.1 implies $I'(z)<+\infty$.

If $d(z,S')\rightarrow 0$, without lose of generality, assume that $z\rightarrow s_k\in S'$. Denote $d_2=\sup\limits_{x\in\partial\Omega',z\in D'}|x-z|$. Combining with Lemma 3.1, we have
\begin{equation*}
\begin{array}{rl}
\lim\limits_{d(z,S')\rightarrow 0}I'(z)&\geq\displaystyle{\tau_k\lim\limits_{z\rightarrow s_k}\int_{\partial\Omega'} \frac{\delta\left(c^{-1}|x-s_k|-c^{-1}|x-z| \right)}{16\pi^2|x-s_k||x-z|}{\rm d}s(x)}\\
&\geq\displaystyle{\frac{\tau_k}{16\pi^2 d_2^2}\lim\limits_{z\rightarrow s_k}\int_{\partial\Omega'} \delta\left(c^{-1}|x-s_k|-c^{-1}|x-z| \right){\rm d}s(x)}\\
&=+\infty,
\end{array}
\end{equation*}
which completes the proof.
\end{proof}

Then we consider the proof in $\mathbb{R}^3$ without a priori information. Conclusions similar to the two-dimensional cross section case can be reached.

\begin{lemma}
Assume that $\Omega$ is a spherical region of radius $R$, $s$ and $z$ are two different points in $\Omega$ and $c>0$ is the sound speed. Then we have
\begin{equation*}
0<F(z,s):=\displaystyle{\int_{\partial\Omega}\delta \left(c^{-1}|x-s|-c^{-1}|x-z|\right){\rm d}s(x)}<+\infty
\end{equation*}
and
\begin{equation*}
\lim\limits_{z\rightarrow s}F(z,s)=+\infty.
\end{equation*}
Moreover, for any point $s\in\Omega$, there exists a constant $A_2(s)>0$ such that
\begin{equation*}
\lim\limits_{z\rightarrow s}|z-s|F(z,s)=A_2(s).
\end{equation*}
\end{lemma}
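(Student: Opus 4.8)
The plan is to mimic the structure of the proof of Lemma 3.1 (the two-dimensional case), but now integrating over the sphere $\partial\Omega$ rather than a circle. First I would fix coordinates: since $z\neq s$ I may place $s$ and $z$ on a common coordinate axis, say $s=(0,0,b_1)$ and $z=(0,0,b_2)$ with $b_1\neq b_2$, both strictly inside the ball of radius $R$. Parameterize $\partial\Omega$ by spherical coordinates $x=(R\sin\phi\cos\theta,\,R\sin\phi\sin\theta,\,R\cos\phi)$ with surface element $R^2\sin\phi\,\mathrm{d}\phi\,\mathrm{d}\theta$. Because both $s$ and $z$ lie on the $x_3$-axis, the quantities $|x-s|$ and $|x-z|$ depend only on $\phi$ (not on $\theta$), so the $\theta$-integral contributes a harmless factor $2\pi$ and the problem reduces to a one-dimensional integral in $\phi$, exactly analogous to the $\theta$-integral in Lemma 3.1.

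The next step is to set $g(\phi)=|x-s|-|x-z|$ on $\partial\Omega$ and analyze its zeros. I expect $g$ to vanish precisely where $x$ is equidistant from $s$ and $z$, i.e.\ on the intersection of $\partial\Omega$ with the perpendicular bisector plane of the segment $[s,z]$; since that plane is $x_3=(b_1+b_2)/2$, the zero set is the circle $\cos\phi_0 = (b_1+b_2)/(2R)$, a single value $\phi_0\in(0,\pi)$. I would compute $g'(\phi_0)$ explicitly and check it is nonzero and proportional to $(b_2-b_1)$, so that the Dirac delta collapses via $\delta(g(\phi))=\delta(\phi-\phi_0)/|g'(\phi_0)|$. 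Carrying out
\begin{equation*}
F(z,s)=c\int_{\partial\Omega}\delta\bigl(|x-s|-|x-z|\bigr)\,\mathrm{d}s(x)
= 2\pi c R^2 \int_0^\pi \frac{\delta(\phi-\phi_0)}{|g'(\phi_0)|}\sin\phi\,\mathrm{d}\phi
= \frac{2\pi c R^2 \sin\phi_0}{|g'(\phi_0)|},
\end{equation*}
I would then show the right-hand side is a finite positive number for $z\neq s$ (because $\sin\phi_0>0$ and $|g'(\phi_0)|>0$), that it behaves like $\mathrm{const}/|b_2-b_1|=\mathrm{const}/|z-s|$ as $z\to s$ along the axis, hence blows up, and that $|z-s|F(z,s)$ tends to a positive limit $A_2(s)$ depending only on $s$. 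As in Lemma 3.1 I should check the limit is the same regardless of the direction in which $z\to s$, using that the coordinate choice only fixed the common axis and the resulting expression for $\lim|z-s|F(z,s)$ depends continuously on the limiting position $s$ (note that when $s$ is the center, by symmetry the limit is manifestly direction-independent, and for general $s$ one re-runs the computation with $s$ not at the origin, exactly as $a,b_1,b_2$ appear in the 2D lemma).

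The main obstacle I anticipate is bookkeeping in the $g'(\phi_0)$ computation when $s$ is not at the center of the sphere: then $|x-s|$ and $|x-z|$ no longer depend on $\phi$ alone in a fully symmetric way unless I am careful about where the $\theta$-independence genuinely holds. The clean fix is to always align the coordinate axis with the segment $[s,z]$ (which is legitimate since $z\neq s$), so that $\theta$-independence is automatic; the price is that the center of the sphere need not be on that axis, so I must redo the distance computations with the sphere centered at a generic point relative to the $[s,z]$ axis. This is the genuinely heavier version of the 2D computation with parameters $a,b_1,b_2$, and getting the constants $C_1,C_2,C_3$ and their spherical analogues right — while verifying they stay bounded away from $0$ and $\infty$ uniformly as $z\to s$ — is where the real work lies. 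Once that explicit formula is in hand, the three claimed conclusions follow immediately by inspection, just as in the proof of Lemma 3.1.
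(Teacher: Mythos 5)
The gap is in your reduction to an axisymmetric integral. To have the surface element $R^2\sin\phi\,{\rm d}\phi\,{\rm d}\theta$ you must use spherical coordinates centred at the centre of the ball $\Omega$; to have $|x-s|$ and $|x-z|$ depend on $\phi$ alone you need $s$ and $z$ to lie on the polar axis of those same coordinates, i.e.\ the line through $s$ and $z$ must pass through the centre of $\Omega$. For a generic pair $s,z\in\Omega$ these two requirements are incompatible, and your proposed fix --- ``align the coordinate axis with the segment $[s,z]$, so that $\theta$-independence is automatic'' --- is exactly the false step: if the polar axis is taken through $s$ and $z$ but the sphere is centred off that axis, then $\partial\Omega$ is no longer the coordinate surface $\{r=R\}$ and the $\theta$-integration does not factor out; if instead you keep the sphere parameterized about its own centre with the axis merely parallel to $z-s$ (which is what the paper does, writing $s=(a,b,c_1)$, $z=(a,b,c_2)$ with the sphere centred at the origin), then $|x-s|$ and $|x-z|$ genuinely depend on $\theta$. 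Hence your key identity $F(z,s)=2\pi cR^2\sin\phi_0/|g'(\phi_0)|$, with a single $\theta$-independent Jacobian, is valid only in the special case where the centre of $\Omega$ lies on the line through $s$ and $z$, and the proposal as written never produces a formula covering the general configuration.

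The correct general computation keeps the azimuthal dependence: with the axis parallel to $z-s$, the zero set of $g(\varphi,\theta)=|x-s|-|x-z|$ is still the coordinate circle $\cos\varphi_0=(c_1+c_2)/(2R)$, but the delta must be collapsed in $\varphi$ at each fixed $\theta$, giving
\begin{equation*}
F(z,s)=cR^2\sin\varphi_0\int_0^{2\pi}\frac{{\rm d}\theta}{\bigl|\frac{\partial g}{\partial\varphi}(\varphi_0,\theta)\bigr|}=\frac{C_4}{c_2-c_1},
\end{equation*}
where $C_4=cR\int_0^{2\pi}\sqrt{R^2+a^2+b^2-c_1c_2+(a\cos\theta-b\sin\theta)\sqrt{4R^2-(c_1+c_2)^2}}\,{\rm d}\theta$ stays bounded between positive constants as $z\to s$; the three assertions then follow exactly as in Lemma 3.1. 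So your overall skeleton (zero circle on the bisector plane, delta collapse, $1/|z-s|$ blow-up) coincides with the paper's, but the step you set aside as bookkeeping is precisely where the argument must be carried out with the $\theta$-dependent Jacobian rather than a factor $2\pi$. Incidentally, the direction-independence of the limit $A_2(s)$ that you flag is a legitimate concern, but the paper's own proof does not treat it any more carefully than you do.
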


\begin{proof}
Noting that $s\neq z$, adopt Cartesian coordinates $(x_1,x_2,x_3)$ such that $s=(a,b,c_1)$, $z=(a,b,c_2)$, $a^2+b^2<R^2$, $-\sqrt{R^2-a^2-b^2}<c_1<c_2<\sqrt{R^2-a^2-b^2}$ and $\Omega$ can be parameterized as
\begin{equation*}
 \Omega:
 \left\{
 \begin{array}{ll}
x_1=R\sin\varphi\cos\theta,  \\
x_2=R\sin\varphi\sin\theta,  \\
x_3=R\cos\varphi,
 \end{array}
 \right.\quad \quad 0\leq\varphi\leq \pi, 0\leq\theta\leq 2\pi.
 \end{equation*}

Similar to the proof of Lemma 3.1, denote $g(\varphi,\theta)=|x-s|-|x-z|$. Then we have $g(\varphi,\theta)=0$ when $\varphi=\varphi_0$ which satisfies
\begin{equation*}
\cos \varphi_0=\displaystyle{\frac{c_1+c_2}{2R}},\quad\sin \varphi_0=\displaystyle{\frac{1}{2R}\sqrt{4R^2-(c_1+c_2)^2}}.
\end{equation*}
Moreover, we have
\begin{equation*}
\displaystyle{\frac{\partial g}{\partial \varphi}(\varphi_0,\theta) = \frac{(c_1-c_2)\sqrt{4R^2-(c_1+c_2)^2}} {2\sqrt{R^2+a^2+b^2-c_1c_2 +(a\cos\theta-b\sin\theta)\sqrt{4R^2-(c_1+c_2)^2}}}}.
\end{equation*}
The property of the Dirac delta function implies
\begin{equation*}
\begin{array}{rl}
F(z,s) = & \displaystyle{c\int_0^{2\pi}\int_0^{\pi}\delta \left(g(\varphi,\theta)\right)R^2\sin\varphi{\rm d}\varphi {\rm d}\theta}\\
= &\displaystyle{cR^2\int_0^{2\pi} \int_0^{\pi}\frac{\delta \left(\varphi-\varphi_0\right)}{\left|\frac{\partial g}{\partial \varphi}(\varphi_0,\theta)\right|}\sin{\varphi}{\rm d}\varphi {\rm d}\theta}\\
= & \displaystyle{C_4}\cdot \displaystyle{\frac{1}{c_2-c_1}},
\end{array}
\end{equation*}
where
\begin{equation*}
C_4=cR\int_0^{2\pi} \displaystyle{\sqrt{R^2+a^2+b^2-c_1c_2 +(a\cos\theta-b\sin\theta)\sqrt{4R^2-(c_1+c_2)^2}}} {\rm d}\theta
\end{equation*}
is a positive constant depending on $a$, $b$, $c_1$ and $c_2$. Then a similar discussion as that in the proof of Lemma 3.1 implies the conclusion.
\end{proof}

\begin{theorem}
Assume that $u(x,t)$ is the solution to the wave equation
\begin{equation*}
c^{-2}\partial_{tt}u(x,t)-\triangle{u(x,t)}=\sum_{j=1}^{K} \lambda(t)\tau_{j}\delta(x-s_j)
\end{equation*}
with a set of source points $S=\left\{s_j,\,j=1,2,\ldots,K\right\}$ in $\mathbb{R}^3$, $K\in\mathbb{N}^{*}$, $\tau_i>0$ are the intensities of the point sources and $\lambda(t)=\delta(t)$ is the signal function. Assume that $\Omega$ is a spherical region of radius $R$ such that $S\subset \Omega$. The sampling region $D\subset \Omega$ satisfies $S\subset D$ and $D\cap\partial\Omega=\varnothing$. Then the indicator function $I(z)$ defined by (\ref{indicator function}) satisfies
\begin{equation*}
 I(z)
 \left\{
 \begin{array}{ll}
 =+\infty, &z\in S, \\
 < +\infty, &z\in D\setminus S.
 \end{array}
 \right.
 \end{equation*}
Moreover, we have
\begin{equation*}
\lim\limits_{d(z,S)\rightarrow 0} I(z)=+\infty,
\end{equation*}
where $d(z,S)=\min\limits_{j=1,2,\ldots,K} |z-s_j|$.
\end{theorem}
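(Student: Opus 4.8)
The plan is to mirror the structure of the proof of Theorem 3.2 almost verbatim, substituting Lemma 3.3 (the three-dimensional version) for Lemma 3.1 wherever the geometric estimate on $F(z,s)$ was invoked. The key observation is that the two-dimensional proof never actually used that $\Omega'$ was a circle in an essential way beyond the properties packaged into Lemma 3.1; since Lemma 3.3 provides exactly the same conclusions for a sphere $\partial\Omega$ in $\mathbb{R}^3$, the argument transfers with only notational changes.

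First I would treat the case $z\in S$. Writing $z=s_k$ for some $1\le k\le K$, I substitute the representation \eqref{point solution} for $u$ and the convolution formula $G\ast\lambda(x,t;z)=\delta(t-c^{-1}|x-z|)/(4\pi|x-z|)$ (using $\lambda=\delta$) into the definition \eqref{indicator function} of $I(z)$. Keeping only the $j=k$ term in the sum (all terms are non-negative since the $\tau_j>0$) yields a lower bound of the form $\tau_k\int_{\mathbb{R}}\int_{\partial\Omega}\delta^2(t-c^{-1}|x-s_k|)/(16\pi^2|x-s_k|^2)\,{\rm d}s(x){\rm d}t$, and the square of the Dirac delta integrated in $t$ forces the integral to be $+\infty$, exactly as in Theorem 3.2.

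Next, for $z\in D\setminus S$, I carry out the $t$-integration first: for each $j$ the product $\delta(t-c^{-1}|x-s_j|)\,\delta(t-c^{-1}|x-z|)$ integrates in $t$ to $\delta(c^{-1}|x-s_j|-c^{-1}|x-z|)$, so that $I(z)=\sum_{j=1}^K \tau_j\int_{\partial\Omega}\delta(c^{-1}|x-s_j|-c^{-1}|x-z|)/(16\pi^2|x-s_j||x-z|)\,{\rm d}s(x)$. Setting $d_1=\inf_{x\in\partial\Omega,\,z\in D}|x-z|>0$ (positive because $D\cap\partial\Omega=\varnothing$), I bound the denominator below by $16\pi^2 d_1^2$ and obtain $I(z)\le (16\pi^2 d_1^2)^{-1}\sum_j\tau_j F(z,s_j)$, which is finite by Lemma 3.3 since $z\ne s_j$ for all $j$. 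For the limiting statement, if $d(z,S)\to 0$ then $z\to s_k$ for some $k$; keeping the $j=k$ term, bounding the denominator above by $16\pi^2 d_2^2$ with $d_2=\sup_{x\in\partial\Omega,\,z\in D}|x-z|<\infty$, and applying the divergence $\lim_{z\to s_k}F(z,s_k)=+\infty$ from Lemma 3.3 gives $\lim_{d(z,S)\to 0}I(z)=+\infty$.

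I do not expect any genuine obstacle here: all the analytic difficulty — the geometric computation showing $F(z,s)$ is finite, positive, and blows up like $|z-s|^{-1}$ — has already been absorbed into Lemma 3.3. The only points needing a little care are the formal manipulation of products of Dirac deltas (the $\delta^2$ term and the $t$-integration of a product of two shifted deltas), which should be read in the same distributional sense used throughout the paper, and verifying that $d_1>0$ and $d_2<\infty$, both of which follow from $D\subset\Omega$ with $D\cap\partial\Omega=\varnothing$ together with the boundedness of $\Omega$.
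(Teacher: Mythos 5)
Your proposal is correct and follows essentially the same route as the paper: the lower bound via the $\delta^2$ term for $z=s_k$, the reduction $I(z)=\sum_j\tau_j\int_{\partial\Omega}\delta\left(c^{-1}|x-s_j|-c^{-1}|x-z|\right)/\left(16\pi^2|x-s_j||x-z|\right){\rm d}s(x)$ bounded by $d_1^{-2}$ together with Lemma 3.3 for finiteness off $S$, and the $d_2^{-2}$ lower bound with the blow-up of $F$ from Lemma 3.3 for the limiting statement. No gaps worth noting.
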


\begin{proof}
For $z=s_k\in S, 1\leq k\leq K$, similar to the proof of Theorem 3.2, we have
\begin{equation*}
I(z)\geq\displaystyle{\int_{\mathbb{R}}\int_{\partial\Omega}\tau_k\frac{\delta^2\left(t-c^{-1}|x-s_k|\right)}{16\pi^2|x-s_k|^2}{\rm d}s(x){\rm d}t}=+\infty.
\end{equation*}

Denote $d_1=\inf\limits_{x\in\partial\Omega, z\in D}|x-z|$ and $d_2=\sup\limits_{x\in\partial\Omega, z\in D}|x-z|$. For $z\in D\setminus S$, combining with Lemma 3.3, we can get
\begin{equation*}
\begin{array}{rl}
I(z)&=\displaystyle{\sum\limits_{j=1}^{K}\int_{\partial\Omega}\tau_j\frac{\delta\left(c^{-1}|x-s_j|-c^{-1}|x-z|\right)}{16\pi^2|x-s_j||x-z|}{\rm d}s(x)}\\
&\leq\displaystyle{\frac{1}{16\pi^2d_1^2}\sum\limits_{j=1}^{K}\tau_j\int_{\partial\Omega}\delta\left(c^{-1}|x-s_j|-c^{-1}|x-z|\right){\rm d}s(x)}\\
&<+\infty.
\end{array}
\end{equation*}
Moreover, if $d(z,S)\rightarrow 0$, assume that $z\rightarrow s_k\in S$. Then we have
\begin{equation*}
\lim\limits_{d(z,S)\rightarrow 0}I(z)\geq\displaystyle{\frac{\tau_k}{16\pi^2 d_2^2}\lim\limits_{z\rightarrow s_k}\int_{\partial\Omega} \delta\left(c^{-1}|x-s_k|-c^{-1}|x-z| \right){\rm d}s(x)} =+\infty,
\end{equation*}
which completes the proof.
\end{proof}

\subsection{The reconstruction of the sources on a curve}
\begin{figure}[!ht]
 \center
 \scriptsize
 \begin{tabular}{cc}
  \includegraphics[width=4.5cm]{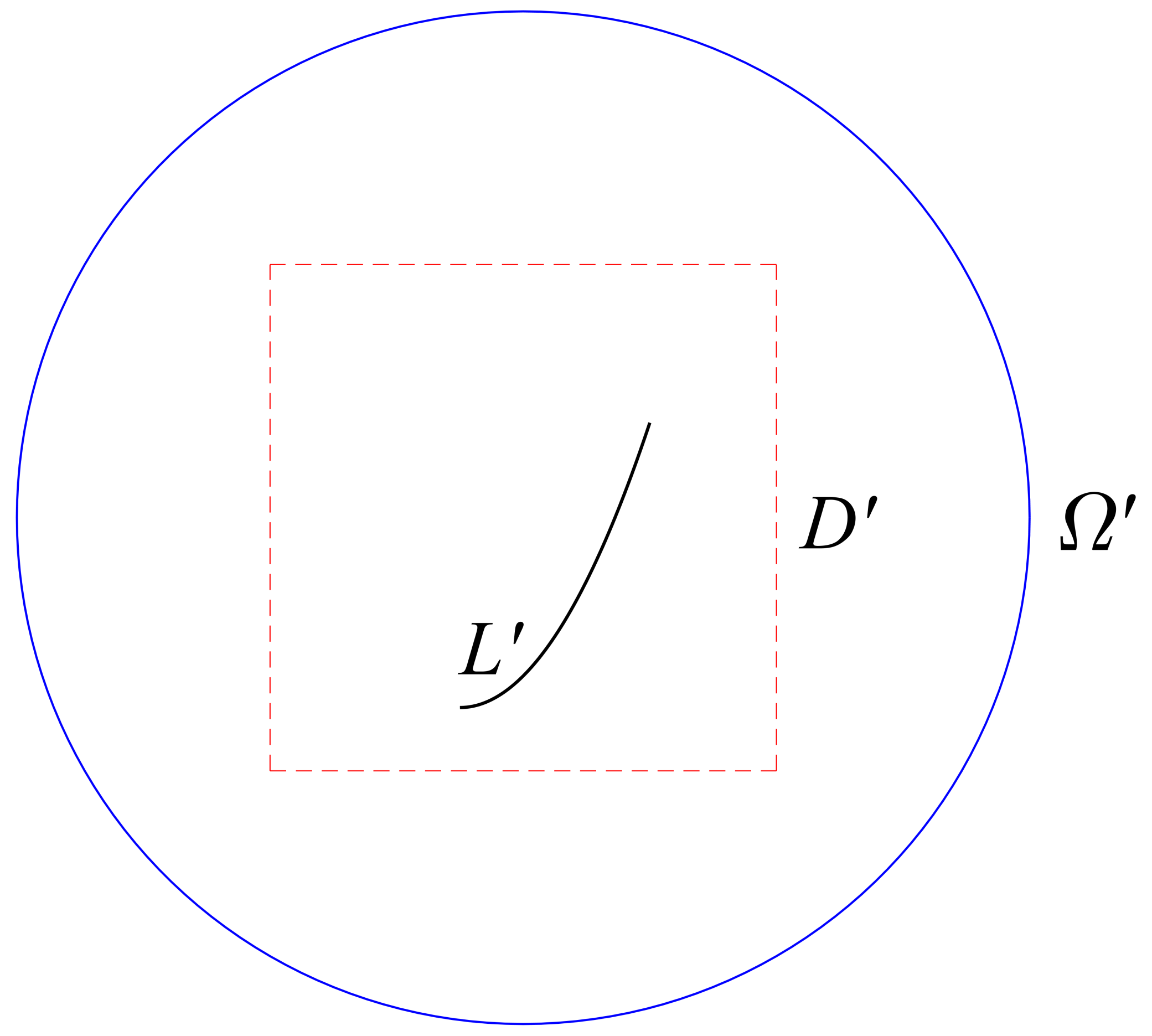}~~~ & ~~~ \includegraphics[width=4.5cm]{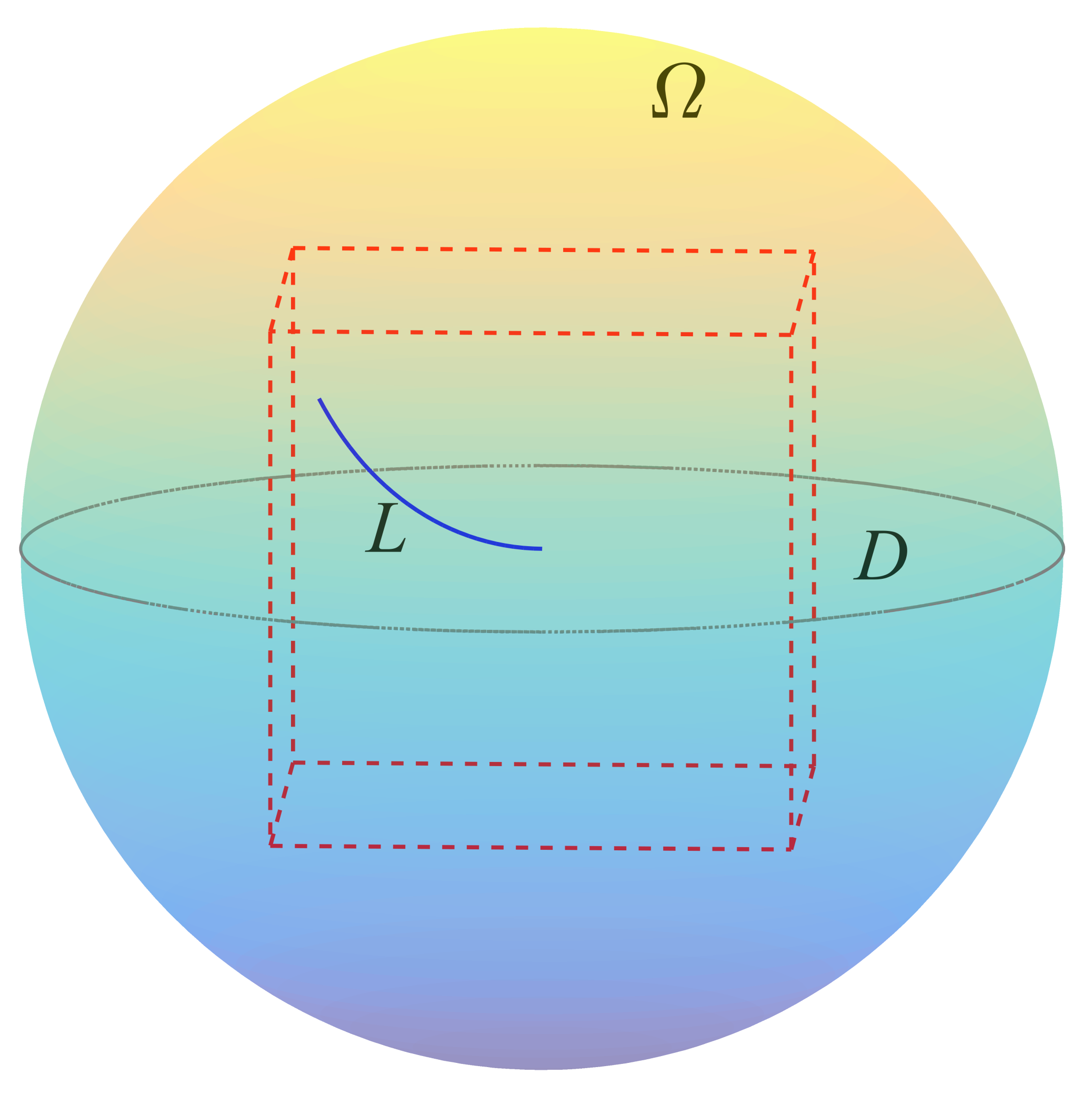} \\
  (a) & (b)  \\
 \end{tabular}
 \caption{Sketch of the measurement curve/surface, sampling region and source curve. (a) A two-dimensional cross section. (b) The three-dimensional diagram.}
\end{figure}

For the scattering problem with the sources on a curve $L$, we still start with a proof in a two-dimensional cross section.

\begin{theorem}
Assume that $u(x,t)$ is the solution to the wave equation
\begin{equation*}
c^{-2}\partial_{tt}u(x,t)-\triangle{u(x,t)}= \lambda(t)\tau(x)\delta(L')
\end{equation*}
with a piecewise smooth source curve $L'$ on a two-dimensional cross section $\mathbb{P}\subset\mathbb{R}^3$, $\tau(x)\geq\tau_0> 0$ is a bounded intensity function and $\lambda(t)=\delta(t)$ is the signal function. Assume that $\Omega'\subset\mathbb{P}$ is a circle with radius $R$ such that $L'\subset\Omega'$. The sampling region $D'\subset \Omega'$ satisfies $L'\subset D'$ and $D'\cap\partial\Omega'=\varnothing$. Then the indicator function $I'(z)$ defined by (\ref{indicator function2}) satisfies
\begin{equation*}
 I'(z)
 \left\{
 \begin{array}{ll}
 = +\infty, &z\in L', \\
 < +\infty, &z\in D'\setminus L'.
 \end{array}
 \right.
 \end{equation*}
\end{theorem}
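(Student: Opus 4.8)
The plan is to follow the pattern of the proofs of Theorems 3.2 and 3.4, replacing the finite sum over source points by an integral along the curve $L'$. First I would reduce the indicator function: since $\lambda(t)=\delta(t)$ we have $G(x,t;z)\ast\lambda(t)=\delta(t-c^{-1}|x-z|)/(4\pi|x-z|)$, and the representation (\ref{line solution}) gives $u(x,t)=\int_{L'}\tau(y)\,\delta(t-c^{-1}|x-y|)/(4\pi|x-y|)\,{\rm d}s(y)$. Substituting both into (\ref{indicator function2}) and performing the $t$-integration of the product of the two temporal Dirac masses (interchanging the $t$-, $x$- and $y$-integrations, exactly as in Theorems 3.2 and 3.4) yields
\begin{equation*}
I'(z)=\int_{L'}\tau(y)\left(\int_{\partial\Omega'}\frac{\delta\big(c^{-1}|x-y|-c^{-1}|x-z|\big)}{16\pi^2\,|x-y|\,|x-z|}\,{\rm d}s(x)\right){\rm d}s(y).
\end{equation*}
Writing $d_1=\inf_{x\in\partial\Omega',\,y\in D'}|x-y|>0$ and $d_2=\sup_{x\in\partial\Omega',\,y\in D'}|x-y|<\infty$, using $d_1\leq|x-y|,|x-z|\leq d_2$, $\tau_0\leq\tau(y)\leq\|\tau\|_\infty$, and recalling $F(z,y)$ from Lemma 3.1, this gives the two-sided bound
\begin{equation*}
\frac{\tau_0}{16\pi^2 d_2^2}\,J(z)\leq I'(z)\leq\frac{\|\tau\|_\infty}{16\pi^2 d_1^2}\,J(z),\qquad J(z):=\int_{L'}F(z,y)\,{\rm d}s(y),
\end{equation*}
so the proof reduces to showing that $J(z)<+\infty$ for $z\in D'\setminus L'$ and $J(z)=+\infty$ for $z\in L'$.

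For $z\in D'\setminus L'$, the curve $L'$ is compact and $z\notin L'$, so $\delta_0:=\mathrm{dist}(z,L')>0$. From the explicit formula in Lemma 3.1, $F$ is continuous on the set of pairs of distinct points of $\Omega'$; the only degeneration that could destroy this, the perpendicular bisector of $[y,z]$ becoming tangent to the circle $\partial\Omega'$, never happens, because that bisector passes through the midpoint of $[y,z]$, which lies in the convex region $\Omega'$ and hence stays at distance strictly less than $R$ from the centre. Thus $y\mapsto F(z,y)$ is continuous on the compact set $L'$, hence bounded, and $J(z)\leq l\,\sup_{y\in L'}F(z,y)<+\infty$, where $l$ is the length of $L'$; therefore $I'(z)<+\infty$.

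For $z\in L'$, I would use the symmetry $F(z,y)=F(y,z)$ together with the last assertion of Lemma 3.1, which gives $\lim_{y\to z}|z-y|\,F(z,y)=A_1(z)>0$; hence there is $\rho>0$ with $F(z,y)\geq A_1(z)/(2|z-y|)$ for all $y\in L'$ with $0<|z-y|<\rho$. Choosing a smooth arc of the piecewise smooth curve $L'$ having $z$ as an endpoint and parametrising it by arclength $\sigma\in[0,\sigma_1]$, with $z=y(0)$ and $\sigma_1$ small enough that $|z-y(\sigma)|<\rho$, the chord--arc inequality $|z-y(\sigma)|\leq\sigma$ gives
\begin{equation*}
J(z)\geq\int_0^{\sigma_1}\frac{A_1(z)}{2\,|z-y(\sigma)|}\,{\rm d}\sigma\geq\frac{A_1(z)}{2}\int_0^{\sigma_1}\frac{{\rm d}\sigma}{\sigma}=+\infty,
\end{equation*}
so $I'(z)=+\infty$.

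The main obstacle is the estimate for $z\in D'\setminus L'$: Lemma 3.1 alone only yields $F(z,y)<+\infty$ pointwise in $y$, whereas here one integrates over a continuum of points $y\in L'$, so a bound on $F(z,y)$ uniform in $y\in L'$ is required --- obtained above from continuity of $F$ off the diagonal and compactness of $L'$, the key geometric fact being the transversality of the perpendicular bisector to $\partial\Omega'$ for all $y,z\in\Omega'$. A secondary, essentially cosmetic point is that the manipulations with products of temporal Dirac distributions in the reduction step are formal and are to be read exactly as in the proofs of Theorems 3.2 and 3.4, e.g. as limits of the corresponding identities for smooth causal signals $\lambda$ approximating $\delta$.
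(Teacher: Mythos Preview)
Your proof is correct and follows essentially the same route as the paper: reduce $I'(z)$ via (\ref{line solution}) and the $t$-integration to an integral of $F(z,y)$ over $y\in L'$, sandwich it using $d_1,d_2$ and $\tau_0\le\tau\le\|\tau\|_\infty$, and then invoke Lemma~3.1 to show the $y$-integral is infinite when $z\in L'$ (via the $1/|y-z|$ singularity) and finite otherwise. If anything you are more careful than the paper on the case $z\in D'\setminus L'$: the paper simply cites Lemma~3.1 and writes $<+\infty$, whereas you correctly note that pointwise finiteness of $F(z,y)$ is not enough and supply the missing uniform bound via continuity of $F$ off the diagonal and compactness of $L'$.
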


\begin{proof}
Denote $d_1=\inf\limits_{x\in\partial\Omega', z\in D'}|x-z|$ and $d_2=\sup\limits_{x\in\partial\Omega', z\in D'}|x-z|$. For $z=y_0\in L'$, combining with (\ref{line solution}) and (\ref{indicator function2}), we have
\begin{equation*}
\begin{array}{rl}
I'(z)&=\displaystyle{\int_{\mathbb{R}}\int_{\partial\Omega'}\int_{L'}  \tau(y)\Big(G\left(x,t;y\right)\ast\lambda(t)\Big) {\rm d}s(y)\cdot\Big(G(x,t;y_0) \ast\lambda(t)\Big){\rm d}s(x){\rm d}t}\\
&=\displaystyle{\int_{L'} \tau(y) \int_{\partial\Omega'} \int_{\mathbb{R}} \frac{\delta(t-c^{-1}|x-y|)}{4\pi|x-y|} \frac{\delta(t-c^{-1}|x-y_0|)}{4\pi|x-y_0|} {\rm d}t} {\rm d}s(x) {\rm d}s(y)\\
&=\displaystyle{\int_{L'} \tau(y) \int_{\partial\Omega'} \frac{\delta(c^{-1}|x-y|-c^{-1}|x-y_0|)}{16\pi^2|x-y||x-y_0|} {\rm d}s(x) {\rm d}s(y)}\\
&\geq\displaystyle{\frac{\tau_0}{16\pi^2d_2^2}\int_{L'}  \int_{\partial\Omega'} \delta(c^{-1}|x-y|-c^{-1}|x-y_0|) {\rm d}s(x) {\rm d}s(y)}.
\end{array}
\end{equation*}
Lemma 3.1 implies
\begin{equation*}
\displaystyle{F(y):=\int_{\partial\Omega'} \delta(c^{-1}|x-y|-c^{-1}|x-y_0|) {\rm d}s(x)}
\end{equation*}
has the same singularity as $\displaystyle{\frac{1}{|y-y_0|}}$ when $y\rightarrow y_0$. Then we can get
\begin{equation*}
\displaystyle{\int_{L'} F(y) {\rm d}s(y)=+\infty}
\end{equation*}
and $I'(z)=+\infty$.

For $z\in D'\setminus L'$, combining with Lemma 3.1, we have
\begin{equation*}
\begin{array}{rl}
I'(z)&=\displaystyle{\int_{\mathbb{R}}\int_{\partial\Omega'}\int_{L'} \tau(y) \frac{\delta(t-c^{-1}|x-y|)}{4\pi|x-y|} \frac{\delta(t-c^{-1}|x-z|)}{4\pi|x-z|} {\rm d}s(y){\rm d}s(x){\rm d}t}\\
&=\displaystyle{\int_{L'}\tau(y) \int_{\partial\Omega'}\frac{\delta(c^{-1}|x-y|-c^{-1}|x-z|)} {16\pi^2|x-y||x-z|}{\rm d}s(x){\rm d}s(y)}\\
&\leq\displaystyle{\frac{1}{16\pi^2d_1^2} \int_{L'}\tau(y)} \displaystyle{\int_{\partial\Omega'}\delta(c^{-1}|x-y|-c^{-1}|x-z|) {\rm d}s(x){\rm d}s(y)}\\
&<+\infty,
\end{array}
\end{equation*}
which completes the proof.
\end{proof}

Then we give the proof in $\mathbb{R}^3$ without a priori information.

\begin{theorem}
Assume that $u(x,t)$ is the solution to the wave equation
\begin{equation*}
c^{-2}\partial_{tt}u(x,t)-\triangle{u(x,t)}= \lambda(t)\tau(x)\delta_L(x)
\end{equation*}
with a piecewise smooth source curve $L\subset\mathbb{R}^3$, $\tau(x)\geq\tau_0>0$ is a bounded intensity function and $\lambda(t)=\delta(t)$ is the signal function. Assume that $\Omega$ is a spherical region of radius $R$ such that $L\subset\Omega$. The sampling region $D\subset \Omega$ satisfies $L\subset D$ and $D\cap\partial\Omega=\varnothing$. Then the indicator function $I(z)$ defined by (\ref{indicator function}) satisfies
\begin{equation*}
 I(z)
 \left\{
 \begin{array}{ll}
 = +\infty, &z\in L, \\
 < +\infty, &z\in D\setminus L.
 \end{array}
 \right.
 \end{equation*}
\end{theorem}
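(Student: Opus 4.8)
The plan is to follow the architecture of the proof of Theorem 3.5, replacing the planar circle and Lemma 3.1 by the sphere $\partial\Omega$ and Lemma 3.3. First I would substitute the representation formula (\ref{line solution}) for $u$ into the indicator function (\ref{indicator function}) with $\lambda(t)=\delta(t)$, so that $G(x,t;y)\ast\lambda(t)=\frac{\delta(t-c^{-1}|x-y|)}{4\pi|x-y|}$ and similarly for the $z$-factor. Using Fubini and the sifting identity $\int_{\mathbb{R}}\delta(t-c^{-1}|x-y|)\,\delta(t-c^{-1}|x-z|)\,{\rm d}t=\delta(c^{-1}|x-y|-c^{-1}|x-z|)$ to perform the $t$-integration first, this turns the indicator function into
\begin{equation*}
I(z)=\int_{L}\tau(y)\int_{\partial\Omega}\frac{\delta(c^{-1}|x-y|-c^{-1}|x-z|)}{16\pi^2|x-y||x-z|}\,{\rm d}s(x)\,{\rm d}s(y),
\end{equation*}
which is exactly the three-dimensional analogue of the expression appearing in the proof of Theorem 3.5, with $F(z,y)=\int_{\partial\Omega}\delta(c^{-1}|x-y|-c^{-1}|x-z|)\,{\rm d}s(x)$ now being the quantity controlled by Lemma 3.3.

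For the case $z=y_0\in L$, I would bound the kernel from below using $|x-y|,|x-y_0|\le d_2:=\sup_{x\in\partial\Omega,\,w\in D}|x-w|<+\infty$ (valid since $L\subset D$) and $\tau\ge\tau_0$, which reduces matters to showing $\int_{L}F(y_0,y)\,{\rm d}s(y)=+\infty$. By Lemma 3.3, $\lim_{y\to y_0}|y-y_0|F(y_0,y)=A_2(y_0)>0$, so $F(y_0,y)\ge\frac{A_2(y_0)}{2|y-y_0|}$ for $y$ close to $y_0$. Parametrising $L$ near $y_0$ by arclength $\sigma\ge 0$ with $y(0)=y_0$ (a one-sided neighbourhood if $y_0$ is an endpoint of $L$), the elementary inequality $|y(\sigma)-y_0|\le\sigma$ (chord no longer than arc) gives $F(y_0,y(\sigma))\ge\frac{A_2(y_0)}{2\sigma}$ for small $\sigma$, whence $\int_{L}F(y_0,y)\,{\rm d}s(y)\ge\int_0^{\varepsilon}\frac{A_2(y_0)}{2\sigma}\,{\rm d}\sigma=+\infty$, and therefore $I(y_0)=+\infty$.

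For $z\in D\setminus L$, I would bound the kernel from above using $|x-y|,|x-z|\ge d_1:=\inf_{x\in\partial\Omega,\,w\in D}|x-w|>0$, so that $I(z)\le\frac{1}{16\pi^2 d_1^2}\int_{L}\tau(y)F(z,y)\,{\rm d}s(y)$. Since $z\notin L$ we have $\mathrm{dist}(z,L):=\inf_{y\in L}|z-y|>0$, and for each $y\in L$ the points $z\ne y$ are distinct, so Lemma 3.3 applies pointwise; moreover, tracking the constant $C_4$ in the proof of Lemma 3.3 shows $F(z,y)\le C(R,c)/|z-y|\le C(R,c)/\mathrm{dist}(z,L)$ uniformly in $y\in L$. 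Together with the boundedness of $\tau$ and the finiteness of the length of $L$, this yields $\int_{L}\tau(y)F(z,y)\,{\rm d}s(y)<+\infty$, hence $I(z)<+\infty$.

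I expect the main obstacle to be the rigorous justification of the formal distributional manipulations — in particular interchanging the order of the $t$-, $x$- and $y$-integrations and evaluating the time integral of a product of two Dirac distributions — and, on the analytic side, confirming that the logarithmic divergence $\int_{L}{\rm d}s(y)/|y-y_0|=+\infty$ genuinely persists for an arbitrary piecewise smooth curve (including the cases where $y_0$ is a corner or an endpoint) as well as verifying the uniform-in-$y$ bound on $F(z,y)$ needed for the finiteness statement; everything else is a routine transcription of the two-dimensional argument.
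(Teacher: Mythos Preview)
Your proposal is correct and follows essentially the same route as the paper: substitute (\ref{line solution}) into (\ref{indicator function}), perform the time integration, bound the kernel using $d_1,d_2$, and then invoke Lemma 3.3 to control $F(z,y)$ on both sides. The paper's own proof is in fact considerably terser than yours --- it simply states the two inequalities and writes ``$=+\infty$'' and ``$<+\infty$'' by appeal to Lemma 3.3, deferring the singularity argument to the parallel proof of Theorem 3.5 --- whereas you spell out the arclength parametrisation, the chord-versus-arc inequality, and the uniform bound on $F(z,y)$ via $C_4$; these are exactly the details the paper leaves implicit.
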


\begin{proof}
The proof is similar to that of Theorem 3.5. Denote $d_1=\inf\limits_{x\in\partial\Omega, z\in D}|x-z|$ and $d_2=\sup\limits_{x\in\partial\Omega, z\in D}|x-z|$. For $z=y_0\in L$, combining with (\ref{line solution}), (\ref{indicator function}) and Lemma 3.3, we have
\begin{equation*}
I(z)\geq\displaystyle{\frac{\tau_0}{16\pi^2d_2^2}\int_{L}  \int_{\partial\Omega} \delta(c^{-1}|x-y|-c^{-1}|x-y_0|) {\rm d}s(x) {\rm d}s(y)}=+\infty.
\end{equation*}
For $z\in D\setminus L$, Lemma 3.3 implies
\begin{equation*}
I(z)\leq\displaystyle{\frac{1}{16\pi^2d_1^2} \int_{L}\tau(y)} \displaystyle{\int_{\partial\Omega}\delta(c^{-1}|x-y|-c^{-1}|x-z|) {\rm d}s(x){\rm d}s(y)}<+\infty,
\end{equation*}
which completes the proof.
\end{proof}

The direct sampling method to solve the inverse source problems with either point sources or sources of form $\lambda(t)\tau(x)\delta_L(x)$ is shown in Algorithm 1.

\begin{table}[!bhtp]
  \small
  \centering
  \begin{tabular}{ll}
    \toprule
    \multicolumn{2}{c}{\textbf{Algorithm 1}. The direct sampling method.}\\
    \midrule
    \textbf{Step 1}~ & Choose a convex region $\Omega$, a signal function $\lambda(t)$, a set of point sources or\\
    & sources on a curve. Collect the wave data $u(x_i,t_k)$ for the sensing points \\
    & $x_i\in\partial \Omega$ $(i=1,\ldots,N_x)$ and the discrete time steps $t_k\in[0,T]~(k=1,\ldots,N_t)$, \\
    & where $T$ is a chosen terminal time.\\
    \textbf{Step 2}~ & Choose a sampling region $D\subset\Omega$ such that the sources are included in $D$\\
    & and $D\cap\partial \Omega=\varnothing$. Select a grid of sampling points $z_l~(l=1,\ldots,N_z)$ in $D$. \\
    & Compute\\
    &~~~~~~~~~~~~~~~~~~$\displaystyle{I(z_l)=\sum_{k=1}^{N_t} \sum_{i=1}^{N_x}u(x_i,t_k) G*\lambda(x_i,t_k;z_l)\Delta s(x_i)\Delta t_k}$,\\
    & where $\Delta s(x_i)$ is the area of the grid cell that contains $x_i$ in it, $\Delta t_k$ is the \\
    & length of the time steps that contains $t_k$ in it.\\
    \textbf{Step 3}~ & Mesh $I(z_l)$ on the sampling grid. The locations of the sources are given by \\
    & the locations of $z_l$ for which $I(z_l)$ are relatively large.\\
    \bottomrule
  \end{tabular}
\end{table}

\section{Numerical experiments}

In this section, numerical experiments are provided based on the direct sampling method and detailed algorithms to get quantized positions and intensities of the sources.

\subsection{The numerical scheme}

We divide the numerical scheme to reconstruct the sources into three steps. The scheme is given for the three-dimensional case, and the case of the two-dimensional cross section is similar.

Step $1$: Choose isometric sampling grid in $D\subset\mathbb{R}^3$ with step size of $\Delta l_z$ and utilize the direct sampling method to obtain $I(z_l)$ and the image of the indicator function.

Step $2$: Get the location of the sources with the local maximums of $I(z_l)$. The specific procedure is provided by the following process:

\noindent Step 2-1. Let $p=1$, chose a reference intensity $T_s>0$ and the total steps $N_p\in\mathbb{N^\ast}$.

\noindent Step 2-2. Find the global maximum value point $z_{k_p}$ of $I(z_{l})$.

\noindent Step 2-3. If $I(z_{k_p})>T_s$, the corresponding sampling point $z_{k_p}$ is regarded as a source point. If $I(z_{k_p})<T_s$, ignore the point and end the procedure.

\noindent Step 2-4. Denote $z_l=(z_{1,l},z_{2,l},z_{3,l})$. Redefine $I(z_l)=0$ if
\begin{equation*}
\max\Big(|z_{1,l}-z_{1,k_p}|, |z_{2,l}-z_{2,k_p}|, |z_{3,l}-z_{3,k_p}|\Big)\leq N_1\Delta l_z,
\end{equation*}
where $N_1\in \mathbb{N^\ast}$ is a parameter that depends on the choice of sampling grid.

\noindent Step 2-5. Redefine $p=p+1.$ If $p \leq N_p,$ go back to Step 2-2. If $p>N_p$, end the procedure.

We can get several reconstructed source points $s_j^r$ $(j=1,2,\ldots,K')$ for some $K'\in \mathbb{N^\ast}$ in this step for both the case of multiple point sources and the case of sources on a curve.

Step $3$: For the case of multiple point sources, to get the intensities of the reconstructed point sources $s_j^r$, we utilize a general numerical solution method (refer to \cite{lan48method}) as follows.

The general numerical solution method is to find the minimum of the residual square cost function
\begin{equation}\label{residual}
g=\sum_{k=1}^{N_t} \sum_{i=1}^{N_x}(u_{i,k}-u(x_i,t_k))^2,
\end{equation}
where $u_{i,k}$ is the numerical approximation of $u(x_i,t_k)$.

Let $u_j(x,t), j=1, 2, \cdots, K'$ be the solution of the wave equation with a single point source $s_j^r$ with the intensity $1$. Denote by $\eta_j$ the unknown intensity of the point source $s_j^r$, we have
\begin{equation}\label{expan}
u_{i,k}=\sum_{j=1}^{K'}\eta_{j}u_j(x_i,t_k).
\end{equation}

Combining (\ref{residual}) and (\ref{expan}), we can get
\begin{equation*}
g=\sum_{k=1}^{N_t} \sum_{i=1}^{N_x}\left(\sum_{j=1}^{K'} \eta_{j}u_j(x_i,t_k)-u(x_i,t_k)\right)^2.
\end{equation*}
Then the general numerical solution method is to find $\eta_{j}~(j=1, 2, \cdots, K')$ that minimizes $g$. We can get a system of linear equations as follows to solve the intensities $\eta_j$ of the point sources:
\begin{equation*}
\frac{\partial g}{\partial\eta_j}=0,\quad j=1,2,\cdots,K'.
\end{equation*}

Step $3'$: For the case of the curve source of the form $\lambda(t)\tau(x)\delta_L(x)$, the position of the curve source is reconstructed by fitting the reconstructed source points  $s_j^r$ obtained in Step $2$ using the polynomial fitting of the MATLAB toolbox ``cftool''.

In this paper, we choose $T_s=2.1$, $N_p=10$ and $N_1=2$ in all the experiments to reconstruct multiple point sources, and choose $T_s=0.2$, $N_p=20$ and $N_1=2$ in all the experiments to reconstruct the sources on a curve.

\subsection{Numerical examples}

We give several numerical examples to demonstrate the effectiveness of the proposed method. In all the following examples, the radiated field is collected for $t\in[0,T]$, in which the terminal time is chosen as $T=15$. The time discretization is
$$t_k=k\frac{T}{N_T},\quad k=0,1,2,\cdots,N_T,$$
where $N_T=64.$ Random noise is added to the data with
$$u_\epsilon=(1+\epsilon r)u,$$
where $\epsilon>0$ is the noise level and $r$ are uniformly distributed random numbers in $[-1,1]$.

Although the signal function is chosen as $\lambda(t)=\delta(t)$ in the theoretical analysis, the $\delta$ distribution is not a suitable choice for the numerical computation. Due to the widespread use of Gaussian signals in practical applications, in this paper, we assume that $\lambda(t)$ is a causal Gaussian modulated sine pulse of the form
\begin{equation*}
 \lambda(t)=
 \left\{
 \begin{array}{ll}
 0, &t<0, \\
 sin(\omega t)e^{-\sigma(t-t_0)^2}, \quad &t\geq 0,
 \end{array}
 \right.
 \end{equation*}
where $\omega>0$ is the center frequency, $\sigma>0$ is the frequency bandwidth parameter and $t_0$ is the time-shift parameter concerning the pulse peak time. In the following examples, we choose $\omega=12$, $\sigma=0.01$ and $t_0=3$.

The sound speed of the homogeneous background medium is chosen as $c=1$. The sensing points are selected as
 \begin{equation*}
x(i,j)=(5sin{\varphi_i}cos{\theta_j}, 5sin{\varphi_i}sin{\theta_j}, 5cos{\varphi_i}),
\end{equation*}
where $\displaystyle{\varphi_i=\frac{2i-1}{32}\pi, i=1,2,\ldots,16}$ and $\displaystyle{\theta_j=\frac{j}{8}\pi, j=0,1,\ldots,15}$. The sampling points are selected as $45\times 45\times 45$ uniform discrete points in $[-2,2]\times[-2,2]\times[-2,2]$.

\newcommand{\myfont}{\textit{\textbf{\textsf{Fancy Text}}}}

\noindent \textbf{Example 1.}
This example is concerned with the reconstruction of point sources on a two-dimensional cross section with different intensities. The source points are selected as $(1.4, 0.7, 0)$, $(0.1, 1.4, 0)$, $(-1.5, 1.1, 0)$, $(-1, -1, 0)$ and $(1, -0.7, 0)$ with relative intensities $3$, $2$, $4$, $3$ and $4$, respectively.  The reconstruction utilizing the direct sampling method is shown in Figure \ref{dian5tu}. The numerical scheme proposed in Section 4.1 is utilized to reconstruct the positions and intensities of the point sources. See Table \ref{dian5b} for the specific reconstructed data.

\begin{figure}[!ht]
 \center
 \scriptsize
 \begin{tabular}{ccc}
  \includegraphics[width=4.6cm]{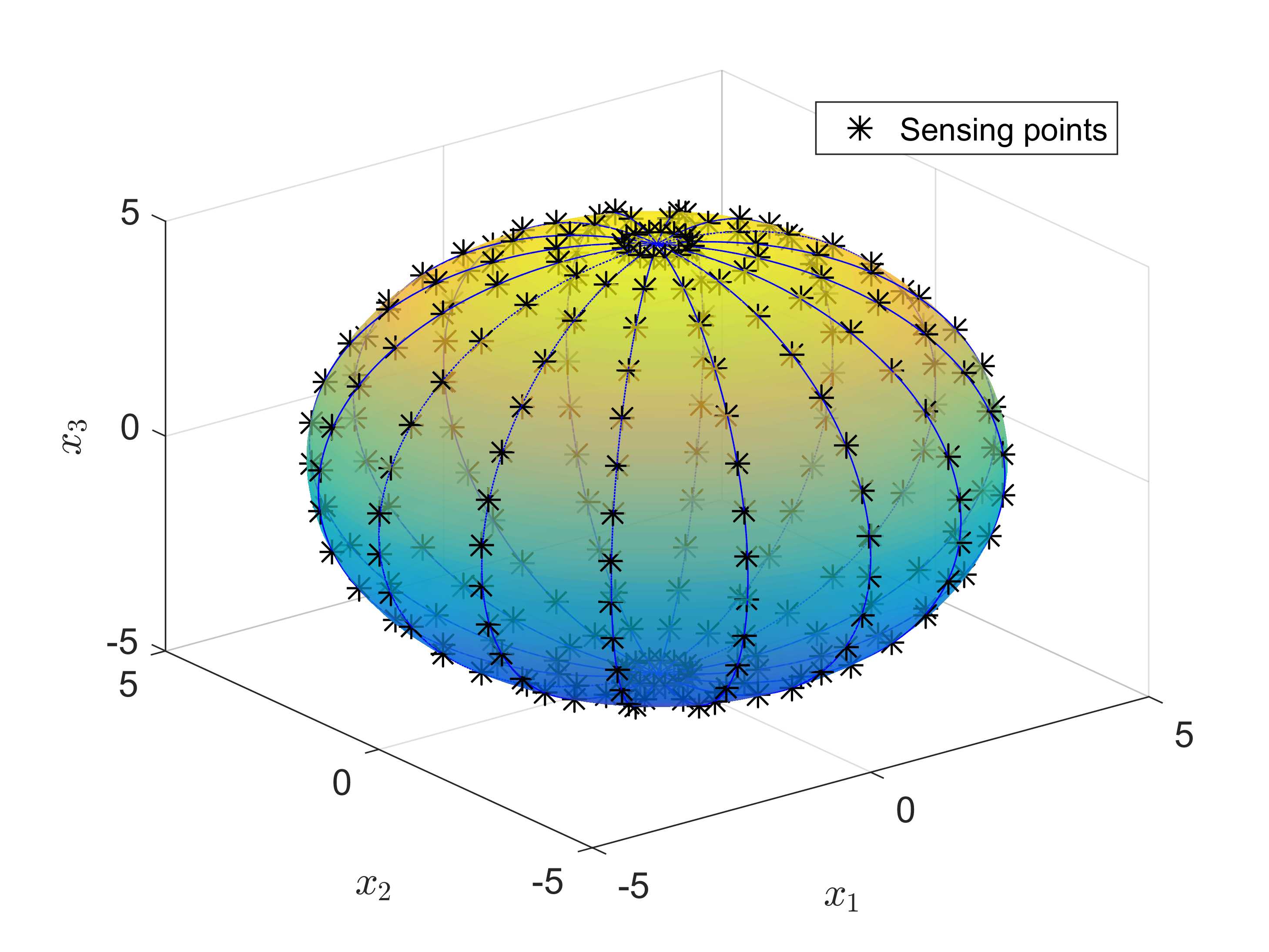} & \includegraphics[width=4.6cm]{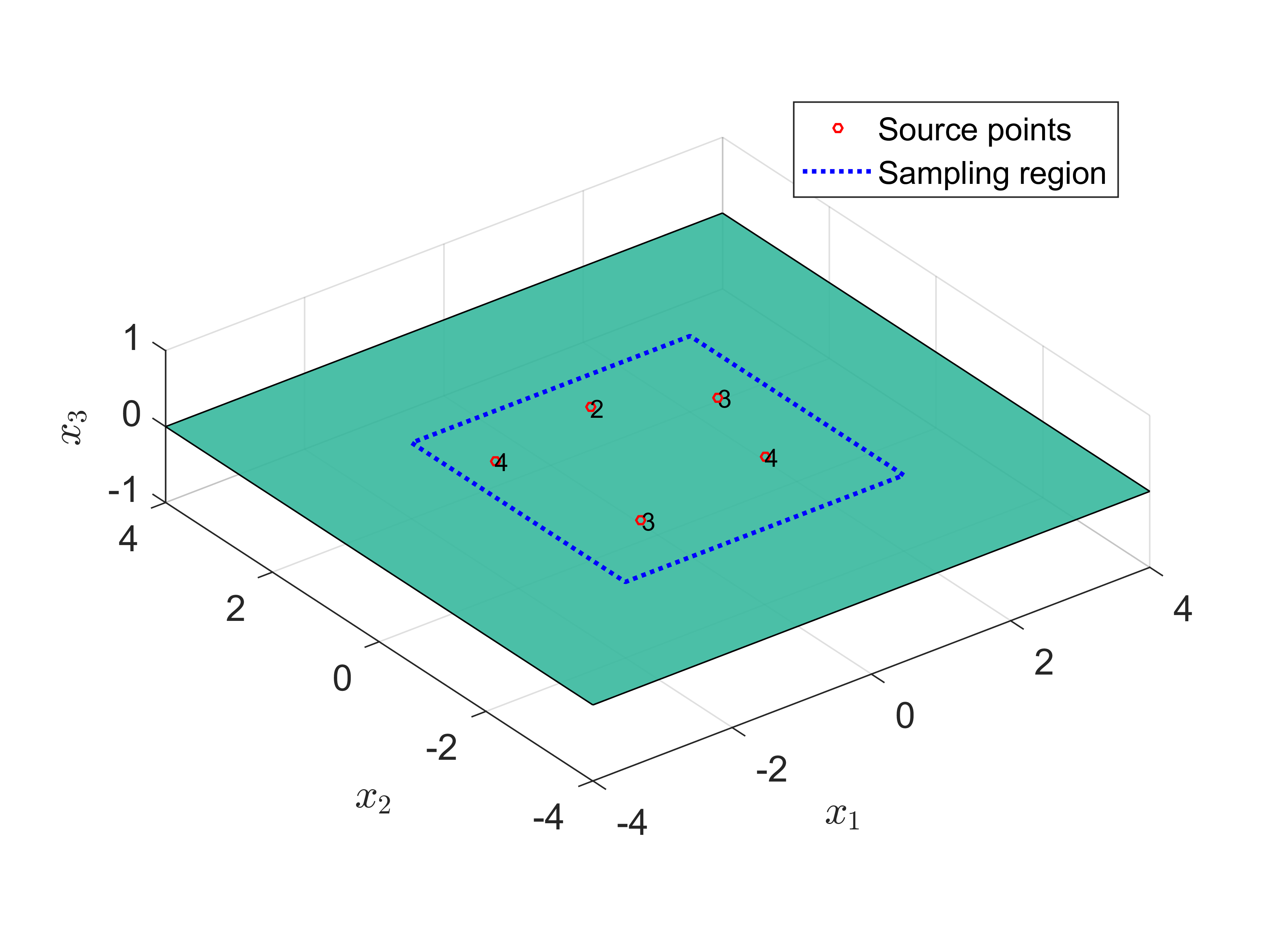} & \includegraphics[width=4.6cm]{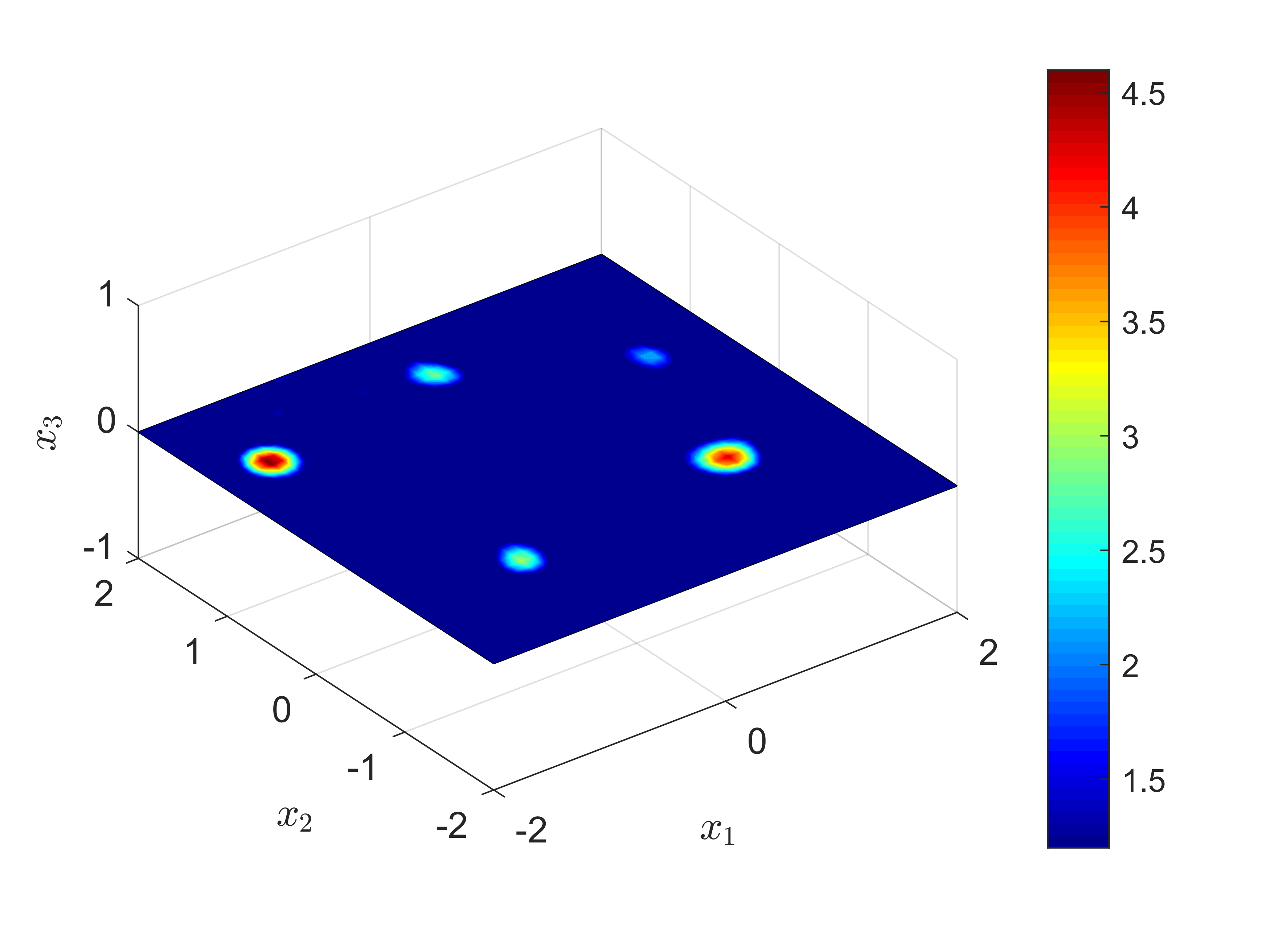} \\
  (a) & (b) & (c) \\
  \includegraphics[width=4.6cm]{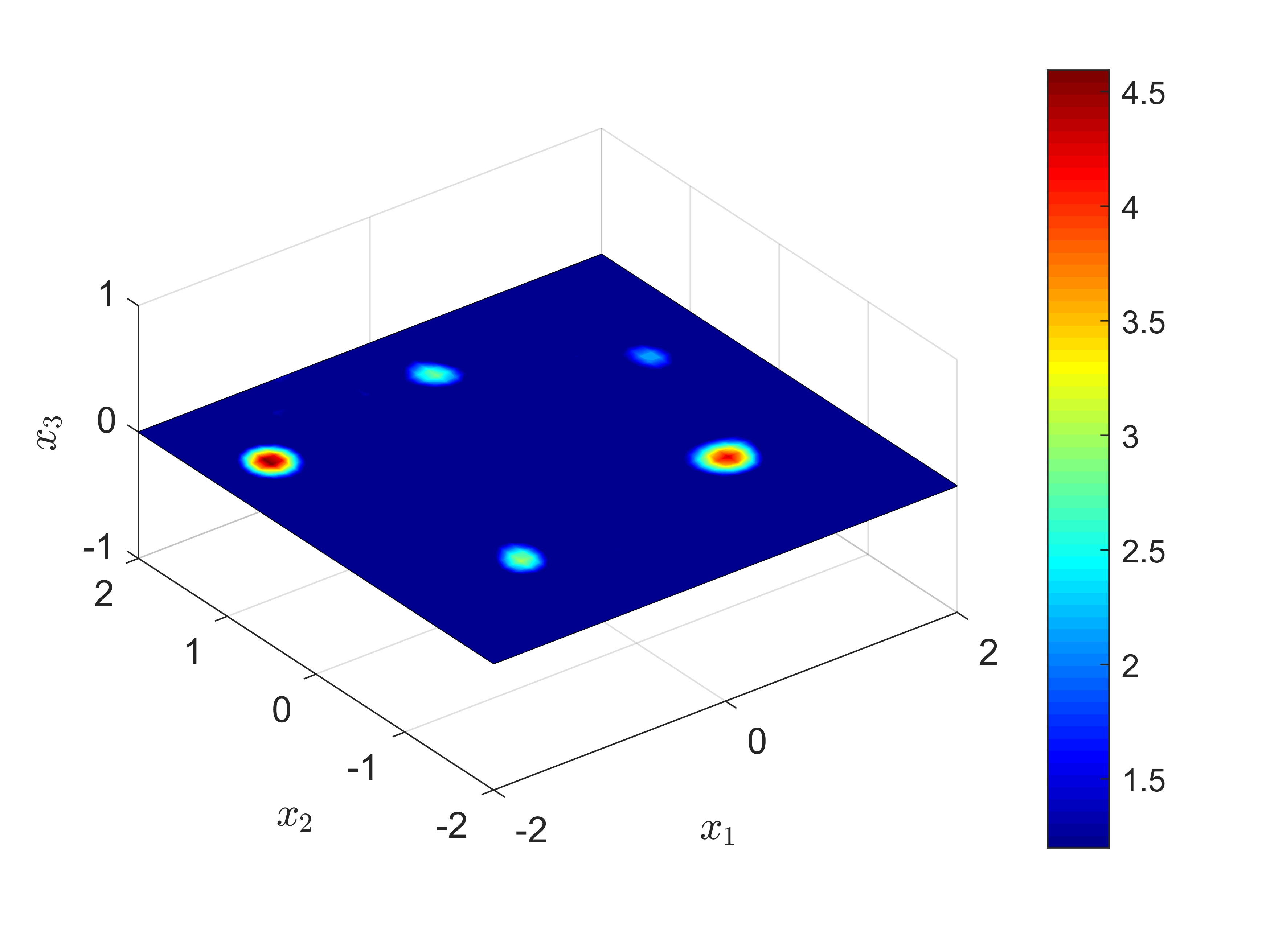} & \includegraphics[width=4.6cm]{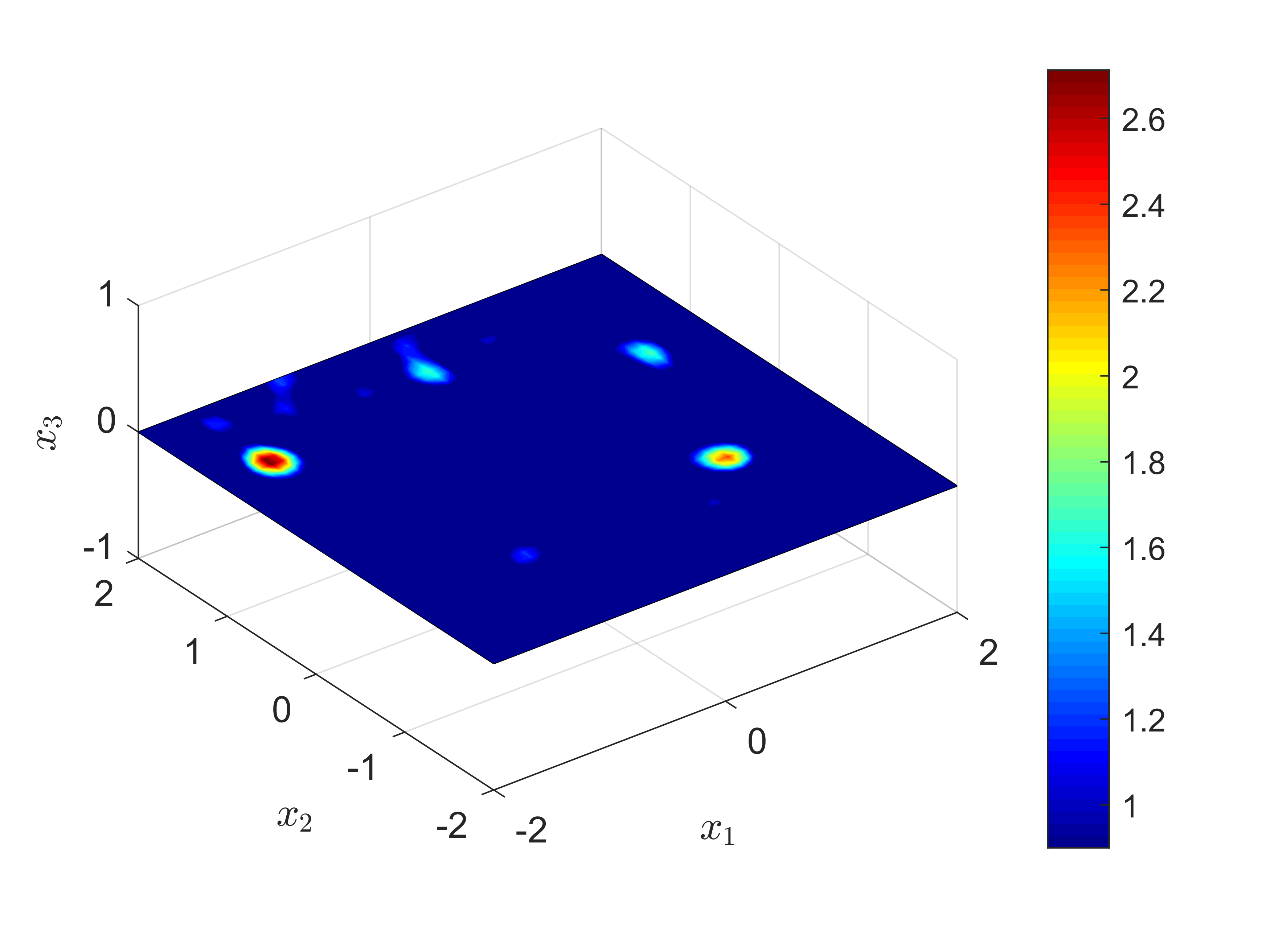} \\ 
  (d) & (e) \\
 \end{tabular}
 \caption{Reconstruction of 5 point sources with different intensities. (a) Location of the sensors. (b) Sketch of the example. (c) Reconstruction with all the sensors, $\epsilon=10\%$ (d) Reconstruction with all the sensors, $\epsilon=5\%$. (e) Reconstruction with the left half of the sensors, $\epsilon=5\%$.}
\label{dian5tu}
 \vspace{-0.5em}
\end{figure}

\begin{table}
\centering
\caption{Specific reconstructed data of Example 1.}
\footnotesize
\begin{tabular}{@{}clll}
\br
 & & The actual & Reconstruction with all \\
No. &Items & point sources & the sensors, $\epsilon=5\%$ \\
\mr
1 & Location & $(1.4, 0.7, 0)$ & $(1.4545, 0.7273, 0)$ \\
  & Intensity & 3 & 2.6141 \\
2 & Location & $(0.1, 1.4, 0)$ & $(0.0909, 1.3636, 0)$ \\
  & Intensity & 2 & 1.8030 \\
3 & Location & $(-1.5, 1.1, 0)$ & $(-1.5454, 1.0909, 0)$ \\
  & Intensity & 4 & 3.8393 \\
4 & Location & $(-1, -1, 0)$ & $(-1, -1, 0)$ \\
  & Intensity & 3 & 2.8258 \\
5 & Location & $(1, -0.7, 0)$ & $(1, -0.7273, 0)$ \\
  & Intensity & 4 & 3.8344 \\
\br
\label{dian5b}
\end{tabular}
\end{table}

\noindent\textbf{Example 2.}
In this example, the reconstruction of the point sources located at $(-1, 1, -1)$, $(0.5, 1.1, -1)$, $(1.2, -1, 0.8)$ and $(0.5, -1, 1.2)$ with respectively the intensities $2$, $3$, $2$ and $3$ is considered. The reconstruction utilizing the direct sampling method is shown in Figure \ref{dian4tu}. See Table \ref{dian4b} for specific reconstructed data.
\begin{figure}[!ht]
 \center
 \scriptsize
 \begin{tabular}{cc}
  \includegraphics[width=6.1cm]{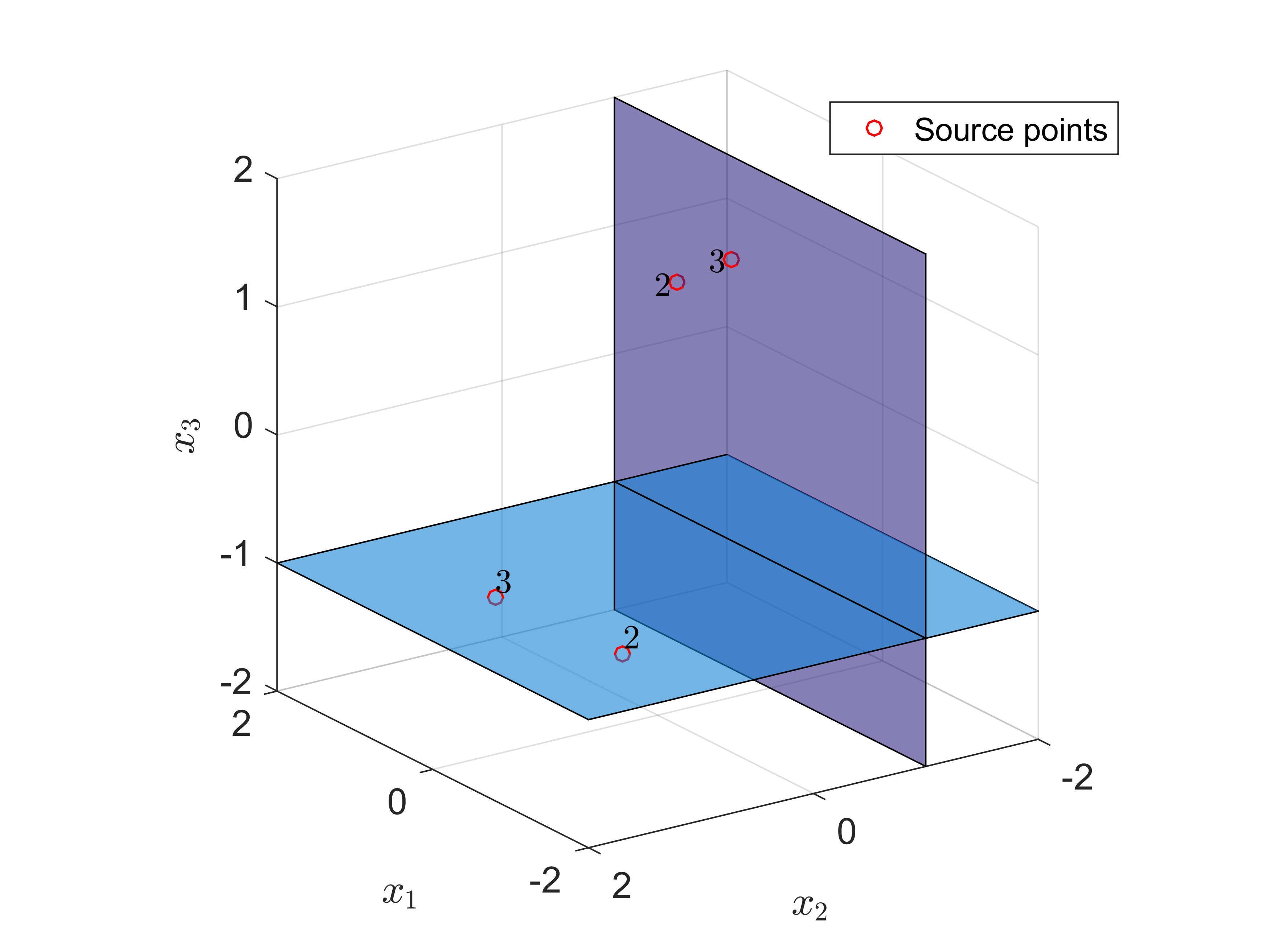} & \includegraphics[width=6.1cm]{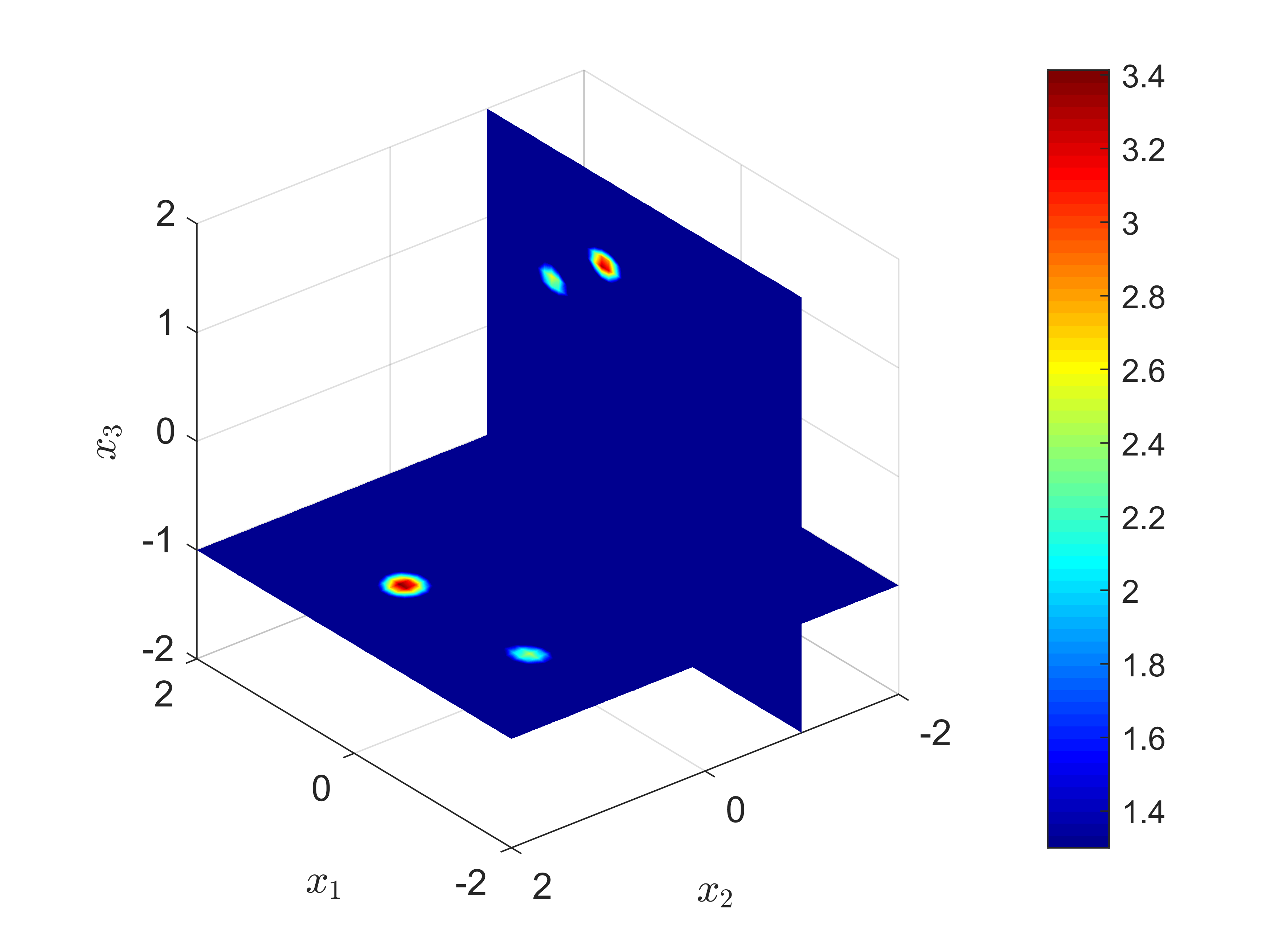} \\
  (a) & (b) \\
  \includegraphics[width=6.1cm]{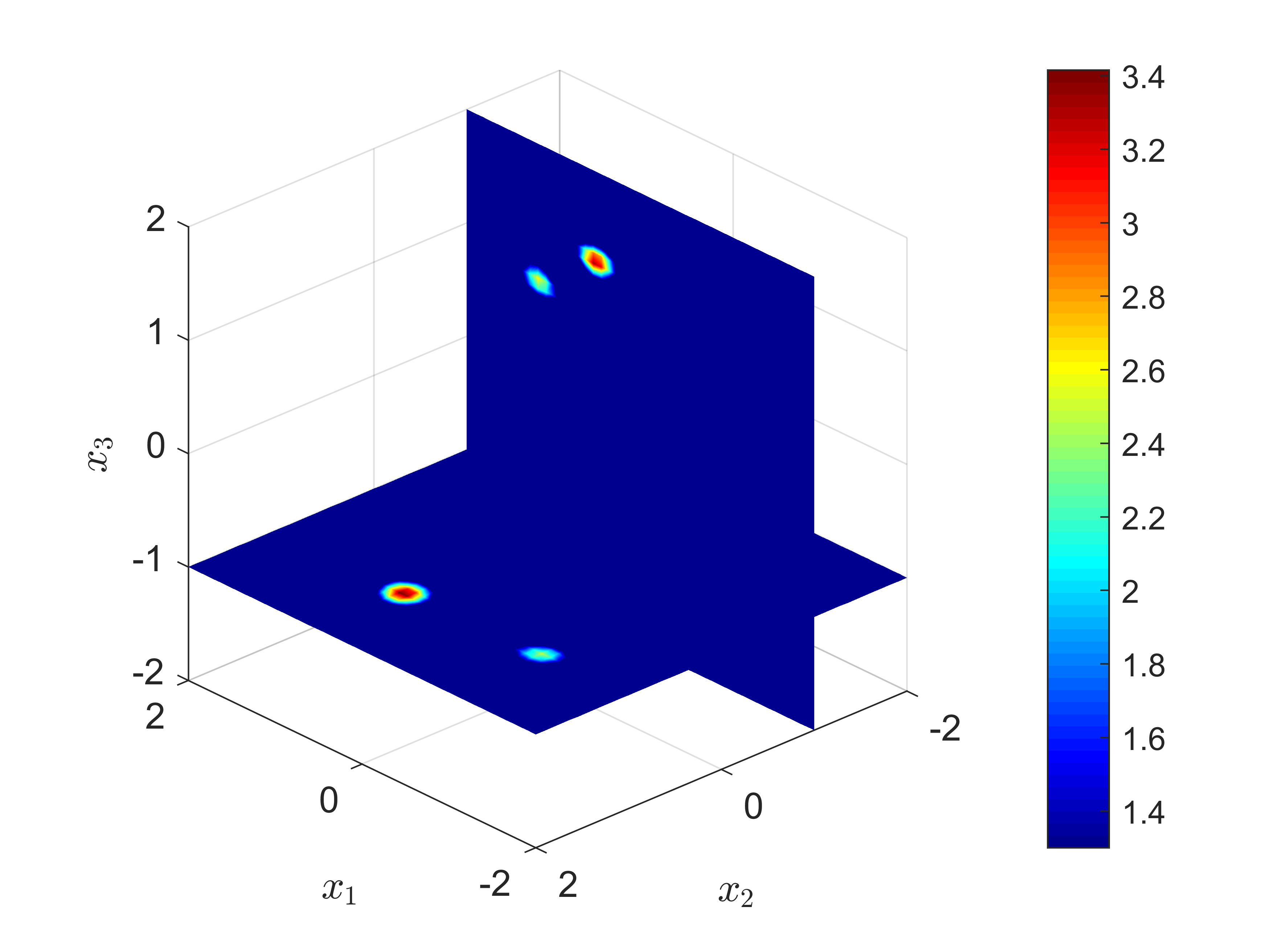} & \includegraphics[width=6.1cm]{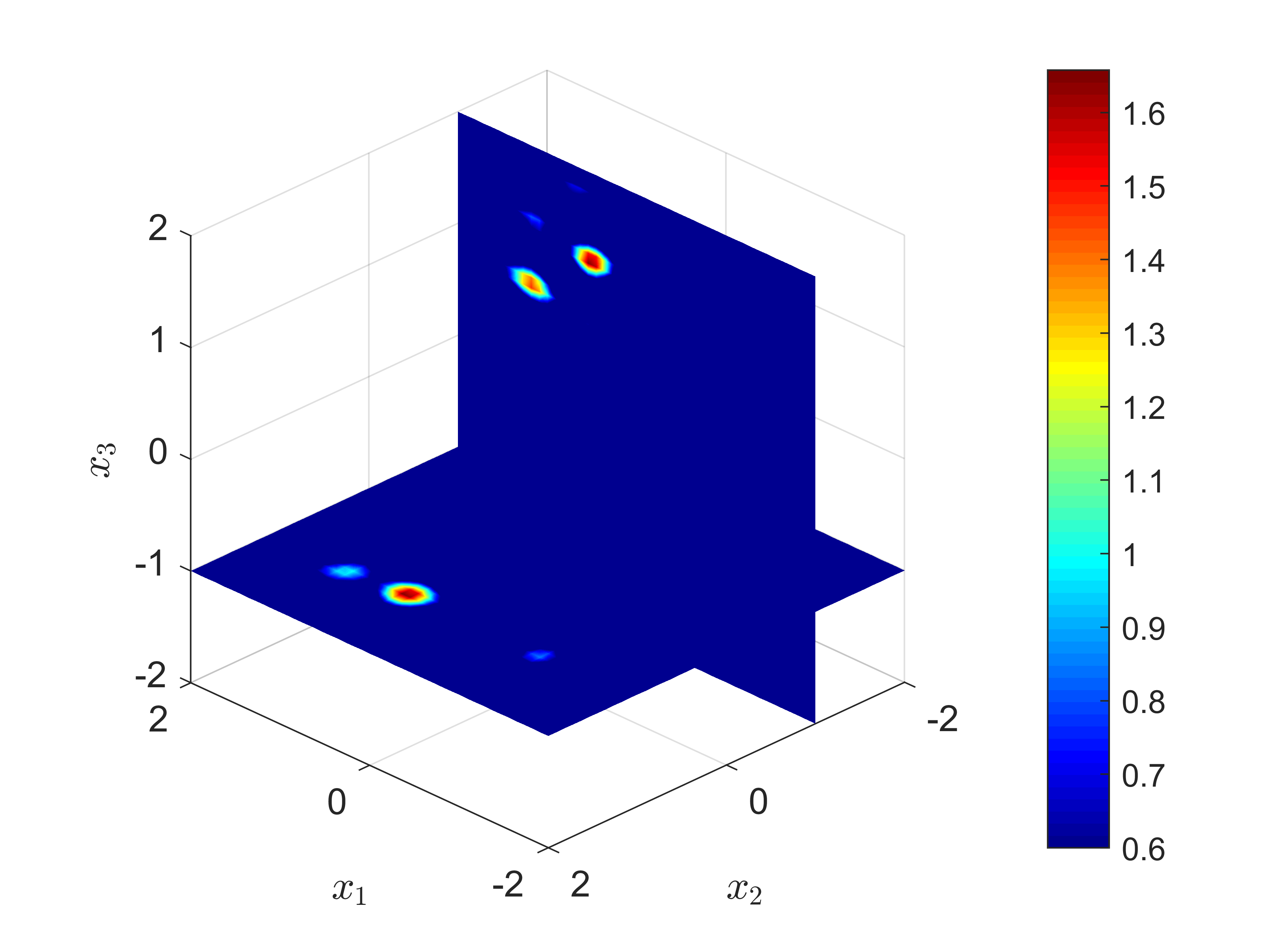} \\
  (c) & (d) \\
 \end{tabular}
 \caption{Reconstruction of 4 point sources with different intensities in $\mathbb{R}^3$. (a) Sketch of the example. (b) Reconstruction with all the sensors, $\epsilon=10\%$.  (c) Reconstruction with all the sensors, $\epsilon=5\%$. (d) Reconstruction with the left half of the sensors, $\epsilon=5\%$.}
\label{dian4tu}
 \vspace{-0.5em}
\end{figure}

\begin{table}
\centering
\caption{Specific reconstructed data of Example 2.}
\footnotesize
\begin{tabular}{@{}clll}
\br
 & & The actual & Reconstruction with all \\
No. &Items & point sources & the sensors, $\epsilon=5\%$ \\
\mr
1 & Location & $(-1, 1, -1)$ & $(-1,1,-1)$ \\
  & Intensity & 2 & 1.9093 \\
2 & Location & $(0.5, 1.1, -1)$ & $(0.5455,1.0909,-1)$ \\
  & Intensity & 3 & 2.9560 \\
3 & Location & $(1.2, -1, 0.8)$ & $(1.1818,-1,0.8182)$ \\
  & Intensity & 2 & 1.8969 \\
4 & Location & $(0.5, -1, 1.2)$ & $(0.5455,-1,1.1818)$ \\
  & Intensity & 3 & 2.7989 \\
\br
\label{dian4b}
\end{tabular}
\end{table}

\noindent\textbf{Example 3.}
In this example, the reconstruction of sources of the form $\lambda(t)\tau(x)\delta_L(x)$ on a two-dimensional cross section is considered. The source curve is chosen as
\begin{equation*}
  L:
  \left\{
  \begin{array}{ll}
  x_2=x_1^2-1, & \quad x_1\in[-0.9, 1.4],\\
  x_3=0 &
  \end{array}
  \right.
 \end{equation*}
with relative intensities $\tau(x)=x_1+3$.

The numerical scheme proposed in Section 4.1 is utilized to reconstruct the sources. In Step 1, the direct sampling method is utilized and the reconstruction can be seen in Figure \ref{meutu}. In Step 2, we get some local maximums of the indicator function $I(z)$ as the reconstructed source points. Note that some of the reconstructed source points, which are inconsistent with the visual results of Figure \ref{meutu}, are ignored while utilizing the fitting of polynomials in Step 3.

In the practice of the fitting of polynomials, we first get the polynomials respectively of the degree $1$, $2$, $3$ and $4$ which fit the data best in a least-squares sense. Then we choose the best fitting polynomial which maximizes
\begin{equation*}
R_{Adj}=1-\frac{n-1}{n-p-1}\cdot\frac{\mathop{\sum}\limits_{i=1}^{n} (y_i-\widehat{y_i})^2}{\mathop\sum\limits_{i=1}^{n} (y_i-\overline{y})^2},
\end{equation*}
where $n$ is the sample size, $p$ is the number of independent variables, $y_i$ is the true value of the dependent variable, $\widehat{y_i}$ is the predicted value of the dependent variable and $\displaystyle{\overline{y}=\frac{1}{n}\mathop{\sum}\limits_{i=1}^{n}y_i}$.

The fitting result is shown in Table \ref{meub}. According to the value of $R_{Adj}$, we choose the quadratic polynomial fitting in this example. See Table \ref{meub2} for a specific reconstruction of the source curve $L$.

\begin{table}
\centering
\caption{Polynomial fitting results in Example 3.}
\footnotesize
\begin{tabular}{@{}clll} 
\br
Degree & Polynomial fitting result & $R_{Adj}$ \\
\mr
1 & $x_2=0.5109x_1-0.6217$ & 0.3684 \\
2 & $x_2=0.9993x_1^2+0.008412x_1-1.006$ & 0.994 \\
3 & $x_2=0.02451x_1^3+0.9824x_1^2-0.006792x_1-1.001$ & 0.9932 \\
4 & $x_2=-0.03038x_1^4+0.05233x_1^3+1.007x_1^2-0.02195x_1-1.004$ & 0.992 \\
\br
\label{meub}
\end{tabular}
\end{table}

\begin{figure}[!ht]
 \center
 \scriptsize
 \begin{tabular}{ccc}
  \includegraphics[width=4.6cm]{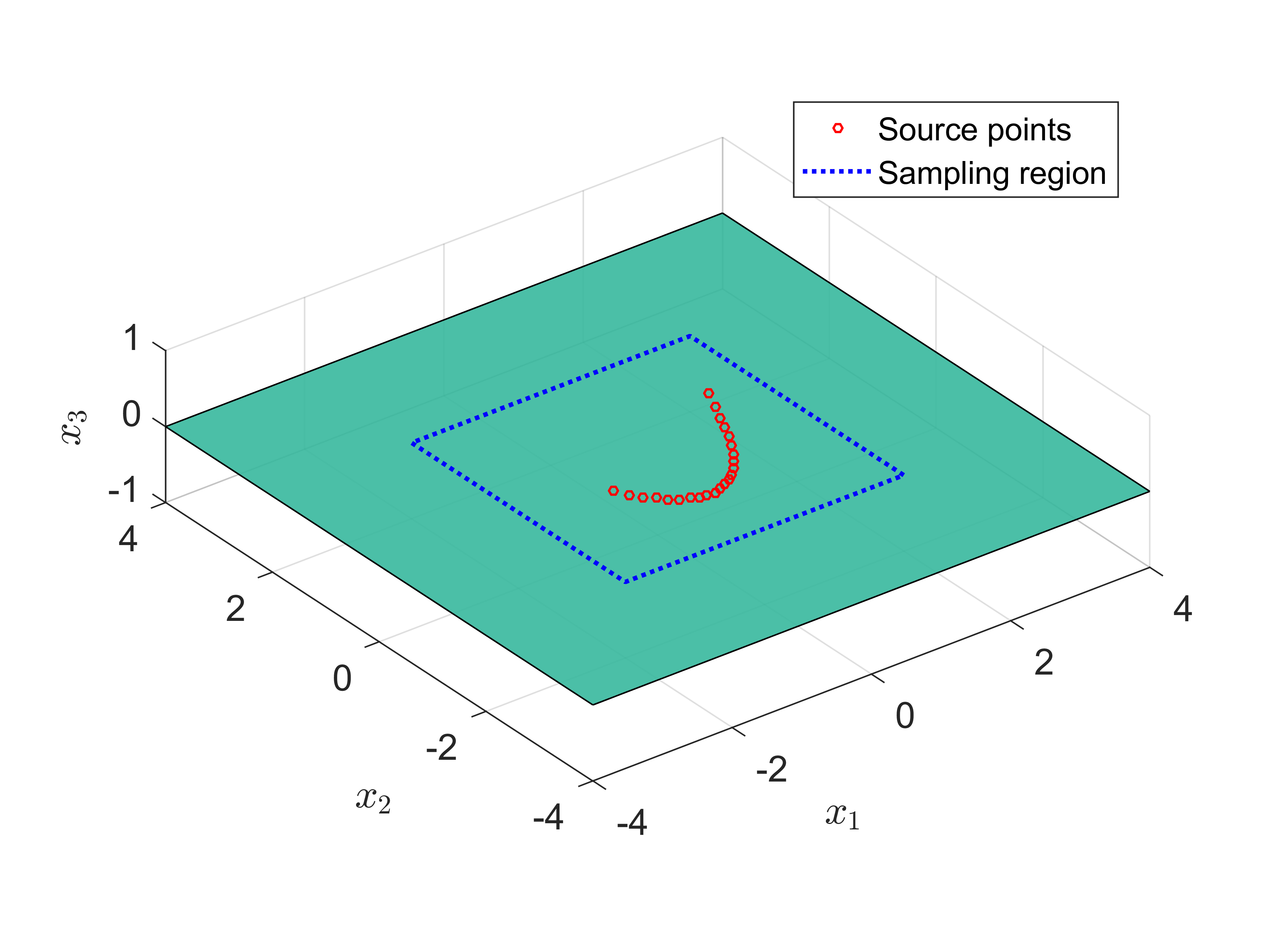} & \includegraphics[width=4.6cm]{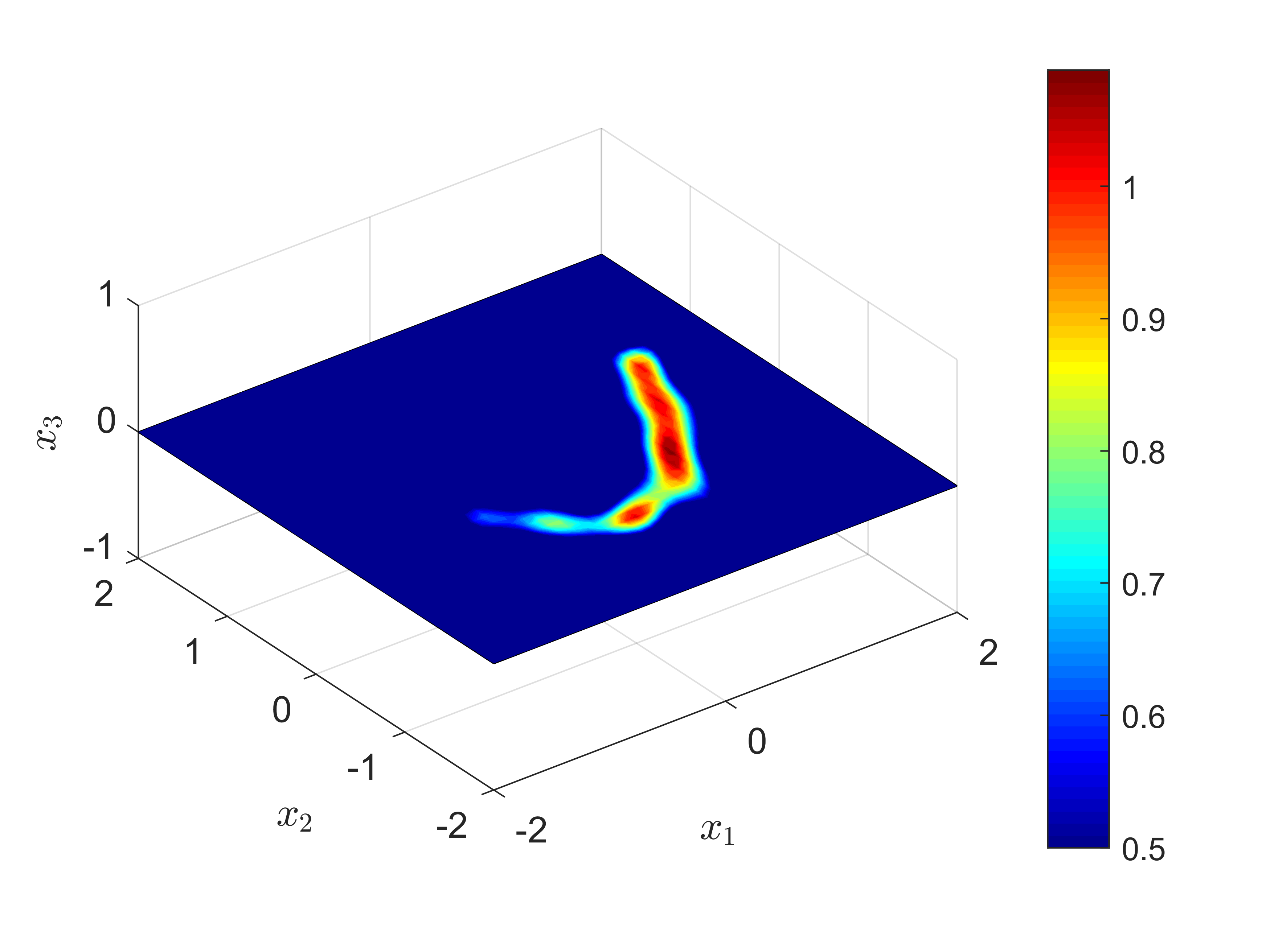} & \includegraphics[width=4.6cm]{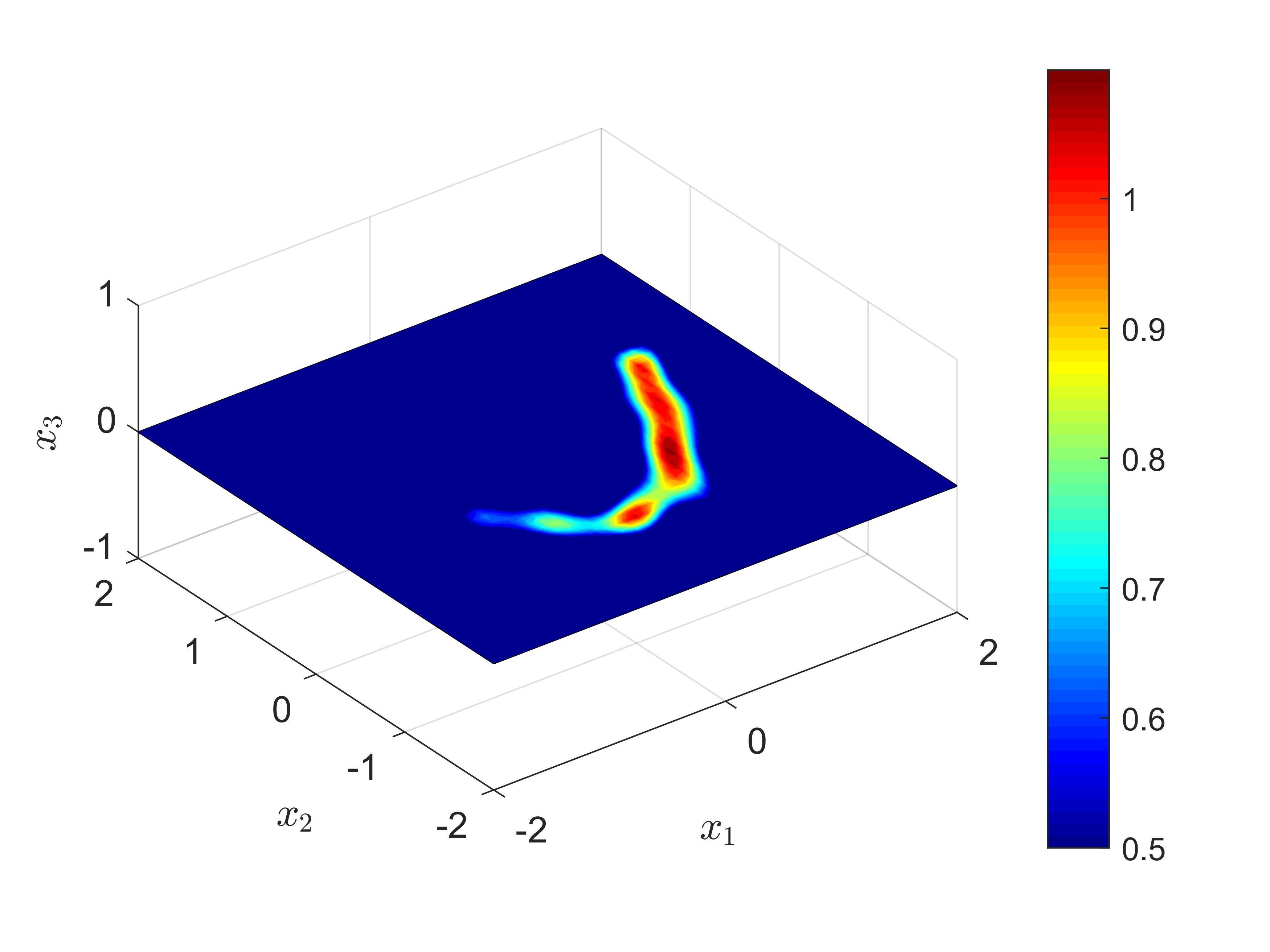} \\
  (a) & (b) & (c)  \\
  \includegraphics[width=4.6cm]{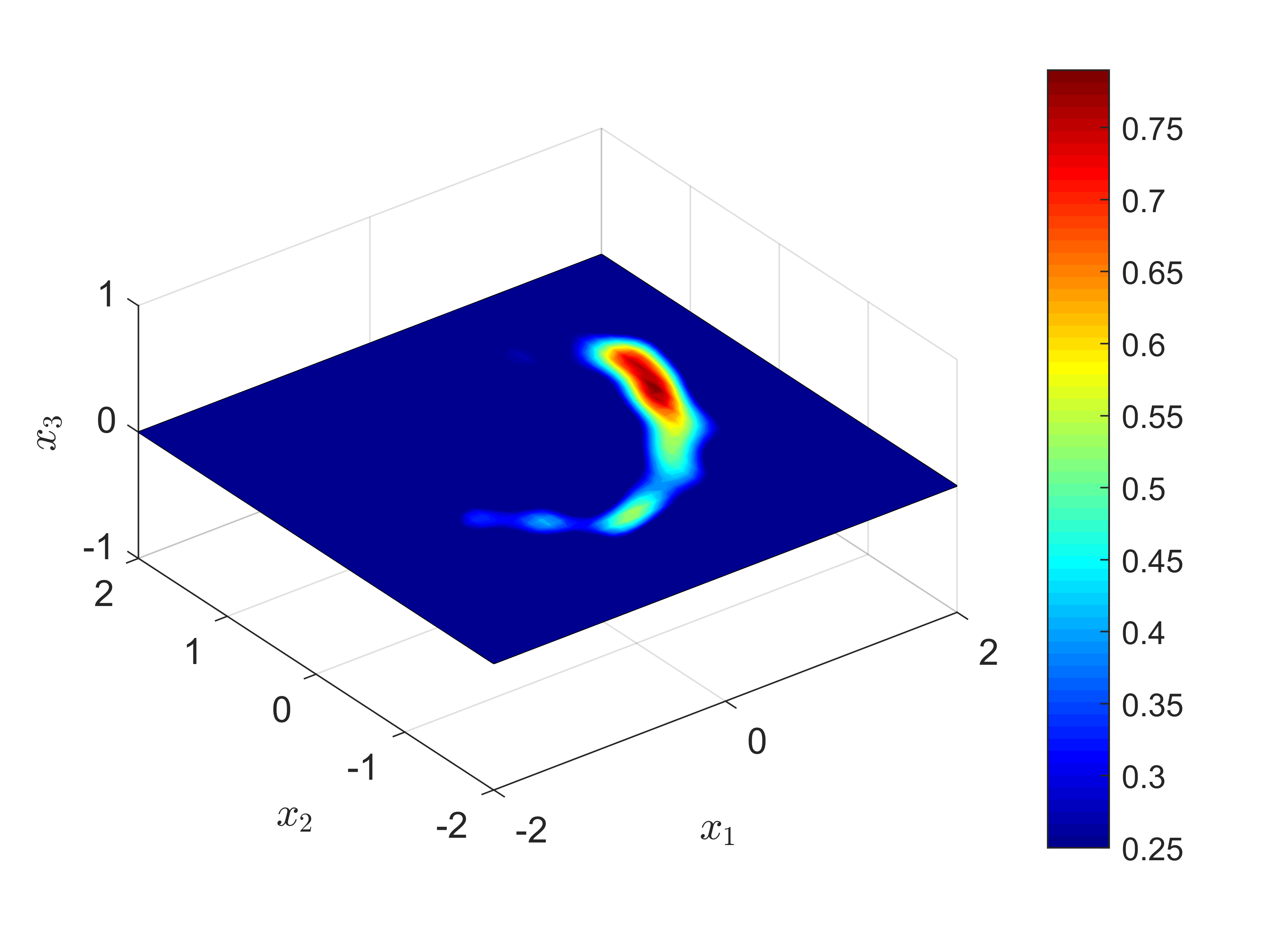} & \includegraphics[width=4.6cm]{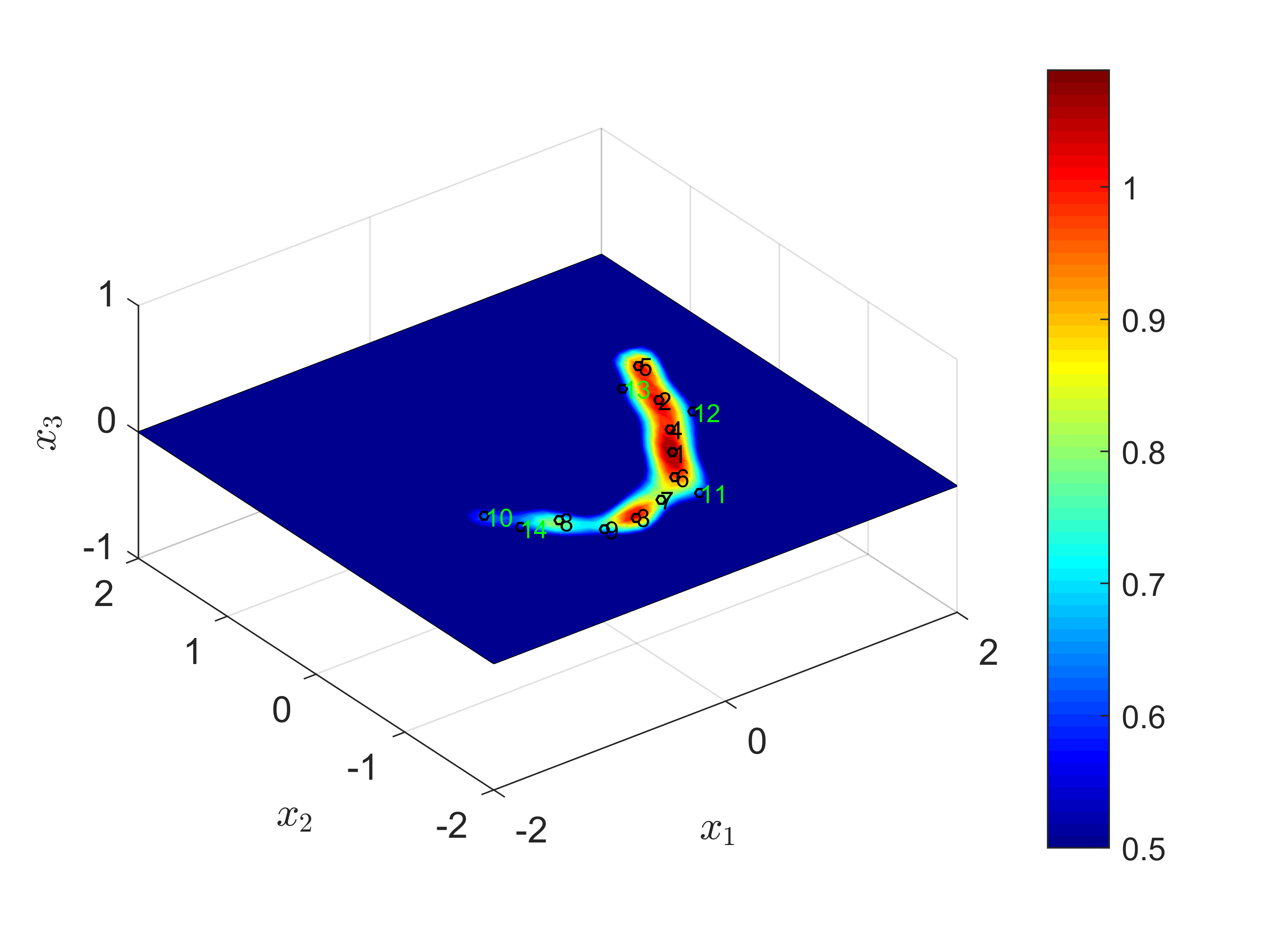} & \includegraphics[width=4.6cm]{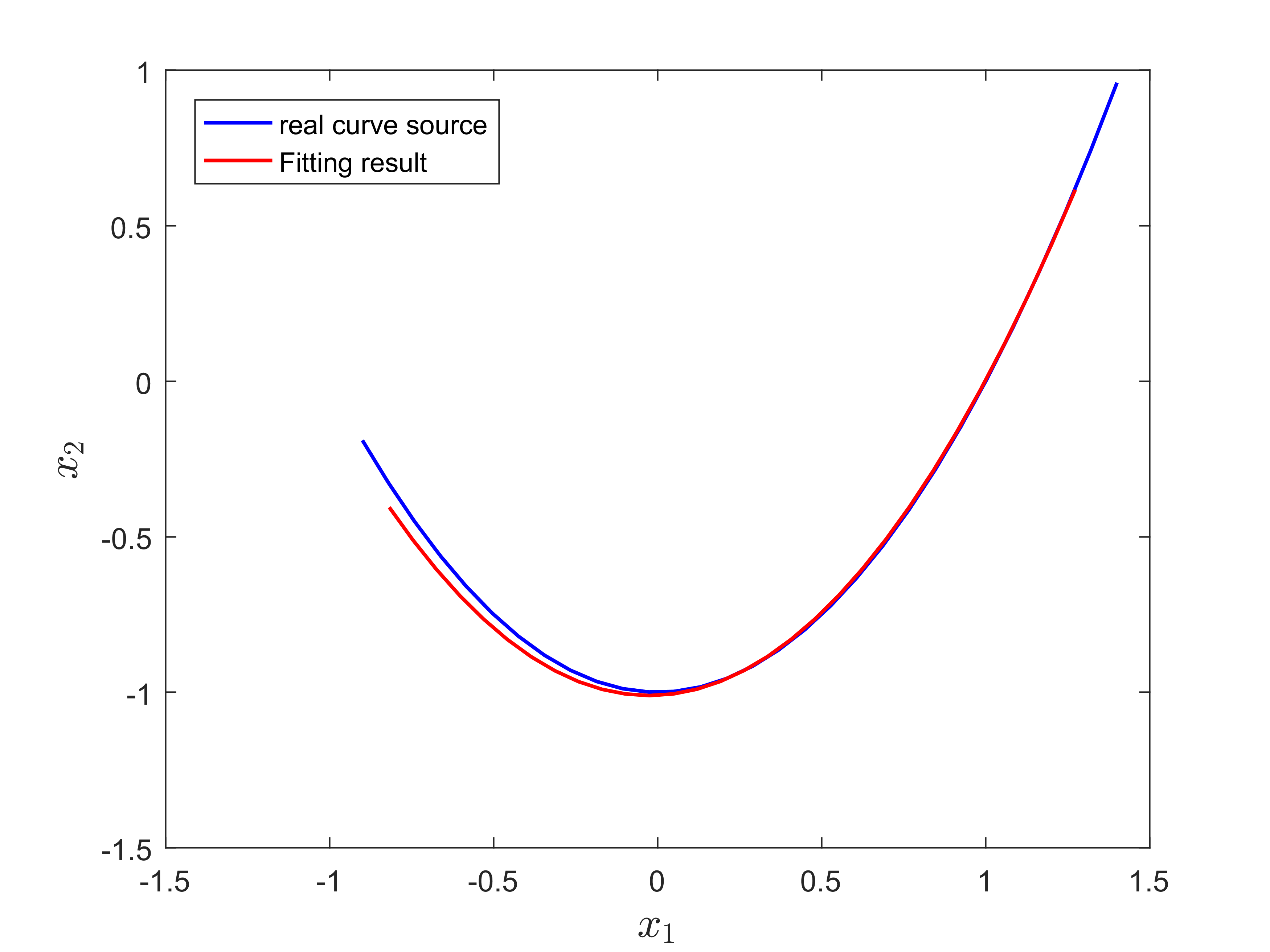} \\
  (d) & (e) & (f) \\
 \end{tabular}
 \caption{Reconstruction of the source of the form $\lambda(t)\tau(x)\delta_L(x)$. (a) Sketch of the example. (b) Reconstruction with all the sensors, $\epsilon=10\%$. (c) Reconstruction with all the sensors, $\epsilon=5\%$. (d) Reconstruction with the left half of the sensors, $\epsilon=5\%$. (e) The approximate source points with all the sensors, $\epsilon=5\%$. (f) Polynomial fitting result.}
\label{meutu}
 \label{meu}
 \vspace{-0.5em}
\end{figure}

\newcommand{\tabincell}[2]{\begin{tabular}{@{}#1@{}}#2\end{tabular}}
\begin{table}
\centering
\caption{Reconstruction of the source curve in Example 3.}
\footnotesize
\begin{tabular}{@{}llll} 
\br
\tabincell{c}{The actual curve source} & \tabincell{c}{Reconstruction with all the sensors, $\epsilon=5\%$} \\
\mr
$x_2=x_1^2-1, x_1\in[-0.9, 1.4]$ & $x_2=0.9993x_1^2+0.008412x_1-1.006, x_1\in[-0.8182, 1.2727]$ \\
$x_3=0$ & $x_3=0$ \\
\br
\label{meub2}
\end{tabular}
\end{table}

\noindent\textbf{Example 4.}
This example is to study the reconstruction of multiple curve sources. The relative intensities are chosen as $\tau(x)=x_1+3$ for all the cases bellow.

Case 1. The source curve is selected as
\begin{equation*}
  L:
  \left\{
  \begin{array}{ll}
  x_2=\frac{1}{x_1}, & \quad x_1\in[-1.4, -0.6]\cup[0.6, 1.4],\\
  x_3=0. &
  \end{array}
  \right.
 \end{equation*}
The reconstruction is shown in Figure \ref{xiaxietu}. The numerical scheme is similar to that in Example 3. Note that Figure \ref{xiaxietu} shows that the sources are composed of two parts. Therefore, the reconstructed point sources are divided into two parts and the fitting of polynomials is utilized separately for two parts. See Table \ref{xiaxieb} for the specific reconstructed data.

\begin{figure}[!ht]
 \center
 \scriptsize
 \begin{tabular}{ccc}
  \includegraphics[width=4.6cm]{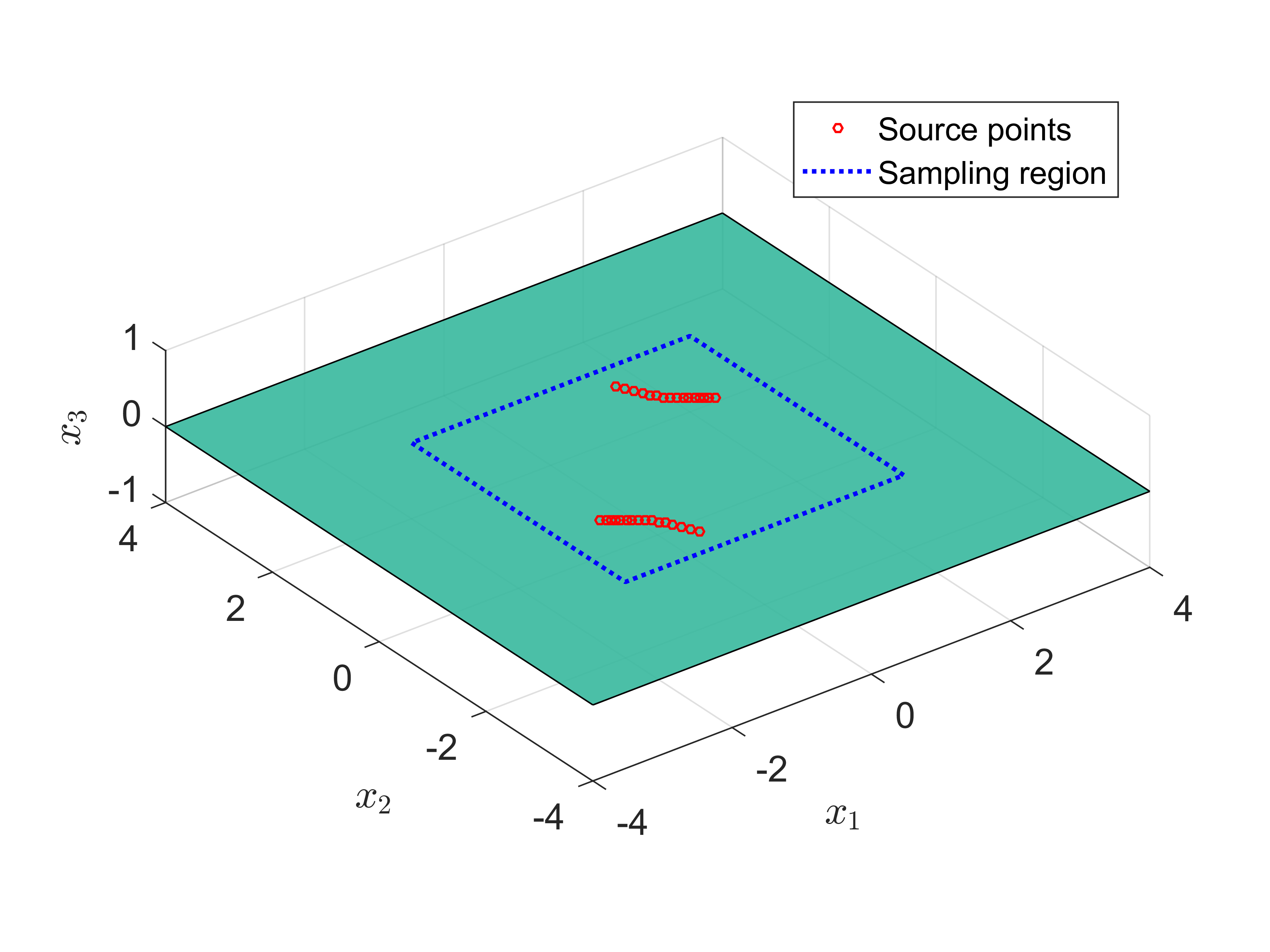} & \includegraphics[width=4.6cm]{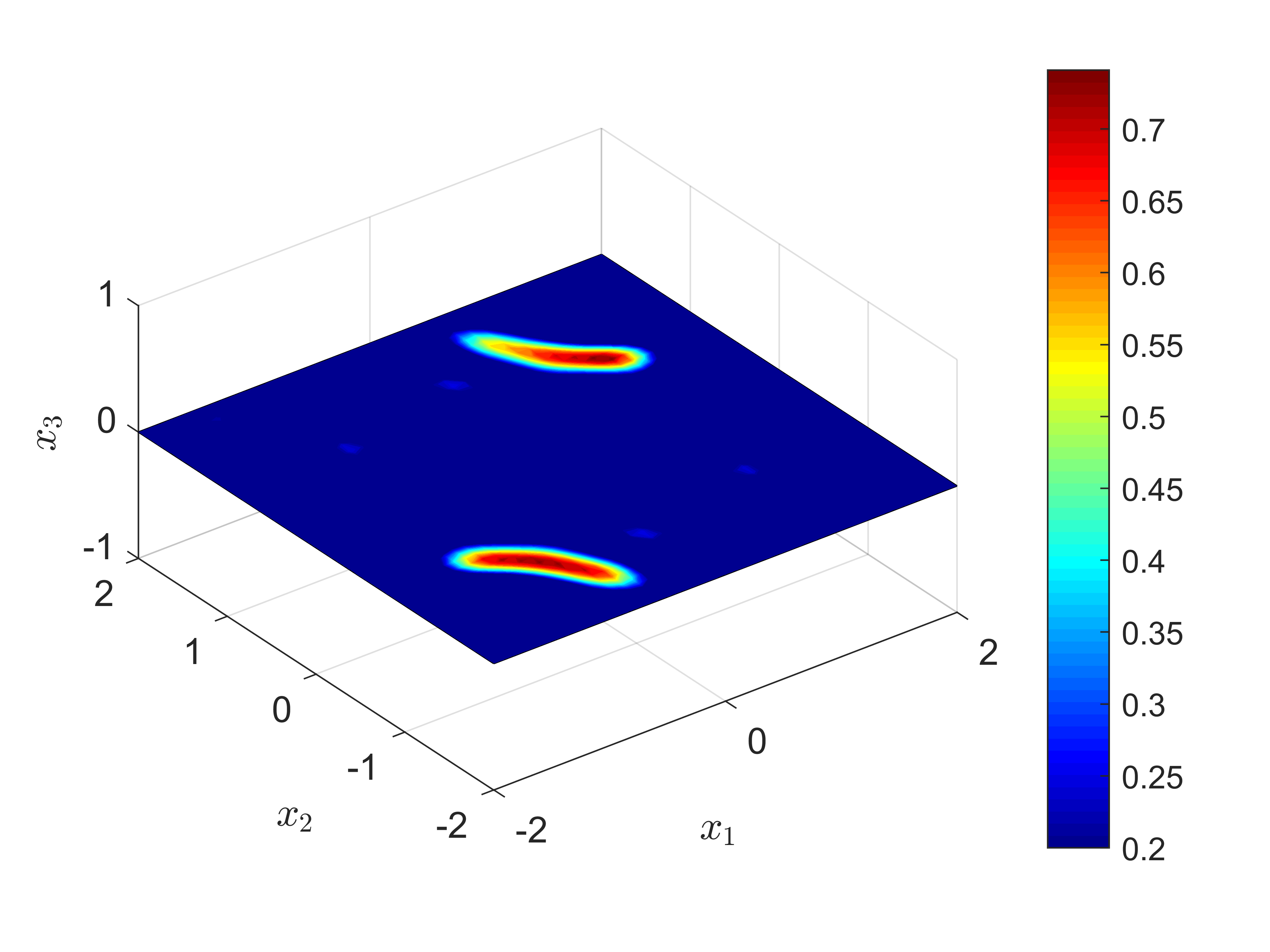} & \includegraphics[width=4.6cm]{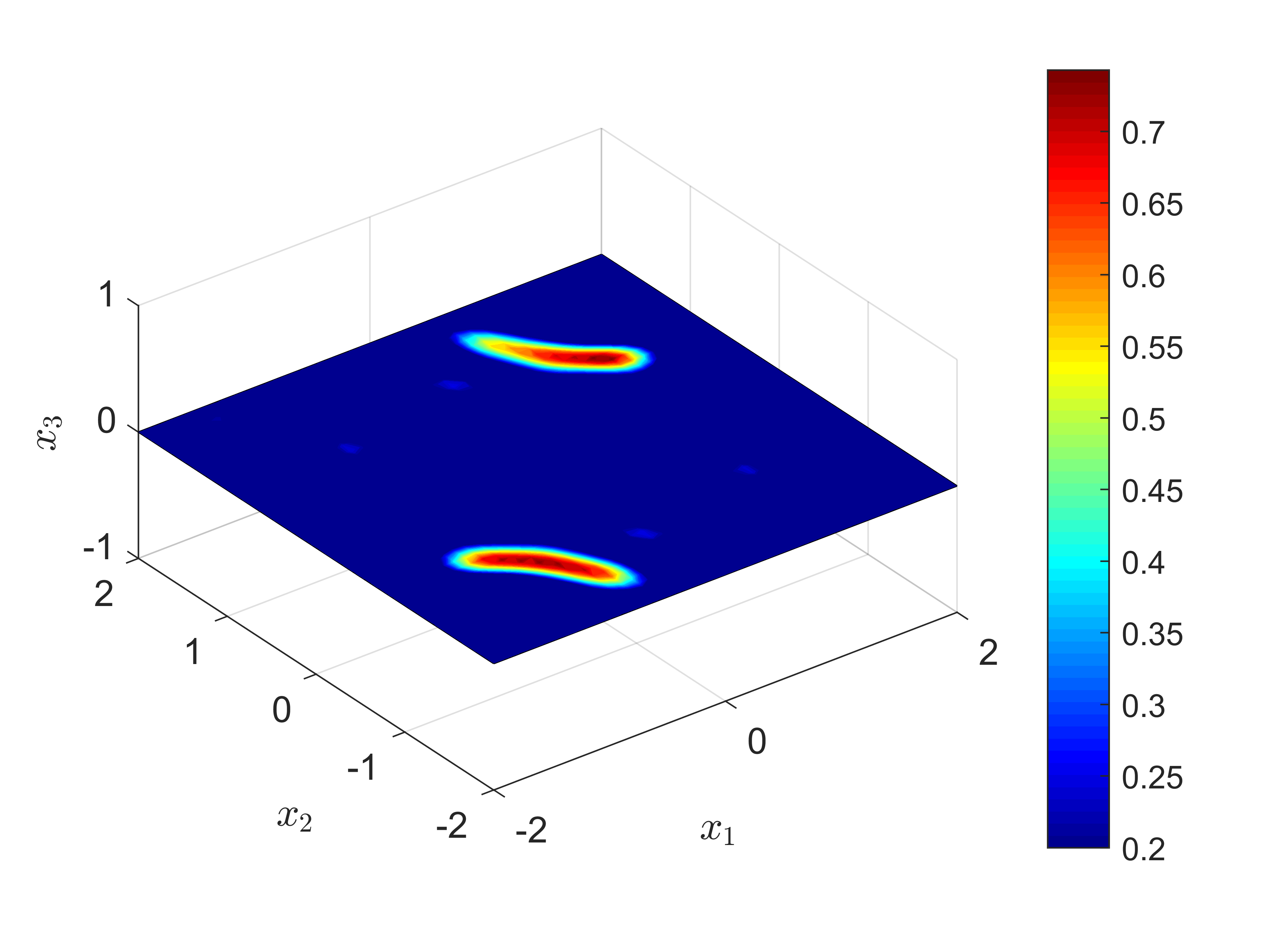} \\
  (a) & (b) & (c) \\
  \includegraphics[width=4.6cm]{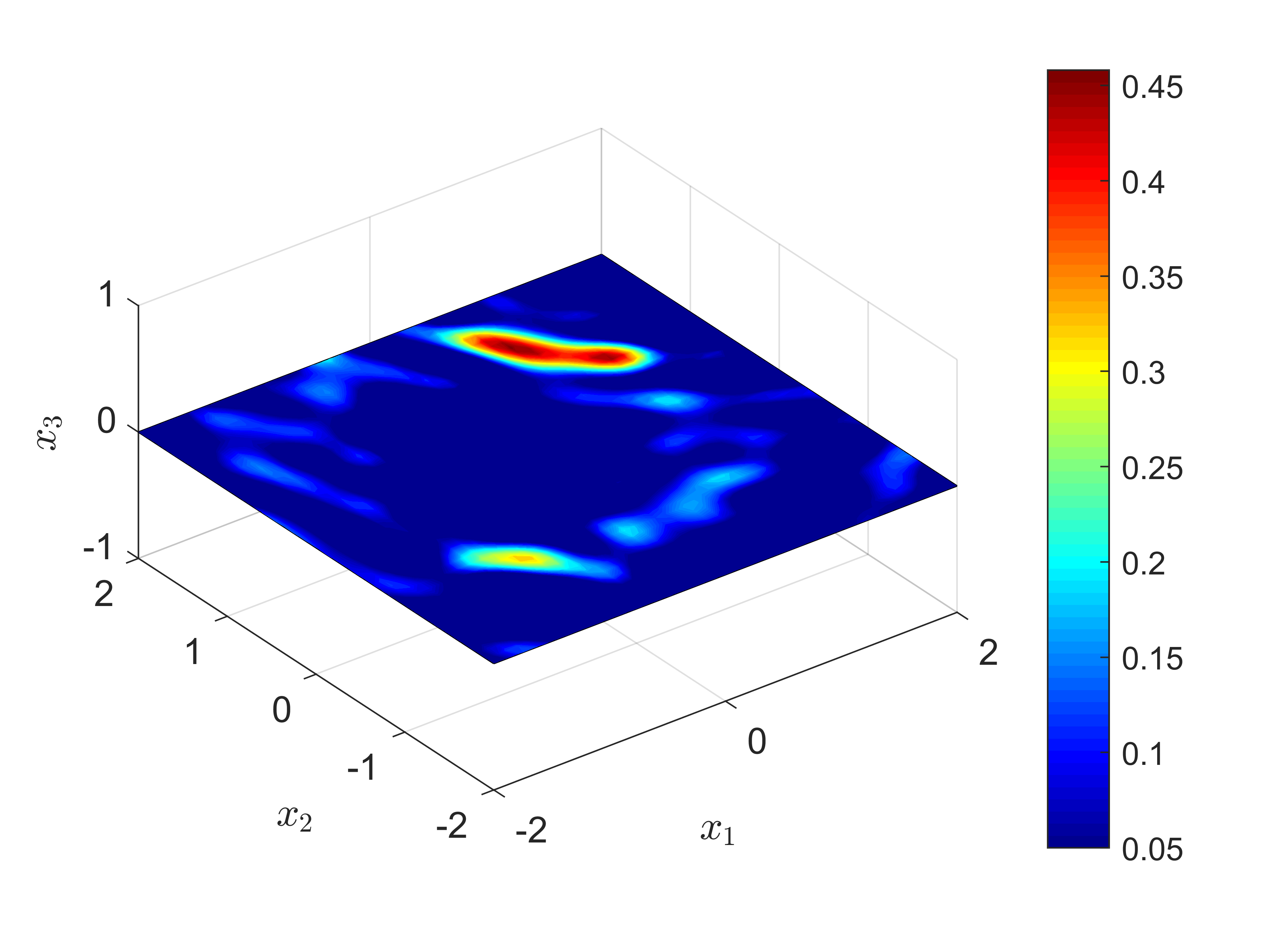} & \includegraphics[width=4.6cm]{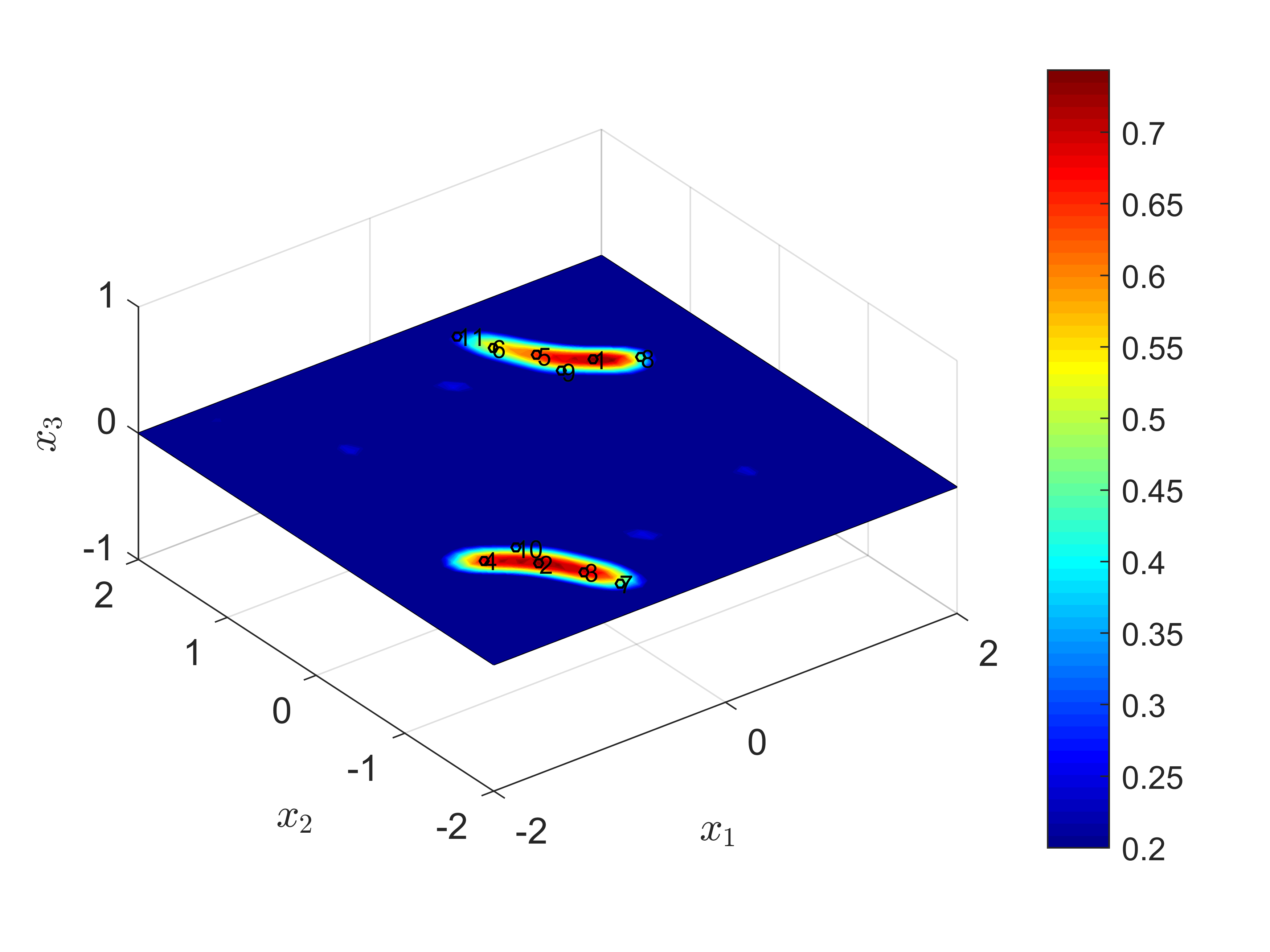} & \includegraphics[width=4.6cm]{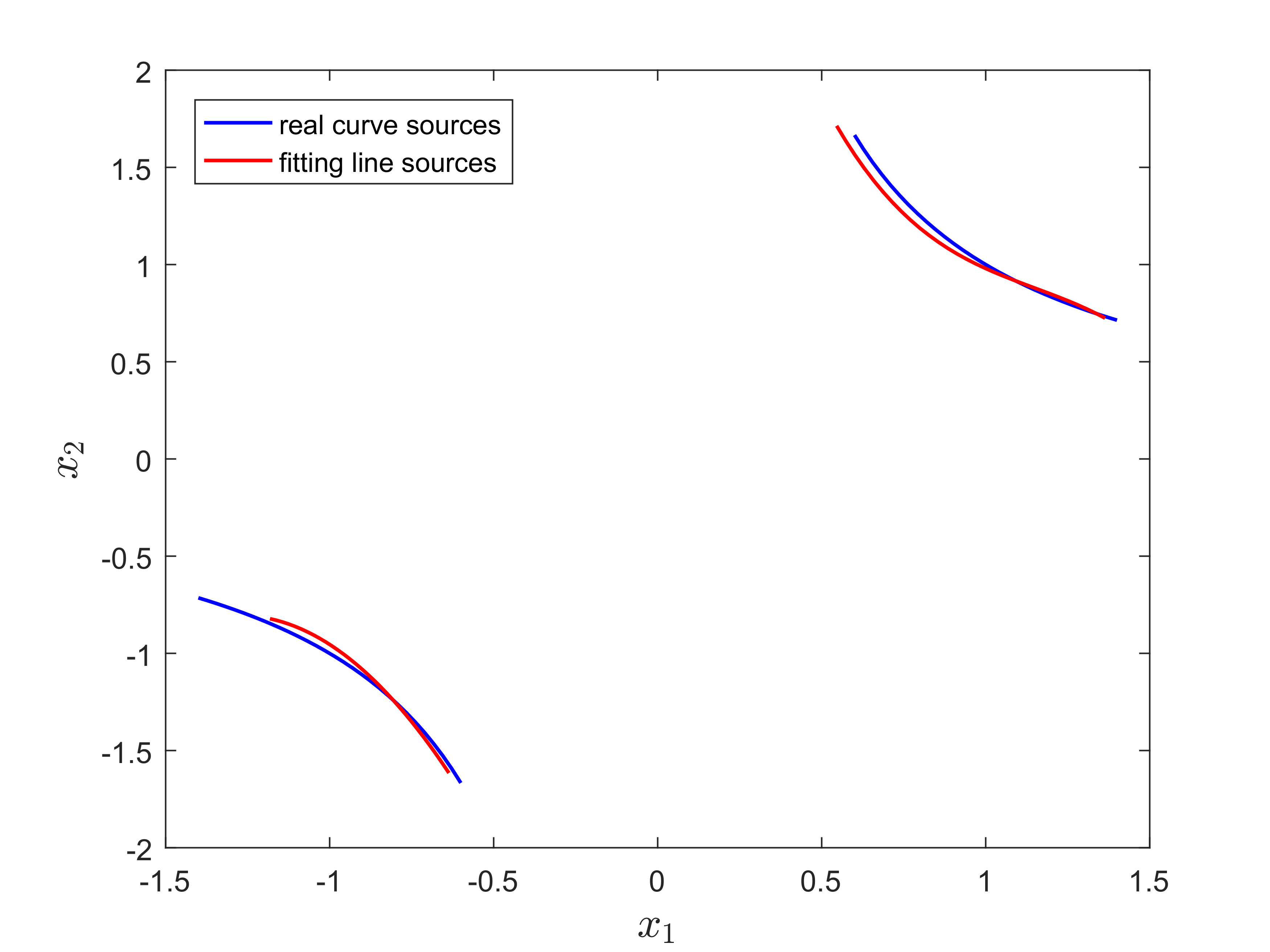} \\
  (d) & (e) & (f)  \\
 \end{tabular}
 \caption{Reconstruction of multiple curve sources, Case 1. (a) Sketch of the example. (b) Reconstruction with all the sensors,  $\epsilon=10\%$. (c) Reconstruction with all the sensors, $\epsilon=5\%$. (d) Reconstruction with the left half of the sensors, $\epsilon=5\%$. (e) The approximate source points with all the sensors, $\epsilon=5\%$. (f) Polynomial fitting result.}
 \label{xiaxietu}
 \vspace{-0.5em}
\end{figure}

\begin{table}
\centering
\caption{Reconstruction of multiple source curves in Example 4, Case 1.}
\footnotesize
\begin{tabular}{@{}llll} 
\br
No. & \tabincell{c}{The actual curve sources} & \tabincell{c}{Reconstruction with all the sensors, $\epsilon=5\%$} \\
\mr
1 & $x_2=\frac{1}{x_1}, x_1\in[-1.4, -0.6]$ & $x_2=-2.001x_1^2-5.093x_1-4.046, x_1\in[-1.1818, -0.6364]$ \\
  & $x_3=0$ & $x_3=0$ \\
2 & $x_2=\frac{1}{x_1}, x_1\in[0.6, 1.4]$ & $x_2=0.794x_1^2-2.579x_1+2.77, x_1\in[0.5455, 1.3636]$\\
  & $x_3=0$ & $x_3=0$ \\
\br
\label{xiaxieb}
\end{tabular}
\end{table}

Case 2. The source curves are chosen as the combination of
\begin{equation*}
  L_1:
  \left\{
  \begin{array}{ll}
  x_2=-2x_1^2+1, & \quad x_1\in[0.2, 1],\\
  x_3=0 &
  \end{array}
  \right.
 \end{equation*}
and
 \begin{equation*}
  L_2:
  \left\{
  \begin{array}{ll}
  x_2=x_1^2-1, & \quad x_1\in[-0.9, 1.4],\\
  x_3=0. &
  \end{array}
  \right.
 \end{equation*}
The reconstruction is shown in Figure \ref{xianjiaochatu}. See Table \ref{xianjiaochab} for the specific reconstructed data.

\begin{figure}[!ht]
 \center
 \scriptsize
 \begin{tabular}{ccc}
  \includegraphics[width=4.6cm]{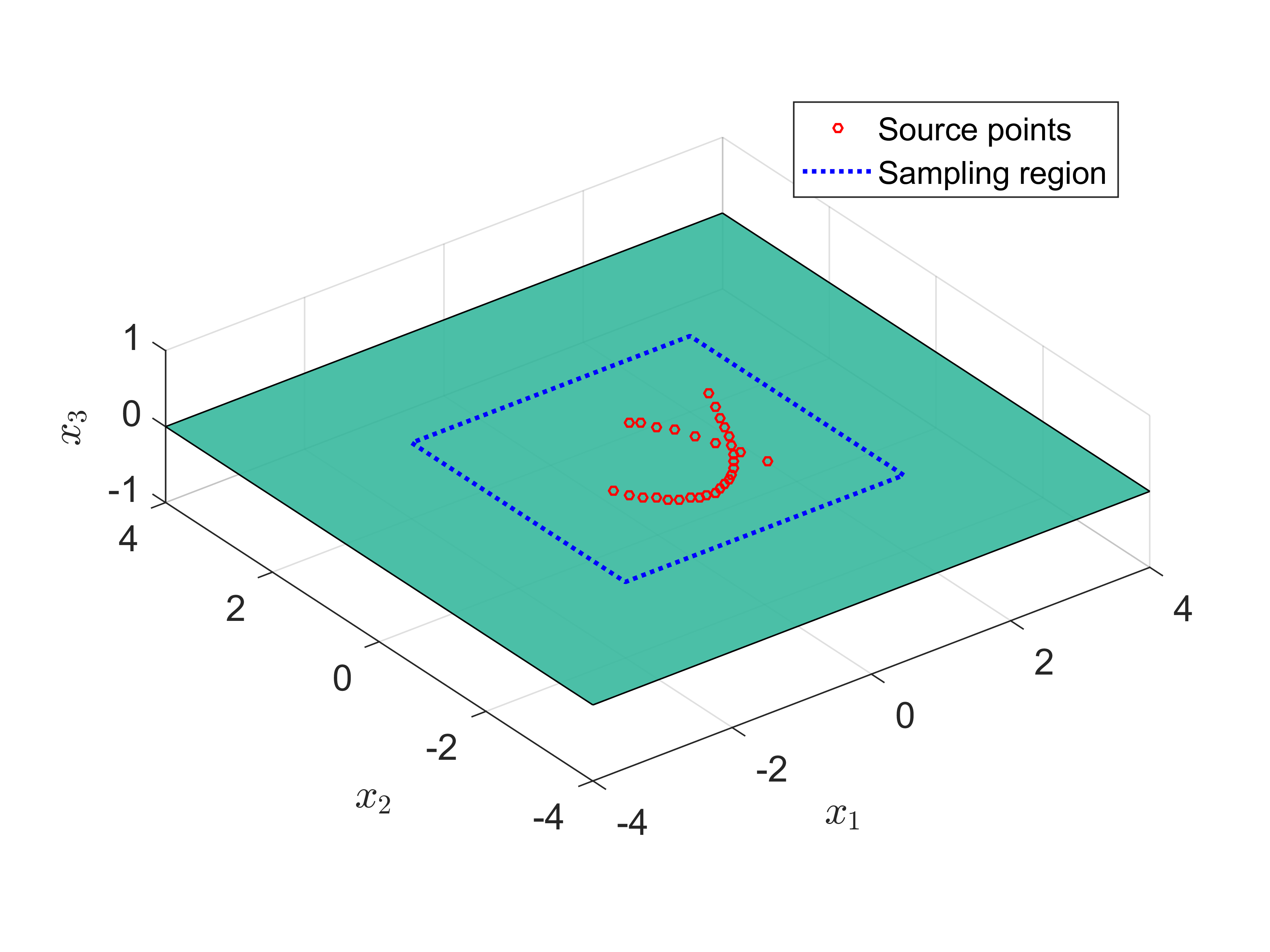} & \includegraphics[width=4.6cm]{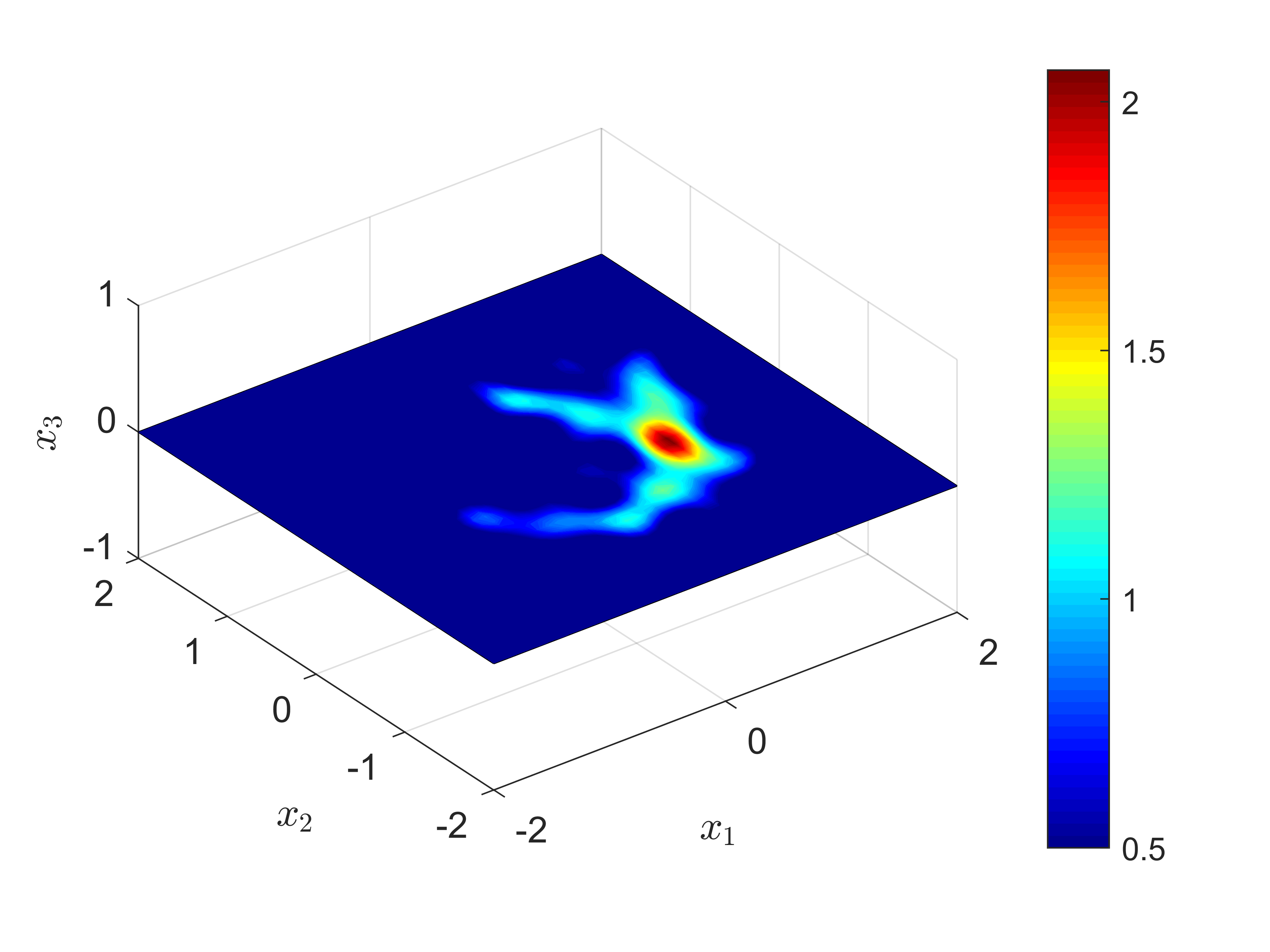} & \includegraphics[width=4.6cm]{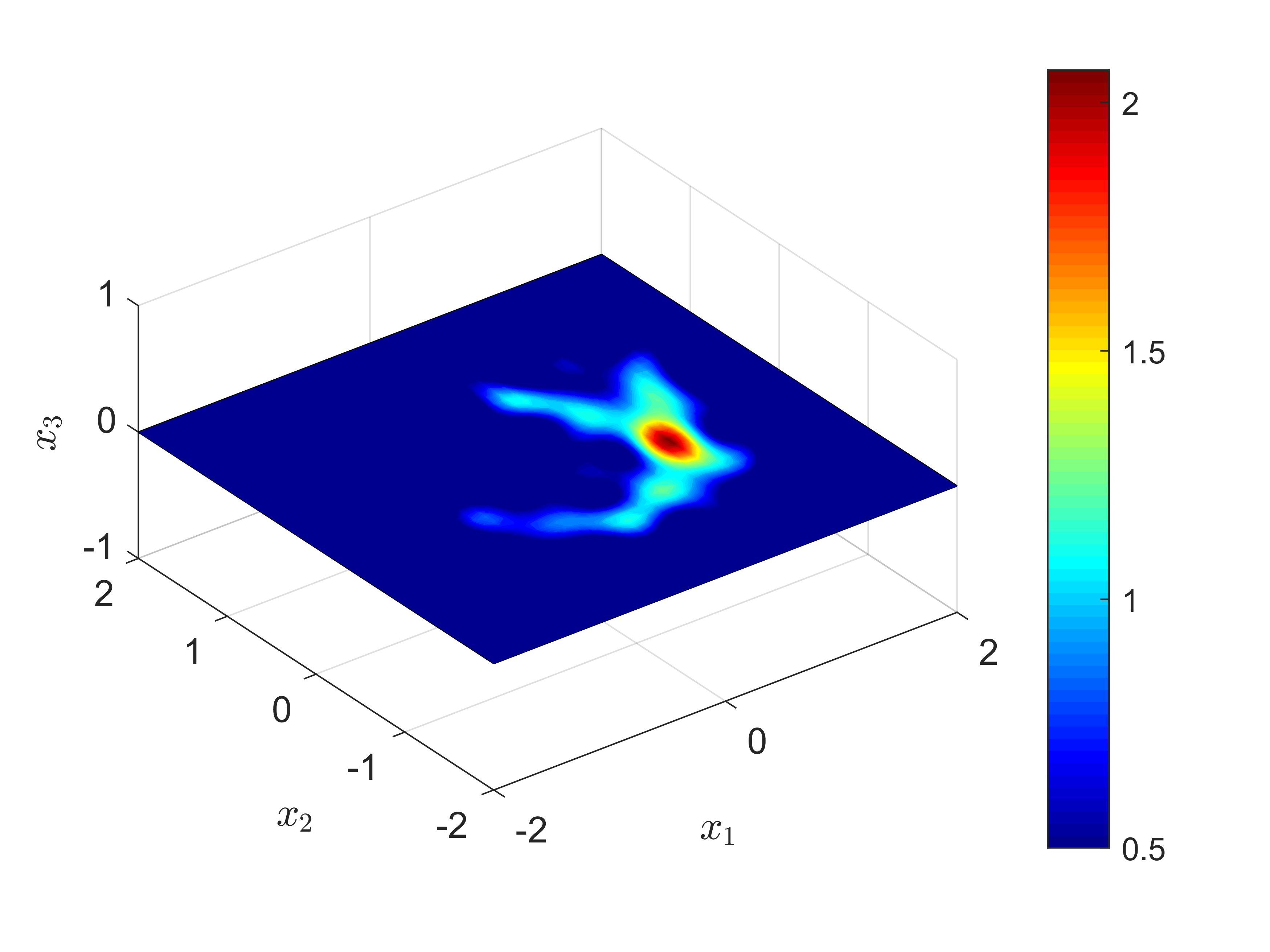} \\
  (a) & (b) & (c) \\
  \includegraphics[width=4.6cm]{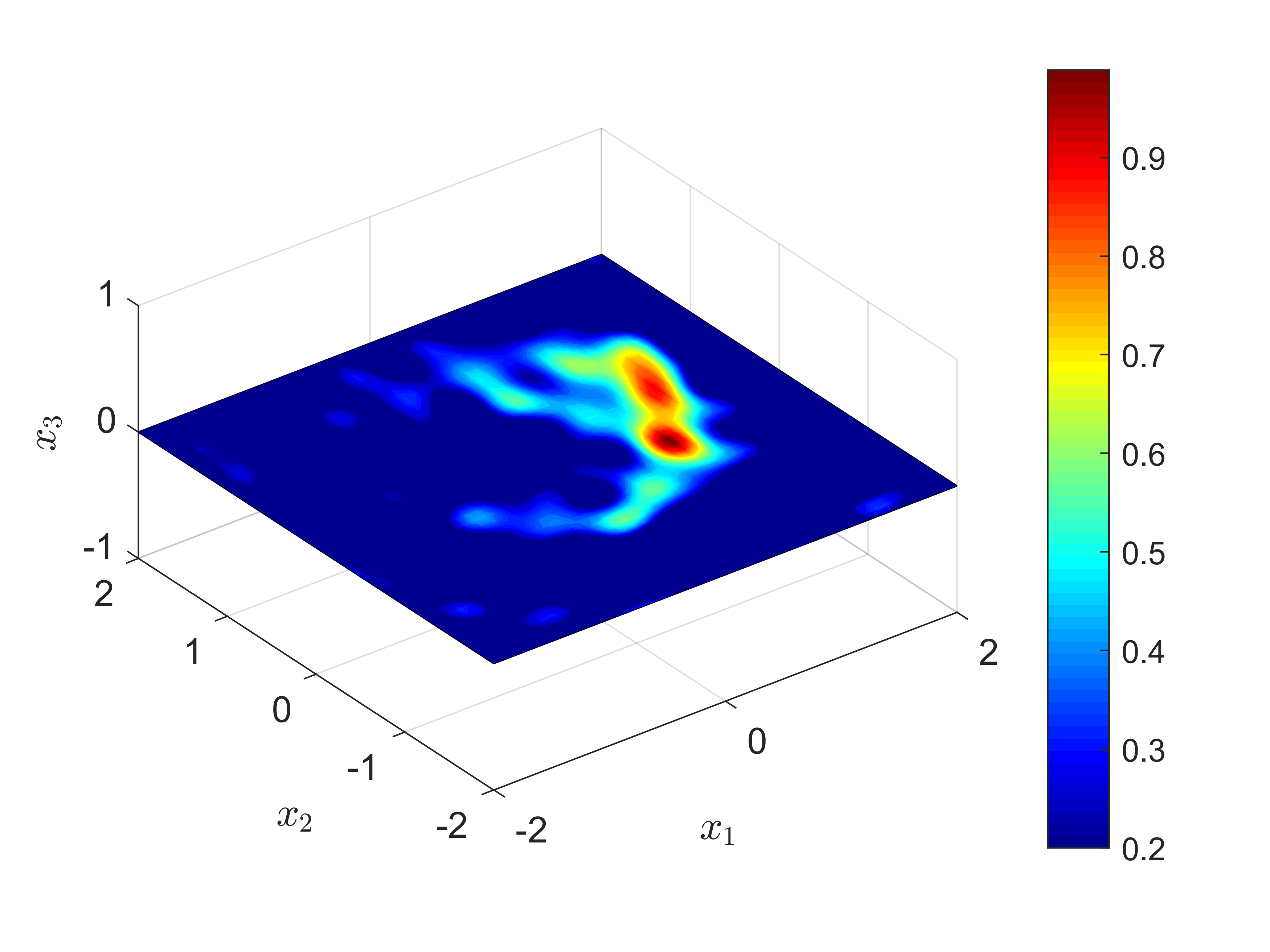} & \includegraphics[width=4.6cm]{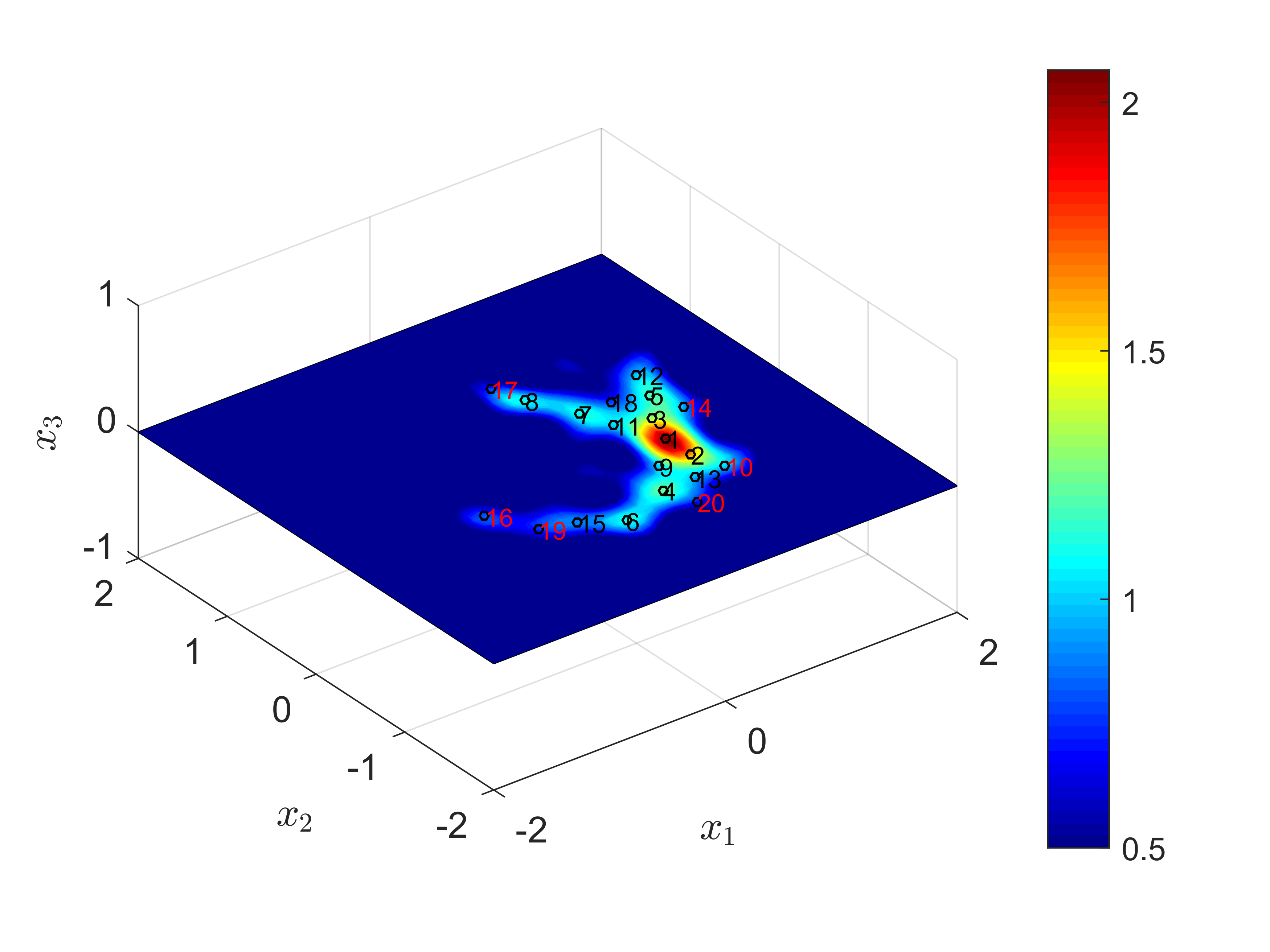} & \includegraphics[width=4.6cm]{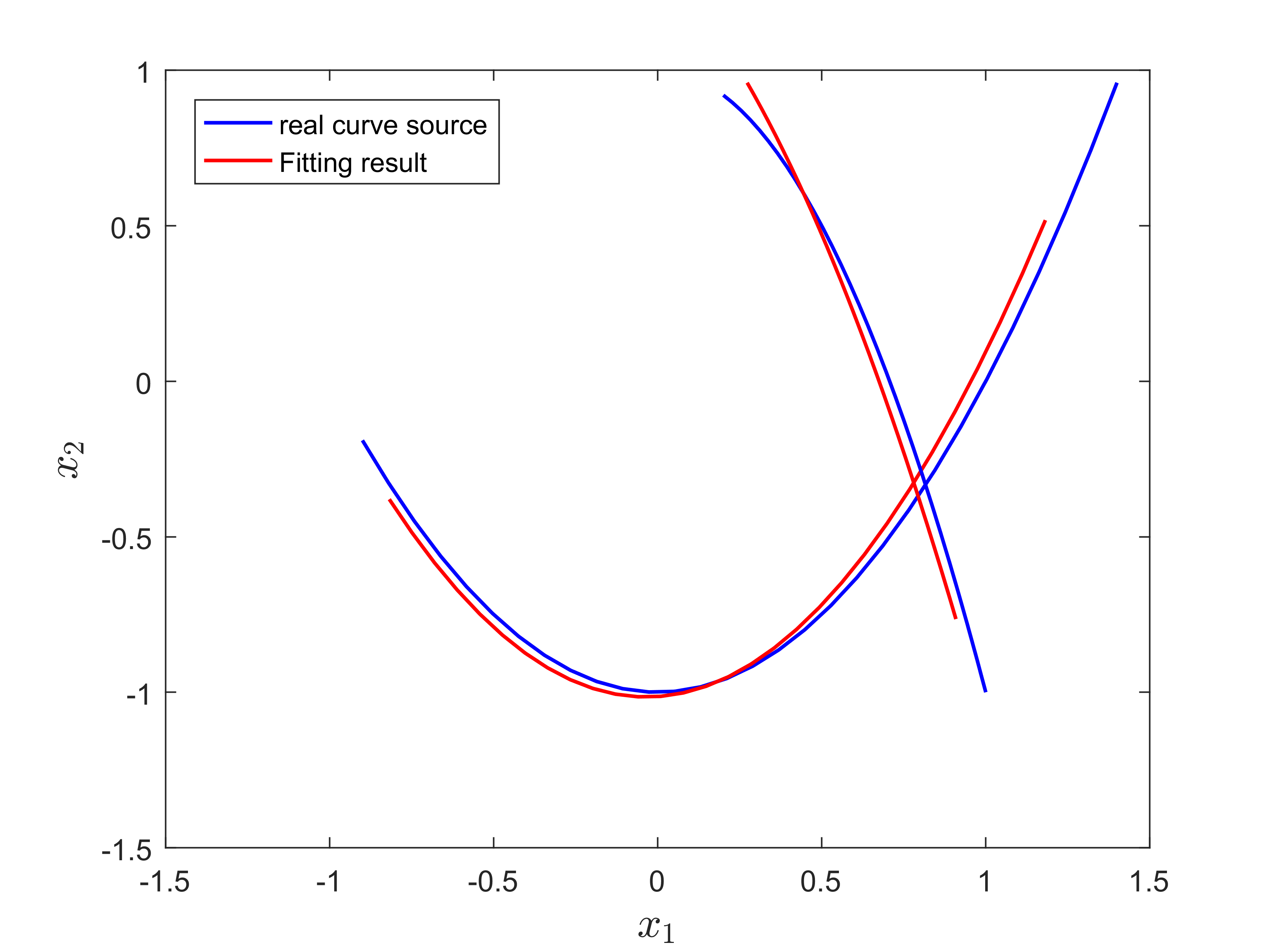} \\
  (d) & (e) & (f)  \\
 \end{tabular}
 \caption{Reconstruction of multiple curve sources, Case 2. (a) Sketch of the example. (b) Reconstruction with all the sensors, $\epsilon=10\%.$ (c) Reconstruction with all the sensors, $\epsilon=5\%.$ (d) Reconstruction with the left half of the sensors, $\epsilon=5\%$. (e) The approximate source points with all the sensors, $\epsilon=5\%$. (f) Polynomial fitting result.}
 \label{xianjiaochatu}
 \vspace{-0.5em}
\end{figure}

\begin{table}
\centering
\caption{Reconstruction of multiple source curves in Example 4, Case 2.}
\footnotesize
\begin{tabular}{@{}llll} 
\br
No. & \tabincell{c}{The actual curve sources} & \tabincell{c}{Reconstruction with all the sensors, $\epsilon=5\%$} \\
\mr
1 & $x_2=x_1^2-1, x_1\in[-0.9, 1.4]$ & $x_2=1.142x_1^2-0.0482x_1-1, x_1\in [-0.8182, 1.1818]$ \\
  & $x_3=0$  & $x_3=0$ \\
2 & $x_2=-2x_1^2+1, x_1\in[0.2, 1]$ & $x_2=-2.152x_1^2-0.01441x_1+1.01, x_1\in[0.2727, 0.9091]$ \\
  & $x_3=0$  & $x_3=0$ \\
\br
\label{xianjiaochab}
\end{tabular}
\end{table}

Case 3. The source curves are selected as the combination of
\begin{equation*}
  L_1:
  \left\{
  \begin{array}{ll}
  x_2=\frac{1}{2}x_1^2-\frac{1}{4}, & \quad x_1\in[-0.9, 1.5],\\
  x_3=0 &
  \end{array}
  \right.
 \end{equation*}
and
 \begin{equation*}
  L_2:
  \left\{
  \begin{array}{ll}
  x_3=\sqrt{x_1},   & \quad x_1\in[0.1, 1.5],\\
  x_2=0. &
  \end{array}
  \right.
 \end{equation*}
The reconstruction is shown in Figure \ref{liangmianxiantu}. See Table \ref{liangmianxianb} for the specific reconstructed data.
\begin{figure}[!ht]
 \center
 \scriptsize
 \begin{tabular}{ccc}
  \includegraphics[width=4.6cm]{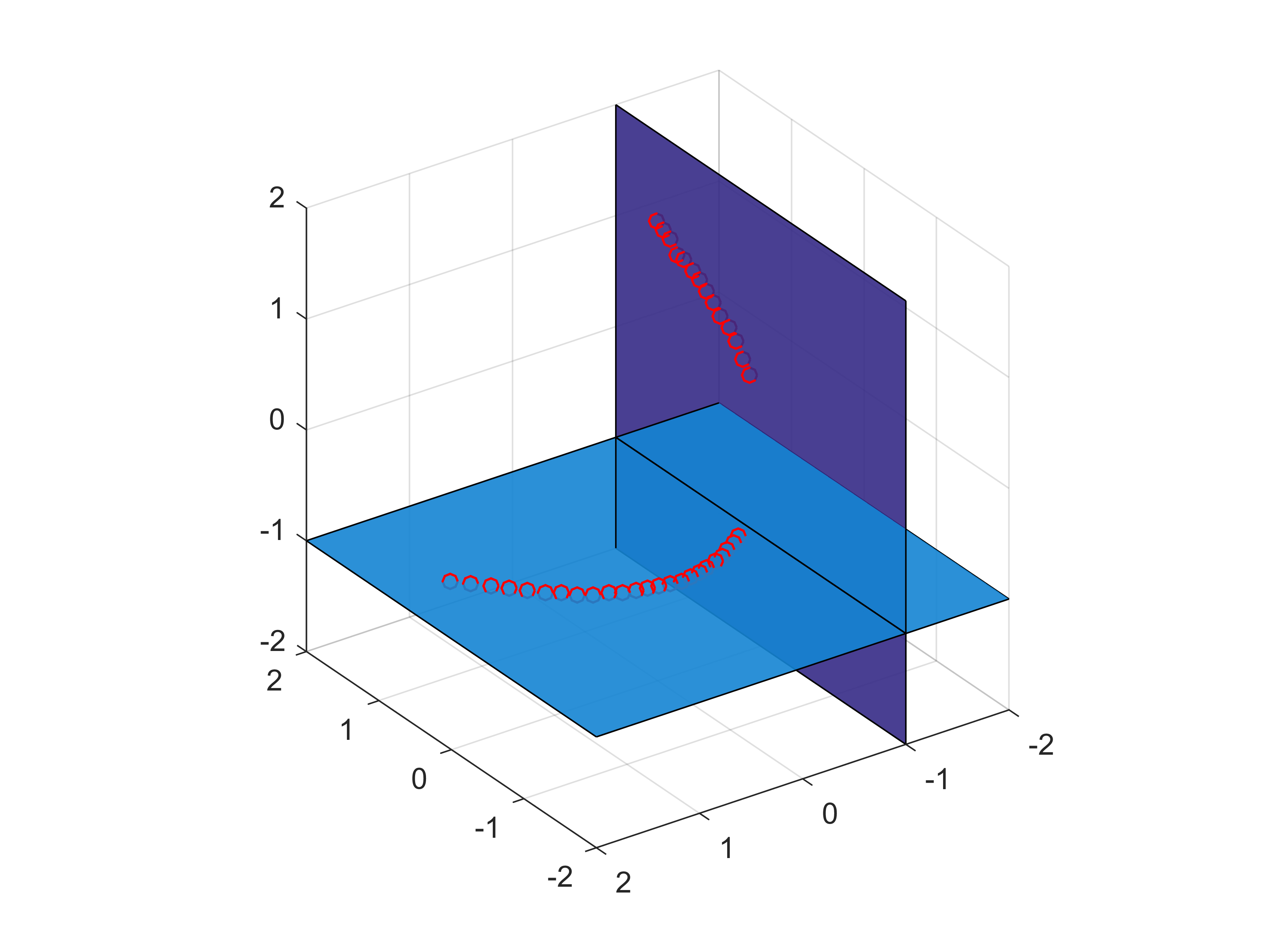} & \includegraphics[width=4.6cm]{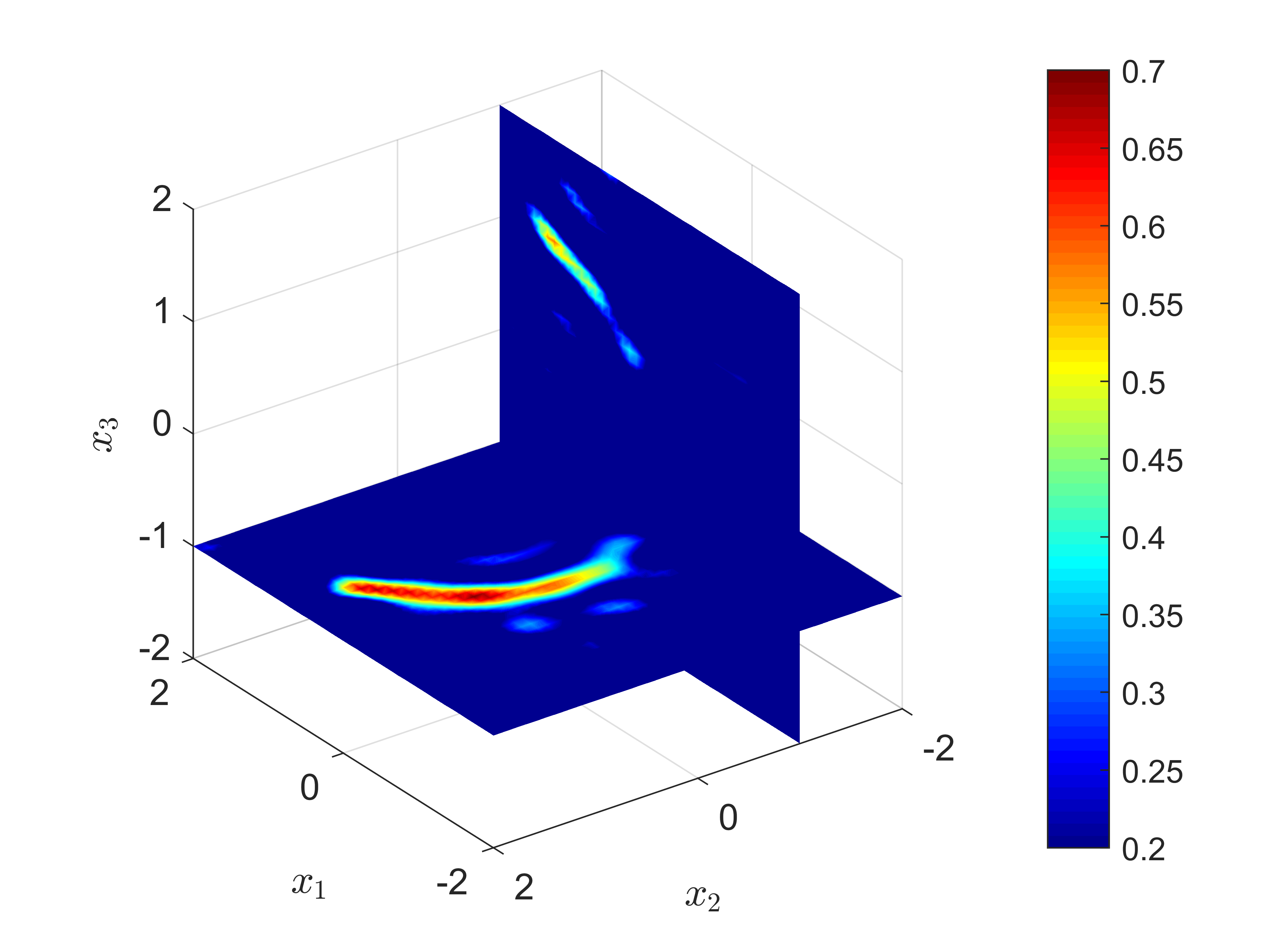} & \includegraphics[width=4.6cm]{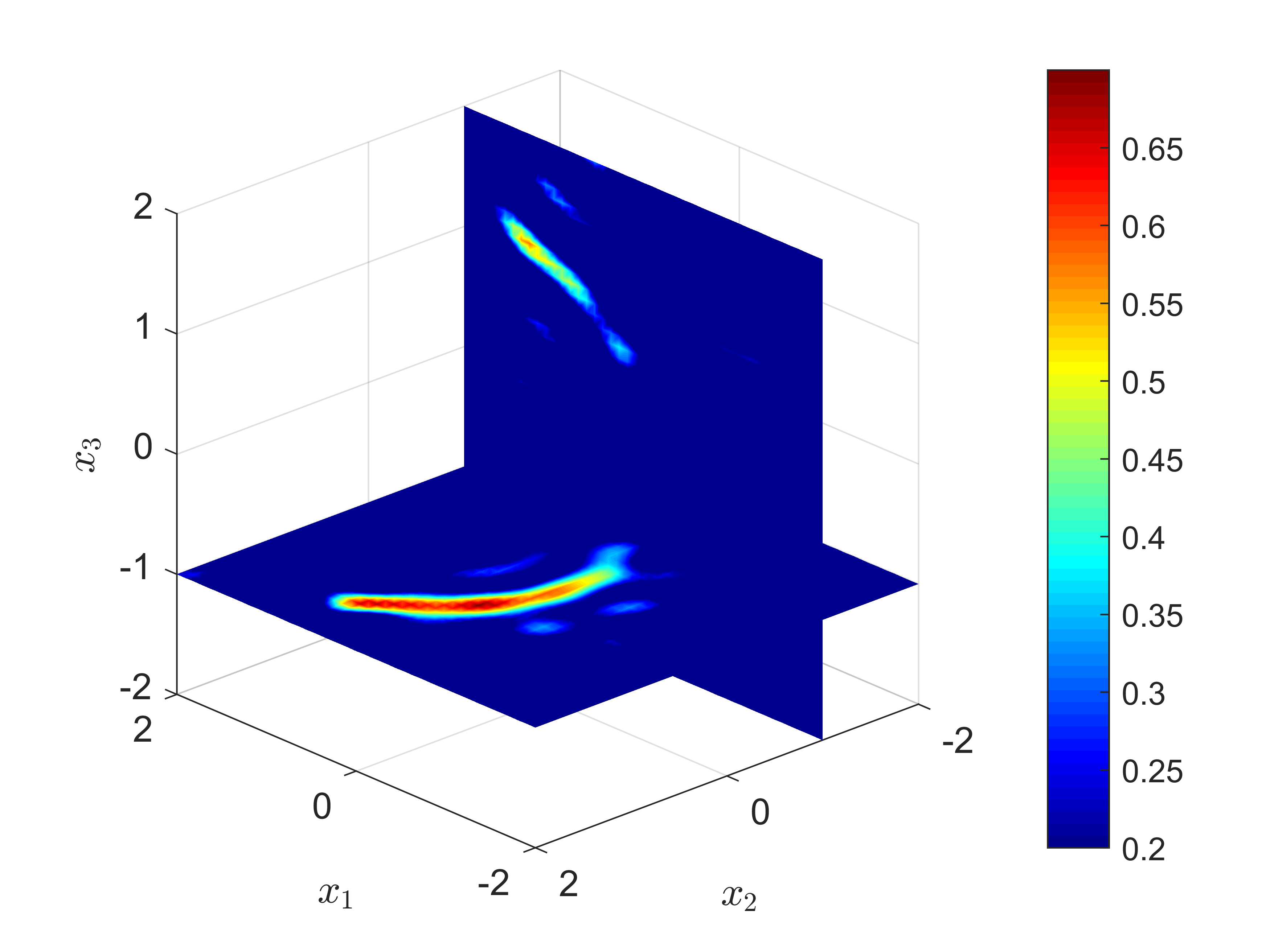} \\
  (a) & (b) & (c) \\
  \includegraphics[width=4.6cm]{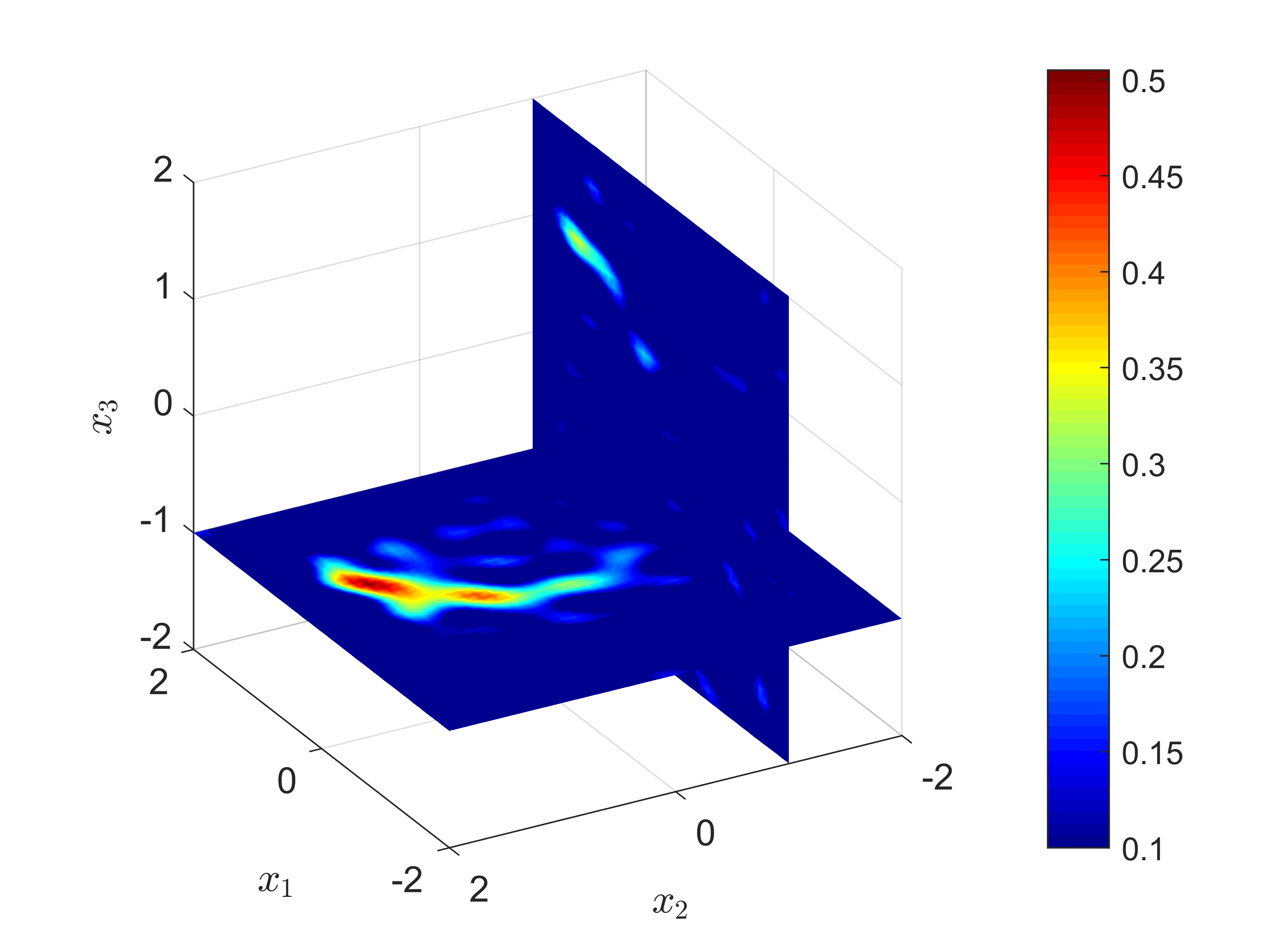} & \includegraphics[width=4.6cm]{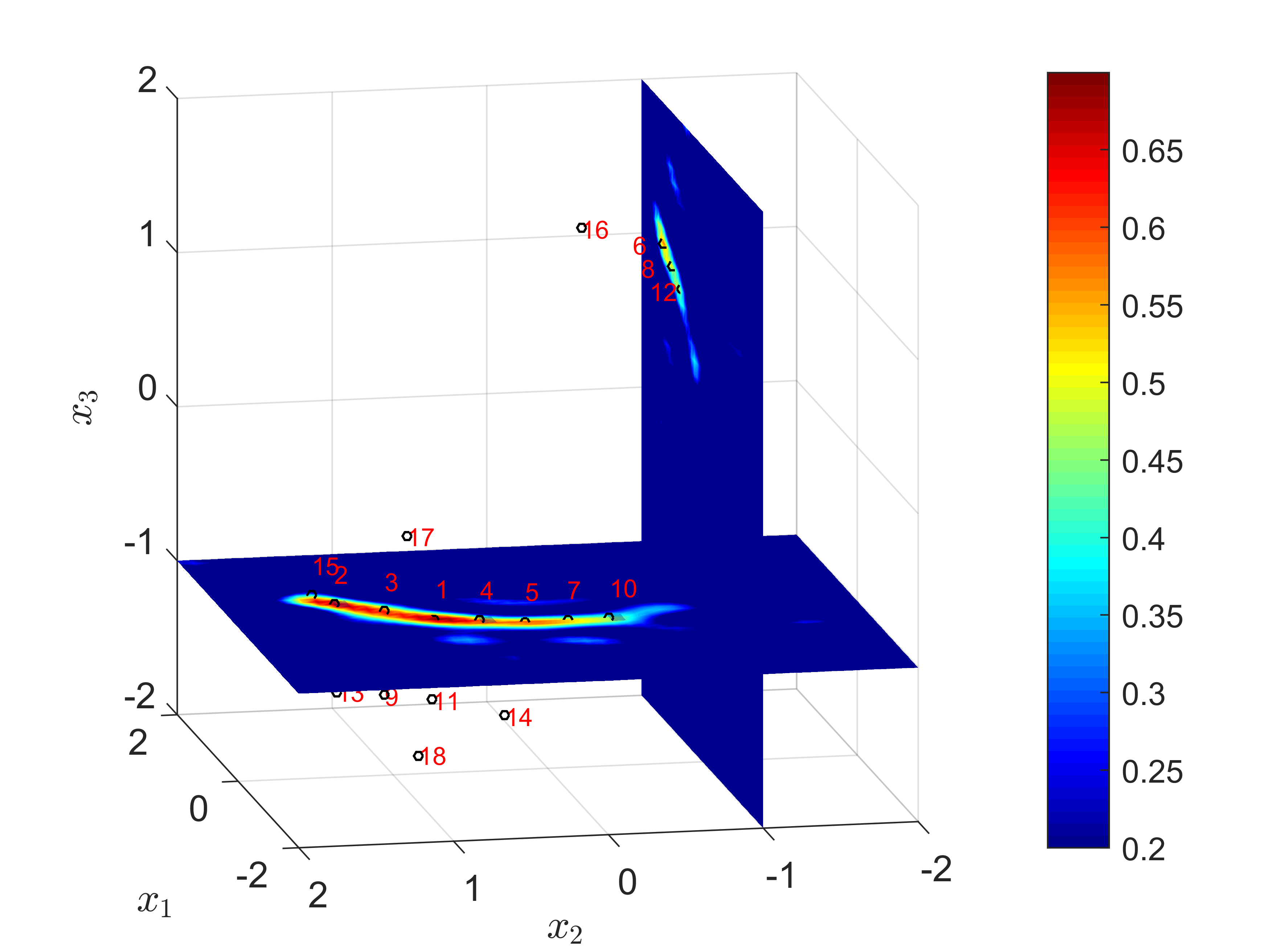} & \includegraphics[width=4.6cm]{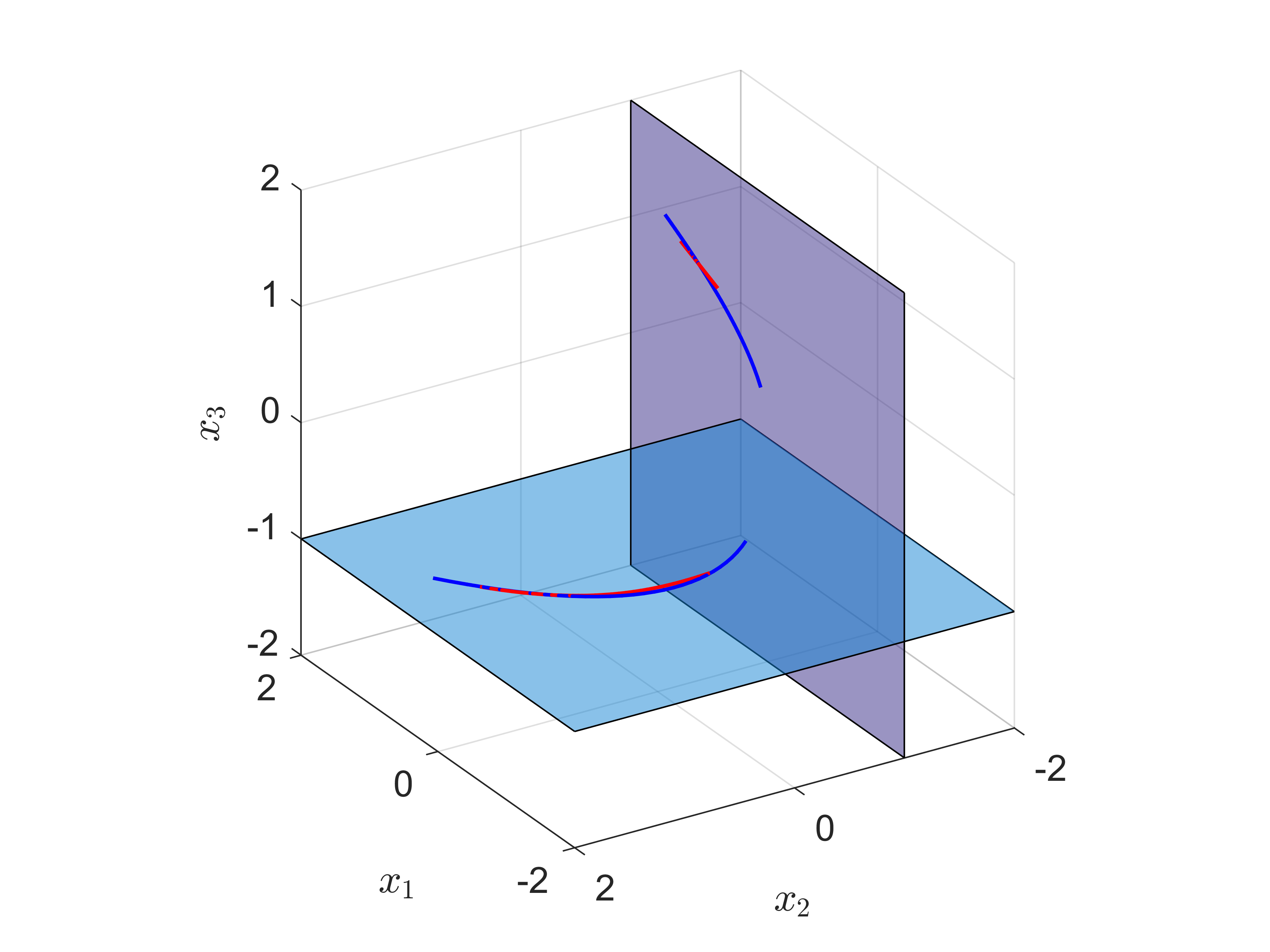} \\
  (d) & (e) & (f)  \\
 \end{tabular}
 \caption{Reconstruction of multiple curve sources, Case 3. (a) Sketch of the example. (b) Reconstruction with all the sensors, $\epsilon=10\%$. (c) Reconstruction with all the sensors, $\epsilon=5\%$. (d) Reconstruction with the left half of the sensors, $\epsilon=5\%$. (e) The approximate source points with all the sensors, $\epsilon=5\%$. (f) Polynomial fitting result.}
 \label{liangmianxiantu}
 \vspace{-0.5em}
\end{figure}

\begin{table}
\centering
\caption{Reconstruction of multiple source curves in Example 4, Case 3.}
\footnotesize
\begin{tabular}{@{}llll} 
\br
No. & \tabincell{c}{The actual curve sources} & \tabincell{c}{Reconstruction with all the sensors, $\epsilon=5\%$} \\
\mr
1 & $x_2=\frac{1}{2}x_1^2-\frac{1}{4}, x_1\in[-0.9, 1.5]$ & $x_2=0.1247x_1^3+0.2665x_1^2+0.04406x_1-0.1907, $ \\
  & $x_3=0$ & $x_1 \in[-0.3636, 1.2727], x_3=0$ \\
2 & $x_3=\sqrt{x_1}, x_1\in[0.1, 1.5]$ & $x_3=0.3333x_1+0.6667, x_1\in[0.7273, 1.2727]$ \\
  & $x_2=0$ & $x_2=0$ \\
\br
\label{liangmianxianb}
\end{tabular}
\end{table}

\noindent\textbf{Example 5.}
This example is to study the simultaneous reconstruction of a curve source and a point source. The source point is located at $(-1, -1, 0)$ with the intensity $1$ and the curve source is chosen as
\begin{equation*}
  L:
  \left\{
  \begin{array}{ll}
  x_2=-2x_1^2+1, & \quad x_1\in[0.2, 1],\\
  x_3=0 &
  \end{array}
  \right.
 \end{equation*}
with relative intensities $\tau(x)=x_1+3.$ The reconstruction is shown in Figure \ref{dianxian}. See Table \ref{dianxianb} for the specific reconstructed data.
\begin{figure}[!ht]
 \center
 \scriptsize
 \begin{tabular}{ccc}
  \includegraphics[width=4.6cm]{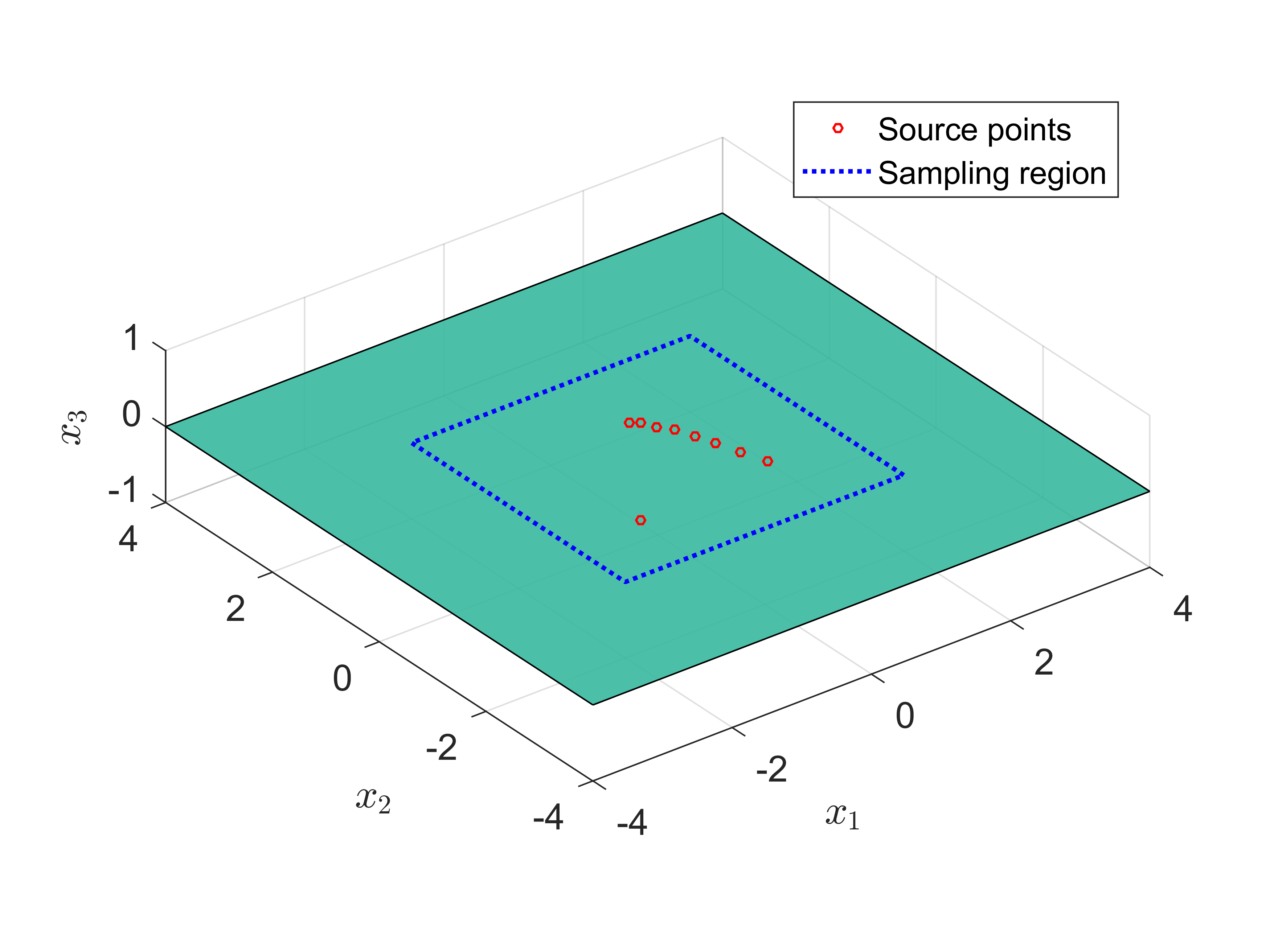} & \includegraphics[width=4.6cm]{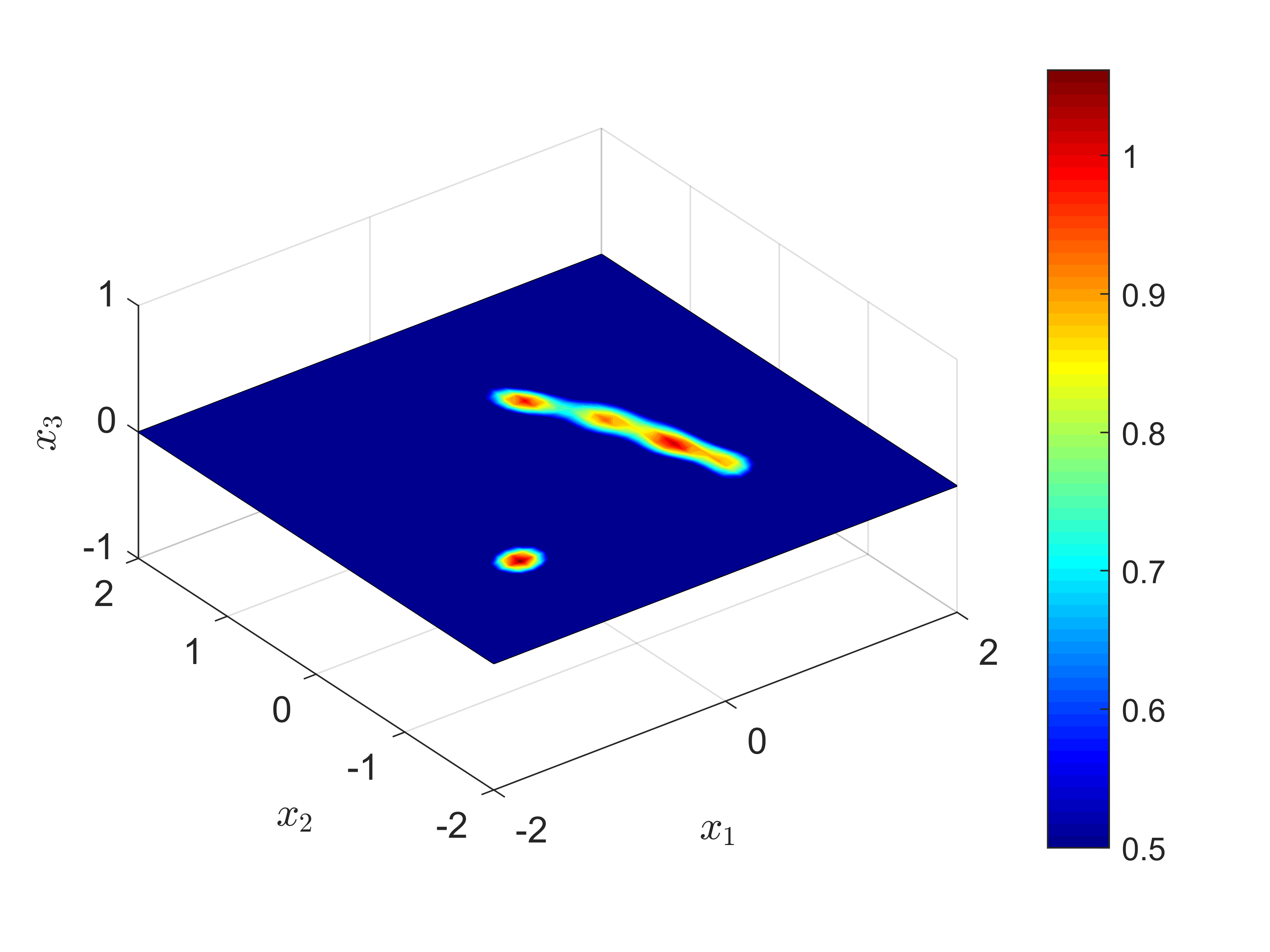} & \includegraphics[width=4.6cm]{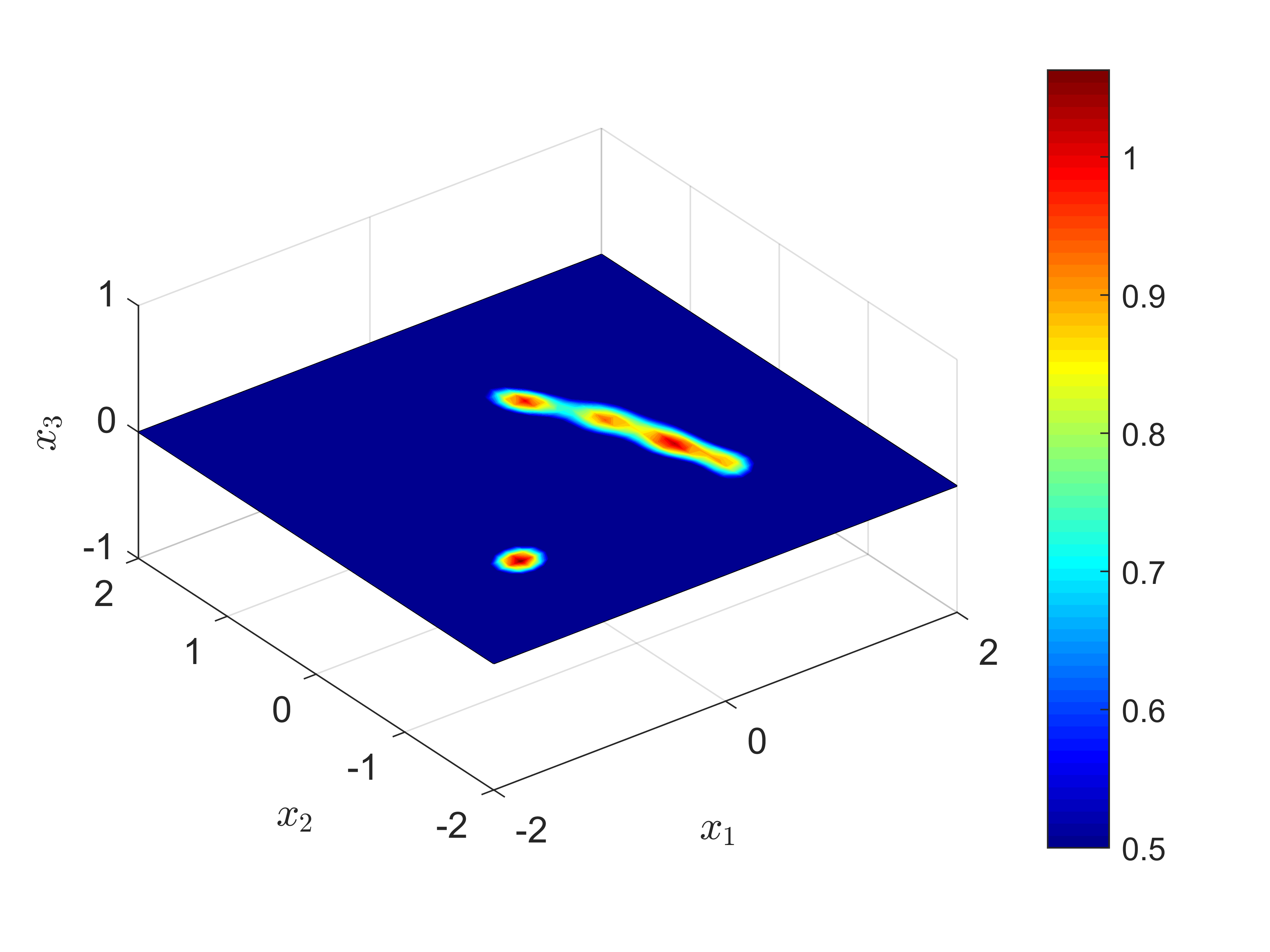}\\
  (a) & (b) & (c)\\
  \includegraphics[width=4.6cm]{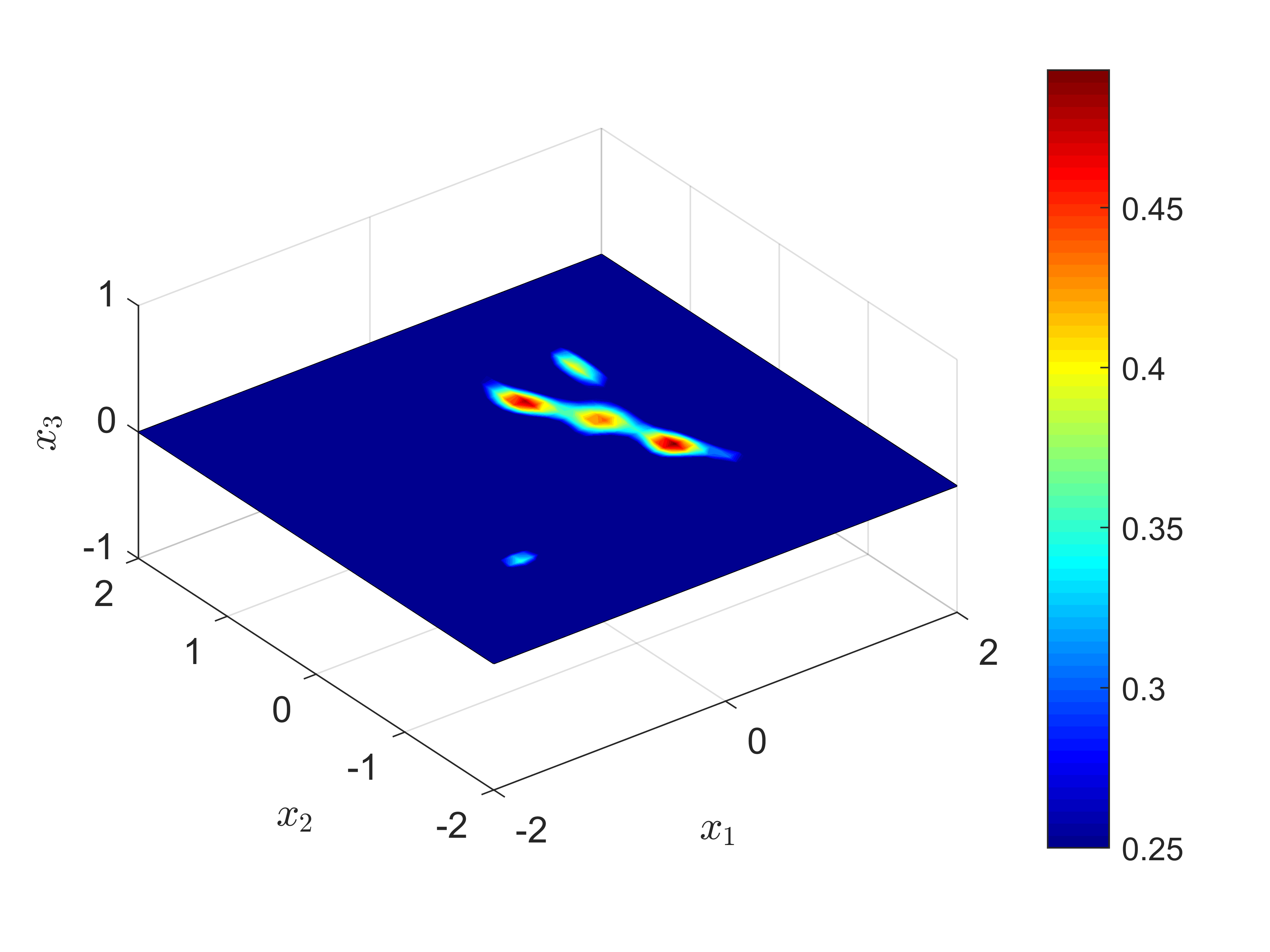} & \includegraphics[width=4.6cm]{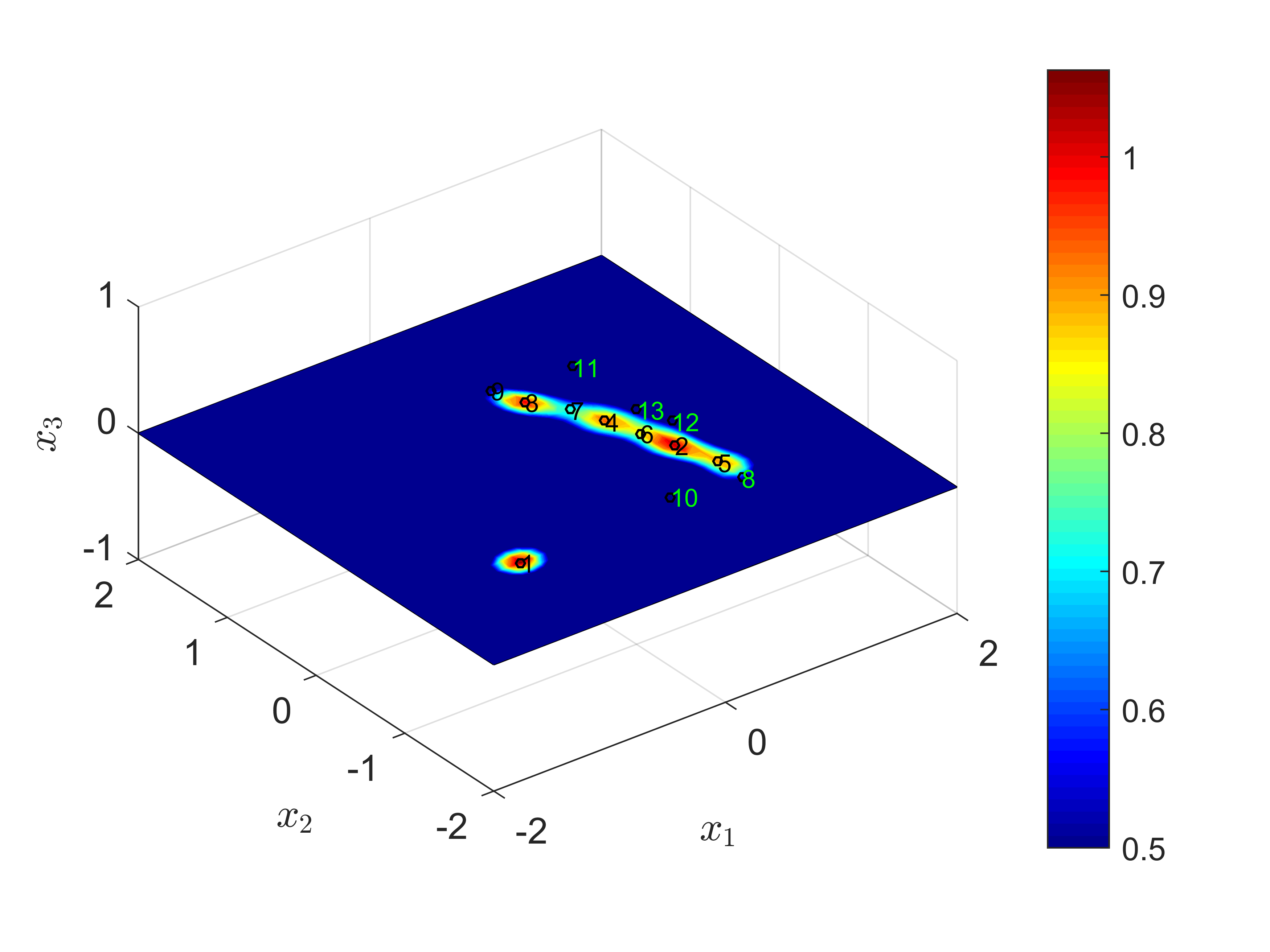} & \includegraphics[width=4.6cm]{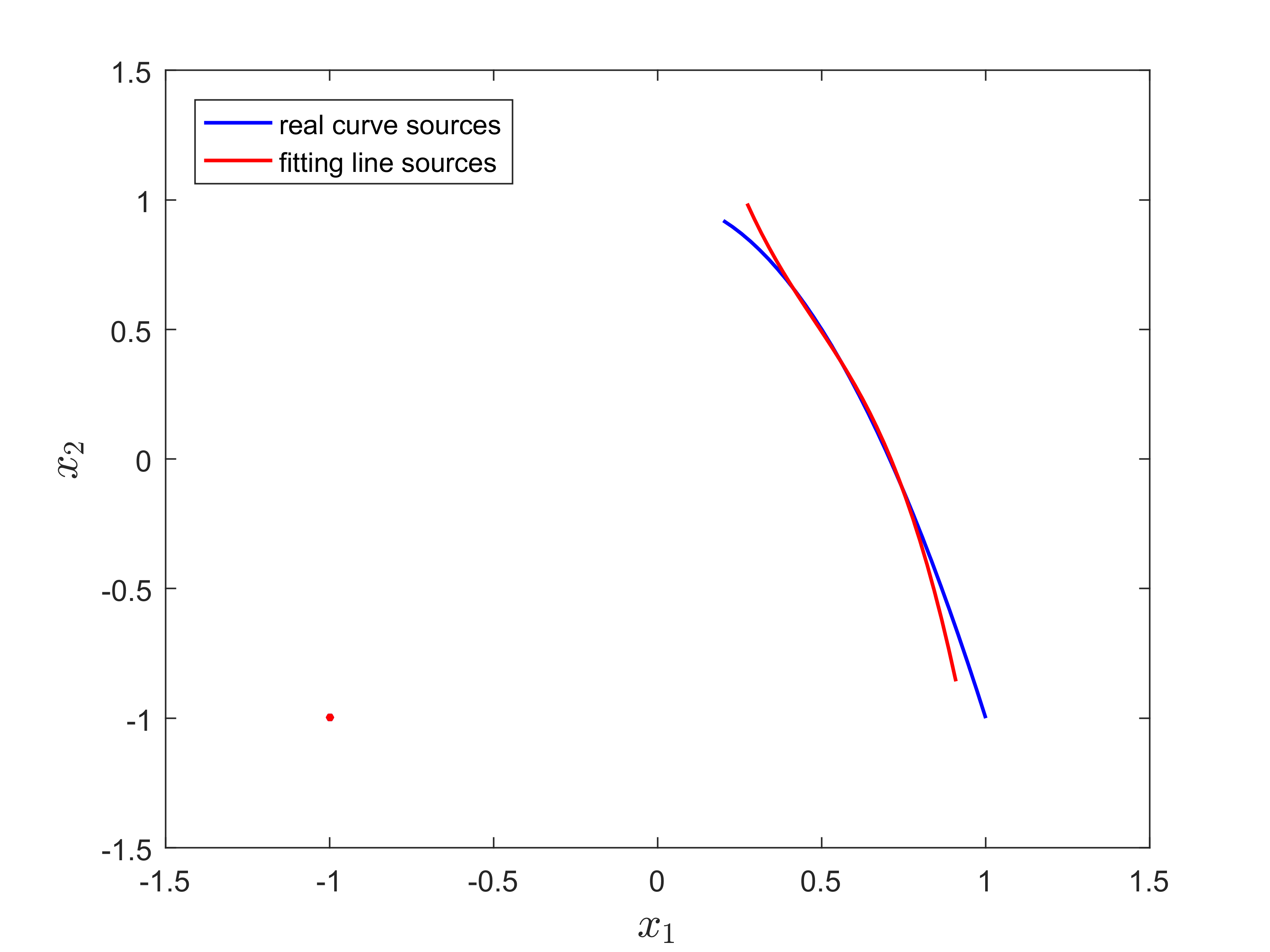} \\
  (d) & (e) & (f)  \\
 \end{tabular}
 \caption{Simultaneous reconstruction of a curve source and a point source. (a) Sketch of the example. (b) Reconstruction with all the sensors, $\epsilon=10\%.$ (c) Reconstruction with all the sensors, $\epsilon=5\%.$ (d) Reconstruction with the left half of the sensors, $\epsilon=5\%$. (e) The approximate source points with all the sensors, $\epsilon=5\%$. (f) Polynomial fitting result.}
 \label{dianxian}
 \vspace{-0.5em}
\end{figure}

\begin{table}
\centering
\caption{Reconstruction of curve source location in example 5.}
\footnotesize
\begin{tabular}{@{}llll} 
\br
No. & \tabincell{c}{The actual curve source} & \tabincell{c}{Reconstruction with all the sensors, $\epsilon=5\%$} \\
\mr
1 & $x_2=-2x_1^2+1,x_1\in[0.2, 1]$ & $x_2=-7.124x_1^3+10.22x_1^2-6.791x_1+2.222,x_1\in[0.2727, 0.9091]$ \\
  & $x_3=0$ & $x_3=0$ \\
2 & $(-1, -1, 0)$ & $(-1, -1, 0)$ \\
\br
\label{dianxianb}
\end{tabular}
\end{table}

\section{Conclusion}
In this paper, we have proposed a simple sampling method with a novel indicator function to reconstruct both multiple point sources and sources on a curve. The effectiveness of the proposed method has been proved by both theoretical analysis and numerical experiments.

\vspace{1em}
\noindent\textbf{Acknowledgments}\\
The work of Bo Chen was supported by the National Natural Science Foundation of China (No. 12101603), the Fundamental Research Funds for the Central Universities (Special Project for Civil Aviation University of China, No. 3122021072) and the supplementary project of Civil Aviation University of China (No. 3122022PT19).

\vspace{1em}
\noindent\textbf{ORCID iDs}\\
Bo Chen: https://orcid.org/0000-0003-1708-9542\\
Yao Sun: https://orcid.org/0000-0002-9165-1735

\vspace{1em}
\noindent\textbf{References}

\bibliographystyle{plain}
\bibliography{cankaowenxian}

\end{document}